	\newcommand{\dummylabel}[2]{\def\@currentlabel{#2}\label{#1}}
\renewcommand{\leq}{\leqslant}
\renewcommand{\epsilon}{\varepsilon}
\newcommand{\Opens}{\mathop{\mathcal{O}\!}}
\newcommand{\sqleq}{\sqsubseteq} 
\newcommand{\up}{\mathop{\uparrow}\mspace{-2mu}}
\newcommand{\down}{\mathop{\downarrow}\mspace{-2mu}}
\newcommand{\Up}{\mathop{\text{\scalebox{1.3}[1.2]{\rotatebox[origin=c]{90}{$\rightarrowtriangle$}}}}\!}
\newcommand{\Down}{\mathop{\text{\scalebox{1.3}[1.2]{\rotatebox[origin=c]{-90}{$\rightarrowtriangle$}}}}\!}
\newcommand{\interior}{^{\circ}}
\DeclareMathOperator{\Clopup}{Clop^{\up}}
\newcommand{\cat}[1]{\mathbf{#1}}
\newcommand{\Top}{\cat{Top}}
\newcommand{\Frame}{\cat{Frm}}
\newcommand{\Loc}{\cat{Loc}}
\newcommand{\Ord}{\cat{Ord}}
\newcommand{\OrdLoc}{\cat{OrdLoc}}
\newcommand{\OrdTop}{\cat{OrdTop}}
\newcommand{\OC}{\mathrm{OC}}
\newcommand{\Rel}{\cat{Rel}}
\newcommand{\Heyt}{\cat{Heyt}}
\newcommand{\coHeyt}{\cat{coHeyt}}
\newcommand{\biHeyt}{\cat{biHeyt}}
\newcommand{\Esakia}{\cat{Esak}}
\newcommand{\coEsakia}{\cat{coEsak}}
\newcommand{\biEsakia}{\cat{biEsak}}
\DeclareMathOperator{\pt}{pt}
\DeclareMathOperator{\pf}{pf}
\newcommand{\op}{^{\mathrm{op}}}
\newcommand{\id}[1][]{\mathrm{id}_{#1}}
\newcommand{\Leq}{\mathrel{\trianglelefteqslant}}
\newcommand{\LeqU}{\mathrel{\trianglelefteqslant_\mathrm{U}}}
\newcommand{\LeqL}{\mathrel{\trianglelefteqslant_\mathrm{L}}}
\newcommand{\LeqEM}{\mathrel{\trianglelefteqslant_\mathrm{EM}}}
\newcommand{\leqU}{\leq_\mathrm{U}}
\newcommand{\leqL}{\leq_\mathrm{L}}
\theoremstyle{plain}
\newtheorem{theorem}{Theorem}[section]   
\newtheorem*{theorem*}{Theorem}
\newtheorem{proposition}[theorem]{Proposition}
\newtheorem{corollary}[theorem]{Corollary}
\newtheorem{lemma}[theorem]{Lemma}
\theoremstyle{definition}
\newtheorem{definition}[theorem]{Definition}
\newtheorem{example}[theorem]{Example}
\tikzstyle{none}=[inner sep=0mm]
\tikzstyle{tikzfig}=[baseline=-0.25em,scale=0.5]
\tikzstyle{RRed_text}=[text={rgb,255: red,190; green,49; blue,26}]
\tikzstyle{dot_point}=[fill=black, draw=none, shape=circle, inner sep=1pt]
\tikzstyle{thick_dotted}=[-, dotted, thick]
\tikzstyle{thin_dotted}=[-, dotted]
\tikzstyle{green_open}=[-, fill={rgb,255: red,225; green,255; blue,235}, very thick]
\tikzstyle{green_cone}=[-, draw=none, fill={rgb,255: red,176; green,255; blue,176}, fill opacity=.5]
\tikzstyle{blue_cone}=[-, draw=none, fill={rgb,255: red,205; green,249; blue,255}, fill opacity=.34]
\tikzstyle{blue_open}=[-, fill={rgb,255: red,239; green,255; blue,255}, very thick]
\tikzstyle{yellow_opencone}=[fill={rgb,255: red,255; green,255; blue,220}, draw=none]
\tikzstyle{yellow_open}=[-, fill={rgb,255: red,255; green,255; blue,220}, fill opacity=1, very thick]
\tikzstyle{yellow_cone}=[-, draw=none, fill={rgb,255: red,255; green,244; blue,185}, fill opacity=.5]
\tikzstyle{yellow_cone_light}=[-, draw=none, fill={rgb,255: red,255; green,244; blue,185}, fill opacity=.2]
\tikzstyle{red_open}=[-, fill={rgb,255: red,255; green,228; blue,228}, very thick]
\tikzstyle{red_cone}=[-, fill={rgb,255: red,255; green,230; blue,230}, fill opacity=.5, draw=none]
\tikzstyle{red_dashed}=[-, fill={rgb,255: red,255; green,228; blue,228}, very thick, draw={rgb,255: red,71; green,26; blue,26}, dash pattern={{on 10pt off 2pt on 1pt off 2pt}}]
\tikzstyle{grey_open}=[-, fill={rgb,255: red,240; green,240; blue,240}, very thick]
\tikzstyle{grey_cone}=[-, draw=none, fill={rgb,255: red,218; green,218; blue,218}, fill opacity=.5]
\tikzstyle{real_line}=[-, ultra thick]
\tikzstyle{real_line_blue}=[ultra thick, draw={rgb,255: red,135; green,197; blue,255}, {|-|}]
\tikzstyle{real_line_red}=[ultra thick, draw={rgb,255: red,191; green,0; blue,3}, {|-|}]
\tikzstyle{real_line_yellow}=[ultra thick, draw={rgb,255: red,217; green,145; blue,0}, {|-|}]
\tikzstyle{real_line_grey}=[ultra thick, draw={rgb,255: red,171; green,171; blue,171}, {|-|}]
\tikzstyle{preimage_frames}=[->]
\newcommand{{\tikzstyle{every picture}=[tikzfig]\input{.tikz}}}[1]{{\tikzstyle{every picture}=[tikzfig]\input{#1.tikz}}}
\title{Ordered Locales}
\author{Chris Heunen}
\address{University of Edinburgh}
\email{chris.heunen@ed.ac.uk}
\author{Nesta van der Schaaf}
\address{University of Edinburgh}
\email{n.schaaf@ed.ac.uk}
\date{\today}\thanks{We thank Prakash Panangaden for useful discussions, and Mai Gehrke, Sam van Gool, Robin Kaarsgaard, Luca Reggio, and Morgan Rogers for helpful suggestions.}
\begin{document}

\begin{abstract}
  We extend the Stone duality between topological spaces and locales to include order: there is an adjunction between the category of preordered topological spaces satisfying the so-called \emph{open cone} condition, and the newly defined category of \emph{ordered locales}. The adjunction restricts to an equivalence of categories between spatial ordered locales and sober $T_0$-ordered spaces with open cones.
\end{abstract}

\maketitle

\section{Introduction}

There are many reasons why one might want to equip (the underlying set of) a topological space with a preorder. 
In topology itself, ordered spaces were first used to study \emph{fixed point theorems}~\cite{ward:orderedspaces} and \emph{compactifications}~\cite{nachbin}.
But they also give \emph{algebraic} means to study topological spaces~\cite{priestley:lattices}, providing models for \emph{(modal) logic}.
This is especially useful in \emph{computability-} and \emph{domain theory}~\cite{gierzetal:domains}, where the order indicates levels of knowledge about an ongoing computation.
Similarly, ordered spaces are used in the study of \emph{concurrent computing}~\cite{haucourt2009ComparingTopologicalModels,fajstrup2016DirectedAlgebraicTopology}, where topological invariants can recognise phenomena such as deadlock.
A related use of ordered spaces is in \emph{directed algebraic topology}~\cite{grandis2009DirectedAlgebraicTopology}, an abstract setting for homotopy theory where paths need not be reversible.
Finally, ordered spaces have been used in the physics of \emph{spacetime}~\cite{kronheimer1967structure} and \emph{quantum gravity}~\cite{sorkinetal:causalset}, where the order models whether there can be a \emph{causal} influence of one point on another~\cite{panangaden2014CausalityPhysicsComputation}.

There are also many reasons why one might want to work with \emph{locales} instead of topological spaces. 
Working with the family of open sets while disregarding any possible points leads to \emph{better behaved} spaces~\cite{johnstone1982StoneSpaces}, which is intimately related to \emph{constructive} proofs~\cite{johnstone:point}. 
It allows considering rigorously the topological intuition inherent to \emph{spaces without points}~\cite{picado2012frames} that occur naturally throughout mathematics.
This is most pronounced in \emph{topos theory}~\cite{johnstone2002elephant}; pragmatically, any theorem proved locale-theoretically automatically also applies to for examples \emph{sheaves}~\cite{maclane1994sheaves}. 
Similarly, \emph{synthetic topology}~\cite{escardo:synthetic} can extract computational algorithms from locale-theoretic continuity, and locale-theoretic methods make \emph{logical} methods available to reason about topology~\cite{vickers1996topology}.
Finally, there are unresolved foundational discussions about the nature of physical spacetime, which may not be continuous~\cite{sorkinetal:causalset} or may not have an empirically accessible notion of point \cite{forrest1996OntologyTopologyTheory}.

\emph{Stone duality} famously links Boolean algebras and \emph{Stone spaces,} a special type of topological space \cite{stone1936TheoryRepresentationBoolean}. Since the publication of that seminal paper, many related and generalised results have been published. Most relevant for us is the Stone duality between topological spaces and locales, giving an adjunction between the two categories~\cite{johnstone1982StoneSpaces}: given a topological space, its partially ordered family of open sets forms a locale, and conversely, given a locale, one can consider the topological space of its points, suitably defined.
In this article, we extend this duality to take preorders into account. 
To what structure on its locale does a preorder on a topological space correspond?

Our answer is an axiomatisation of the appropriate notion of \emph{ordered locale}. 
We lift the preorder $x \leq y$ on points of a topological spaces to a preorder $U \Leq V$ on the open subsets of the topological space (to be precise, the lower/upper/Egli-Milner preorder), and axiomatise the latter.
For a working theory, we restrict to preorders on topological spaces satisfying the \emph{open cone} condition: the up- and downsets of an open set are again open. 
We establish an adjunction between the following two categories: 
preordered topological spaces with open cones and enough points, and continuous monotone functions; 
and ordered locales and monotone locale morphisms.
The adjunction restricts to an equivalence between the full subcategories of sober $T_0$-ordered spaces with open cones and spatial ordered locales.
This could also be regarded as a natural variation on the study of Boolean algebras with additional operations~\cite{jonssontarski}. 

It should be emphasised that we study ordered locales from the perspective of causality in a point-free setting, rather than from the perspective of duality theory. There, the motivation is often to find a topological representation of some algebraic  or lattice-theoretic structure:
\begin{itemize}
	\item \emph{Stone duality:} Boolean algebras as Stone spaces \cite{stone1936TheoryRepresentationBoolean};
	\item \emph{Priestley duality:} bounded distributive lattices as Priestley spaces \cite{priestley:lattices};
	\item \emph{Esakia duality:} Heyting algebras as Esakia spaces \cite{esakia1974topologicalKripkemodels}.
\end{itemize}
Our motivation, however, is in generalising the notion of ordered topological space into the point-free realm. Our adjunction is a means to justify the definition of an ordered locale. That said, we do exhibit a relationship between our result and \emph{Esakia duality}.

The outline of the article is as follows. \Cref{section:ordered topological spaces} starts by discussing ordered spaces, and \Cref{section:ordered locales} introduces ordered locales and their morphisms. Then, \Cref{sec:opens} studies the functor that takes opens, turning an ordered space into an ordered locale. Vice versa, \Cref{sec:points} studies the functor that takes points, turning an ordered locale into an ordered space. Putting it all together, \Cref{sec:adjunction} proves the adjunction, and investigates its fixed points to establish Stone duality in the ordered setting. \Cref{section:upper-lower adjunctions,section:esakia duality} discuss slightly generalised results and relations to Esakia duality. Finally, \Cref{sec:conclusion} concludes by raising directions for further research.

\section{Ordered spaces}\label{section:ordered topological spaces}

We start with basic properties of preordered sets.
A \emph{preorder}, which we also simply call \emph{order}, is a binary relation on a set that is reflexive ($x \leq x$) and transitive ($x \leq y \leq z \implies x \leq z$).
A function $g \colon S \to T$ between preordered sets is \emph{monotone} if $x \leq y$ implies $g(x) \leq g(y)$.
Preordered sets and monotone functions form a category, $\Ord$.
If $(S,\leq)$ is a preordered set, we write
\[
  \up A = \{ y \in S \mid \exists x \in A \colon x \leq y \}
  \qquad
  \down A = \{ x \in S \mid \exists y \in A \colon x \leq y \}
\]
for the \emph{up-} and \emph{downset} of a subset $A \subseteq S$. Thinking of $\leq$ as a causality relation, we may also call these \emph{future-} and \emph{past cones}.

\begin{lemma}\label{lem:cones}
  If $(S, \leq)$ is a preordered set, then:
	\begin{enumerate}
		\item[(a)] $A\subseteq \up A$ and $A \subseteq \down A$ for any subset $A$;
		\item[(b)] $\up A = \up \up A$ and $\down A = \down \down A$ for any subset $A$;
		\item[(c)] $\up A \subseteq \up B$ and $\down A \subseteq \down B$ for any subsets $A \subseteq B$;
		\item[(d)] $\bigcup \up A_i = \up \Big(\bigcup A_i\Big)$ and $\bigcup \down A_i = \down \Big(\bigcup A_i\Big)$ for any family $A_i$ of subsets.
	\end{enumerate}
\end{lemma}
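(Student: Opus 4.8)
The plan is to prove each of the four items directly from the definitions of $\up$ and $\down$, treating only the upset cases in detail since the downset cases are entirely symmetric (replacing "$\exists x \in A \colon x \leq y$" by "$\exists y \in A \colon x \leq y$" throughout). Each statement unpacks to an elementary first-order argument about the relation $\leq$, so the proofs are short chases through the membership conditions. I do not expect any serious obstacle; the main thing to get right is to invoke reflexivity and transitivity at exactly the right moments.

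First I would prove (a): for any $x \in A$, reflexivity gives $x \leq x$, so $x$ itself witnesses $x \in \up A$, hence $A \subseteq \up A$. Next, for (c), suppose $A \subseteq B$ and take $y \in \up A$, so there is some $x \in A$ with $x \leq y$; since $x \in B$ as well, the same $x$ witnesses $y \in \up B$. I would then do (d) before (b), since the double-cone identity of (b) follows cleanly from idempotence-style reasoning but benefits from having the other parts available. For (d), I would prove the two inclusions separately. If $y \in \bigcup_i \up A_i$, then $y \in \up A_j$ for some index $j$, so there is $x \in A_j \subseteq \bigcup_i A_i$ with $x \leq y$, giving $y \in \up(\bigcup_i A_i)$. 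Conversely, if $y \in \up(\bigcup_i A_i)$, then some $x \in \bigcup_i A_i$ satisfies $x \leq y$; this $x$ lies in some $A_j$, so $y \in \up A_j \subseteq \bigcup_i \up A_i$.

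Finally, for (b) I would argue $\up A = \up \up A$ by two inclusions. The inclusion $\up A \subseteq \up \up A$ is immediate from (a) applied to the set $\up A$. For the reverse inclusion, take $z \in \up \up A$: there is some $y \in \up A$ with $y \leq z$, and in turn some $x \in A$ with $x \leq y$; transitivity yields $x \leq z$, so $z \in \up A$. This is the one place where transitivity is essential, and it is the closest the lemma comes to having a "key step" — though it remains routine. The downset versions of all four parts follow by the symmetric argument, so I would simply remark on this duality rather than repeat the computations.
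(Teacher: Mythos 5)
Your proposal is correct and follows essentially the same elementary route as the paper: reflexivity for (a), transitivity for (b), the definition for (c), and a two-inclusion element chase for (d). The only cosmetic difference is that the paper derives one inclusion of (d) from monotonicity (c) after disposing of the empty index set, whereas you chase elements directly in both directions, which handles that case automatically.
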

\begin{proof}
  The first property follows from reflexivity, the second from transitivity, and the third holds by construction.	For the fourth property, note that $\up \varnothing = \varnothing = \down \varnothing$, so we may assume the indexing set is not empty. 
  Then $A_j \subseteq \bigcup A_i$ for any index $j$, and so $\bigcup \up(A_i) \subseteq \up\big(\bigcup A_i \big)$ by the third property. For the converse inclusion, pick $y \in \up \big( \bigcup A_i \big)$. Then there exist an index $j$ and an element $x \in A_j$ such that $x \leq y$. But this just means that $y \in \up A_j$, and by extension $y \in \bigcup \up A_i$. The argument for downsets is analogous.
\end{proof}

Next we consider how inverse images of monotone functions interact with cones.

\begin{proposition}\label{proposition:monotone iff commutes with cones}
  For a function $g \colon S \to T$ between preordered sets, the following are equivalent:
	\begin{enumerate}
		\item[(a)] $g$ is monotone;
		\item[(b)] $\up g^{-1}(B) \subseteq g^{-1}(\up B)$ for all $B \subseteq T$;
		\item[(c)] $\down g^{-1}(B) \subseteq g^{-1}(\down B)$ for all $B \subseteq T$.
	\end{enumerate}
\end{proposition}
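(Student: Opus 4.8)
The plan is to prove the cyclic chain of implications (a)$\Rightarrow$(b)$\Rightarrow$(a) and (a)$\Rightarrow$(c)$\Rightarrow$(a), exploiting the evident symmetry between the up- and downset statements. Since (b) and (c) are mirror images of each other under reversal of the order, it suffices to carefully establish the equivalence of (a) and (b), and then (a) and (c) will follow by an identical argument with $\up$ and $\down$ interchanged (equivalently, by passing to the opposite preorders on $S$ and $T$, under which monotonicity of $g$ is preserved).

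First I would prove (a)$\Rightarrow$(b). Assume $g$ is monotone and fix $B \subseteq T$. To show $\up g^{-1}(B) \subseteq g^{-1}(\up B)$, take any $y \in \up g^{-1}(B)$. By definition of the upset there is some $x \in g^{-1}(B)$ with $x \leq y$, so $g(x) \in B$. Monotonicity gives $g(x) \leq g(y)$, and since $g(x) \in B$ this witnesses $g(y) \in \up B$, i.e.\ $y \in g^{-1}(\up B)$, as required.

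Next I would prove the converse (b)$\Rightarrow$(a). Assume the inclusion holds for every $B$, and suppose $x \leq y$ in $S$. The natural choice is to instantiate $B$ at the principal upset's source: take $B = \up\{g(x)\}$. Then $g(x) \in B$ by reflexivity (\Cref{lem:cones}(a)), so $x \in g^{-1}(B)$, whence $y \in \up g^{-1}(B) \subseteq g^{-1}(\up B)$ by hypothesis. Thus $g(y) \in \up B = \up\up\{g(x)\} = \up\{g(x)\}$, using idempotence of $\up$ from \Cref{lem:cones}(b). This says precisely that $g(x) \leq g(y)$, establishing monotonicity.

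The argument is essentially routine, so there is no serious obstacle; the only point requiring a little care is the choice of test set $B$ in the converse direction. The cleanest choice is the principal upset $B = \up\{g(x)\}$, which makes the membership $x \in g^{-1}(B)$ immediate from reflexivity and lets the hypothesis deliver $g(x) \leq g(y)$ directly after collapsing $\up\up$ to $\up$. Once (a)$\Leftrightarrow$(b) is in hand, I would simply remark that (a)$\Leftrightarrow$(c) is proved symmetrically, replacing every upset by a downset and reversing the relevant inequalities, so no separate computation is needed.
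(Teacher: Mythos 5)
Your proposal is correct and follows essentially the same route as the paper: the forward direction (a)$\Rightarrow$(b) is identical, and the converse differs only in that you test the hypothesis at $B=\up\{g(x)\}$ (requiring the idempotence $\up\up=\up$) where the paper uses the slightly leaner choice $B=\{g(x)\}$. Both work, and the symmetric treatment of (c) matches the paper's.
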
		
\begin{proof}
	First assume (a); we will show (b). 
	Take $y\in \up g^{-1}(B)$. Then there exists $x\in g^{-1}(B)$ with $x \leq y$. Monotonicity of $g$ gives $g(x)\leq g(y)$. But $g(x)\in B$, so $g(y)\in \up B$, and therefore $y\in g^{-1}(\up B)$.

	Next, assume (b); we will show (a). 
	Suppose $x\leq y$ in $S$, that is, $y \in \up \{x\}$.
	Furthermore $x\in g^{-1}\{g(x)\}$. Hence, by assumption:
	\[
		y \in \up \{x\}
		\subseteq \up g^{-1}\{g(x)\}
		\subseteq g^{-1}(\up \{g(x)\}).
	\]
	But this just means $g(y)\in \up \{g(x)\}$, that is, $g(x)\leq g(y)$. 

	The equivalence between (a) and (c) is proved similarly.
\end{proof}

Now we move from ordered sets to ordered spaces.

\begin{definition}
	An \emph{ordered topological space}, or \emph{ordered space} for short, is a topological space equipped with a preorder on its underlying set. Ordered spaces and continuous monotone functions form a category $\OrdTop$.
\end{definition}

Note that this definition imposes \emph{no relation} between the topology and the order. This diverges from parts of the literature, notably~\cite{nachbin}, which use the same terminology to refer to spaces with order relations that are closed in the product topology. As this property is not important to us (see \cref{example:no open cones}), we adopt a more general definition.

%The category $\Ord$ is the (coreflective) subcategory of $\OrdTop$ of discrete ordered spaces. 

\begin{definition}
	We say an ordered space $S$ has:
		\begin{itemize}
			\item \emph{open upper cones} if $\up U$ is open whenever $U\subseteq S$ is open;
			\item \emph{open lower cones} if $\down U$ is open whenever $U\subseteq S$ is open;
			\item \emph{open cones} if it has both open upper cones and open lower cones.
		\end{itemize}
	Write $\OrdTop_{\OC}^{\up}$, $\OrdTop_{\OC}^{\down}$ and $\OrdTop_{\OC}$ for the full subcategories of $\OrdTop$ of ordered spaces with open upper cones, open lower cones, and open cones, respectively.
\end{definition}

We work exclusively with $\OrdTop_{\OC}$ until \cref{section:upper-lower adjunctions}, where all proofs are separated into upper and lower cases. Our category $\OrdTop_{\OC}^{\down}$ is equal to the category $\cat{OOrdTop}$ from \cite[Section~1]{tholen2009OrderedTopologicalStructures}. We will see in \cref{sec:opens} that the open cone condition provides exactly the right connection between order and topology to obtain a well-defined adjunction.

Any preordered set, regarded as a discrete topological space, has open cones.
On the other extreme, on a codiscrete topological space $S$, any preorder has open cones, because $\down S=\down S = S$ and $\up \varnothing = \down \varnothing =\varnothing$ are always open. The next examples are less trivial, and show that the open cone condition is different from the more well known order-separation axioms.

\begin{example}\label{example:open cones with equality}
  Any topological space becomes an ordered space with equality as the preorder.
  This ordered space has open cones because $\down U = U$ and $\up U = U$.
  Thus there are examples of topological spaces with poorly behaved separation properties that nevertheless have open cones. This also tells us that the open cone condition does not imply any of the order-separation axioms~\cite{mccartan1968SeparationAxiomsTopological}.
\end{example}

\begin{example}\label{example:no open cones}
	Not every ordered space has open cones. Consider the real line $\mathbb{R}$ with the standard Euclidean topology. Imagine there is a discrete point living ``before'' the origin of this line. Formally, we get the space $\lbrace\ast\rbrace\sqcup \mathbb{R}$, with the preorder generated by $\ast\leq 0$. Now $\up \{\ast\} = \{\ast\} \sqcup \{0\}$, but of course the singleton $\{0\}$ is not open in $\mathbb{R}$, so this cone is not open. See also \Cref{fig:Lambda}. (For another example see~\cite[Example 10.1.1]{richmond2020GeneralTopologyIntroduction}.)
	% \todo{[Illustration?]}
	
	On the other hand, it is easy to see that the graph of this preorder is closed. Namely, it is the union of the diagonals of $\mathbb{R}$ and $\lbrace \ast\rbrace$ (which are both closed since they are both Hausdorff), together with the singleton $\lbrace (\ast,0)\rbrace$, which is also closed. Hence this is an example of a \emph{T$_2$-ordered} space \cite[Theorem 2]{mccartan1968SeparationAxiomsTopological}, also known as a \emph{pospace}. This, together with the previous example, shows that our condition of having open cones really is different than imposing (order-)separation axioms.
\end{example}

\begin{example}\label{example:specialisation order}
  Any preorder induces the \emph{upper topology} on its carrier set, where a subset $U$ is open if and only if it is upward closed: if $x \in U$ and $x \leq y$, then also $y \in U$. In this way any preordered set becomes an ordered space. By construction it has open upper cones, but its lower cones are open if and only if $\down \up A = \up \down \up A$ for any subset $A$. Any monotone function is automatically continuous for the upper topology.

  Any lattice, that is, any partially ordered set with binary least upper bounds and binary greatest lower bounds, induces the \emph{interval topology} on its carrier set, where a subset is open if and only if it is a union of intervals $\langle x,z \rangle = \{ y \mid x \leq y \leq z\}$. The resulting ordered space has open cones, because
  \[
    \up \bigcup \langle x_i,z_i \rangle = \bigcup \up \langle x_i,z_i \rangle = \bigcup \{ \langle x_i,z \rangle \mid x_i \leq z \},
  \]
  and similarly for past cones.

  Any topological space has a \emph{specialisation} preorder, where $x \leq y$ if and only if $x \in \overline{\{y\}}$. In this way any topological space becomes an ordered space, that always has open future cones, but need not have open past cones. %\todo{We have $\up U = U$ for open $U$, but $\down U$ is a union of closed sets so may not be open, see \href{https://stacks.math.columbia.edu/tag/0060}{here}?}
Any continuous function is automatically monotone for the specialisation preorders.
\end{example}

Before moving on to physically motivated examples, we record an important characterisation of the open cone condition.

\begin{proposition}\label{proposition:open cones iff internal cones}
	An ordered space has open upper/lower cones if and only if the following holds, respectively:
		\begin{itemize}
			\item if $x\leq y$ then $y\in (\up U)^\circ$ for any open neighbourhood $U$ of $x$;
			\item if $x\leq y$ then $x\in (\down V)^\circ$ for any open neighbourhood $V$ of $y$.
		\end{itemize}
\end{proposition}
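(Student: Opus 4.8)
The plan is to prove the equivalence for open upper cones only; the statement for open lower cones then follows by the evident order-dual argument (replace $\up$ by $\down$, the neighbourhood $U$ of $x$ by a neighbourhood $V$ of $y$, and swap the roles of $x$ and $y$). The single observation driving both directions is the standard characterisation of openness: a subset $W$ is open exactly when $W = W^\circ$, equivalently when every one of its points is interior to it. So the whole argument amounts to rephrasing ``$\up U$ is open'' as ``every $y \in \up U$ lies in $(\up U)^\circ$'' and recognising that the displayed pointwise condition is precisely this rephrasing.

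For the forward implication I would assume the space has open upper cones, and take $x \leq y$ together with an open neighbourhood $U$ of $x$. By hypothesis $\up U$ is open, hence $(\up U)^\circ = \up U$. Since $x \in U$ and $x \leq y$, the definition of the upper cone immediately gives $y \in \up U = (\up U)^\circ$, which is what we want.

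For the converse I would assume the pointwise condition and fix an arbitrary open set $U \subseteq S$, aiming to show $\up U$ is open. Since $(\up U)^\circ \subseteq \up U$ always holds, it suffices to establish the reverse inclusion. Given $y \in \up U$, by definition there is some $x \in U$ with $x \leq y$, and crucially $U$ itself is an open neighbourhood of this $x$. Feeding $x \leq y$ and this neighbourhood into the hypothesis yields $y \in (\up U)^\circ$, so $\up U = (\up U)^\circ$ is open.

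There is no genuinely hard step: the content is purely unwinding the definition of $\up U$ alongside the interior-characterisation of openness. The only place warranting a moment's care is in the converse, where one must apply the pointwise condition to the given open set $U$ in the role of the neighbourhood of $x$, rather than to some smaller neighbourhood; choosing $U$ itself is exactly what makes the witness $y \in (\up U)^\circ$ land in the interior of the set we are trying to prove open.
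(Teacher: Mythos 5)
Your argument is correct and is essentially identical to the paper's proof: the forward direction uses $y\in\up\{x\}\subseteq\up U=(\up U)^\circ$, and the converse applies the pointwise condition with $U$ itself as the neighbourhood of the witness $x$ to conclude $\up U\subseteq(\up U)^\circ$, with the lower-cone case handled by the dual argument. No issues.
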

Here $A\interior$ denotes the interior of a subset $A\subseteq S$.
\begin{proof}
	Suppose the ordered space has open upper cones, and let $x\leq y$. We need to show that $y\in (\up U)\interior$ for any open neighbourhood $U$ of $x$. But since $x\leq y$ we have $y\in \up \{x\}$, so if $U\ni x$ is open we get $y\in \up U= (\up U)\interior$, as desired.

	For the converse, let $U$ be an arbitrary open set. We need to show ${\up U \subseteq (\up U)^\circ}$. Take an element $y\in \up U$, meaning that there exists $x\in U$ with $x\leq y$. Now the assumption gives precisely that $y\in (\up U)\interior$. Hence $\up U=(\up U)\interior$.
	
	The proof for the lower cones is entirely analogous.
\end{proof}

\begin{figure}[b]
  \centering\begin{tikzpicture}[yscale=.7]
		\draw[yellow_opencone] (-1,1) to (-1,-1) to (3,-1) to (3,1) to (-1,1);
		\draw[thin_dotted] (-1,1) to (-1,-1);
		\draw[thin_dotted] (3,1) to (3,-1);
		\draw (-4,1) to (4,1);
		\node[circle, inner sep=1pt, draw=black, fill=black] at (0,0) {};
		\node[right] at (0,0) {$U = \{(0,0)\}$};
		\draw[dashed] (0,0) to (0,2);
		\node[right] at (0,2) {$\up U$};
		\node[above] at (2.5,-1) {$\down V$};
		\node at (-1,1) {$($};
		\node at (3,1) {$)$};
		\draw[thick] (-1,1) to (3,1);
		\node[above] at (2.5,1) {$V$};
  \end{tikzpicture}
  \caption{An ordered space that satisfies~\eqref{eq:Lambda} but does not have open cones: $U = \{(0,0)\}$ is open, but $\up U = \{(0,0),(0,1)\}$ is not.}
  \label{fig:Lambda}
\end{figure}
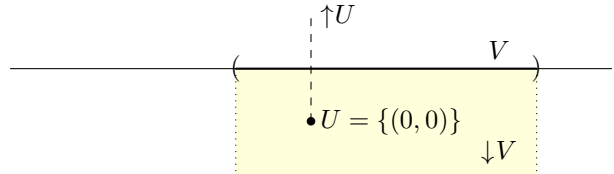

\begin{example}\label{example:failure of Lambda}
		A consequence of \cref{proposition:open cones iff internal cones} is that $S$ has open cones if and only if
		\[
  		U\cap \up\lbrace x\rbrace \subseteq \up\left((\down U)\interior\cap\lbrace x\rbrace\right)
  		\qquad\text{and}\qquad
  		U\cap \down\lbrace x\rbrace \subseteq \down\left((\up U)\interior\cap\lbrace x\rbrace\right)
		\]
		for all $x\in S$ and $U\in\Opens S$. The natural localic generalisation of this condition is to say that
		\begin{equation}\label{eq:Lambda}\tag{$\ast$}
  		U\cap \up V \subseteq \up\left((\down U)\interior\cap V\right)
	  	\qquad\text{and}\qquad
		  U\cap \down V \subseteq \down\left((\up U)\interior\cap V\right)
		\end{equation}
		holds for all $U,V\in\Opens S$. The following is an example of an ordered space that satisfies~\eqref{eq:Lambda}, but does \emph{not} have open cones.
		
		Recall the ordered space in \cref{example:no open cones}, which we now exhibit as the subspace 
		\[
		S= \left(\lbrace 0\rbrace \times \lbrace 0 \rbrace\right)\cup \left(\mathbb{R}\times\lbrace  1\rbrace\right)\subseteq \mathbb{R}^2,
		\]
		where the order $\leq$ on $\mathbb{R}^2$ is given by $(x,y)\leq (a,b)$ if and only if $x=a$ and ${y\leq b}$. Take the topology on $\mathbb{R}^2$ generated by the basis consisting of subsets of the form $(a,b)\times (-\infty,y)\subseteq \mathbb{R}^2$, for arbitrary $a,b,y\in\mathbb{R}$. With this new topology, $\mathbb{R}^2$ has open cones, but $S$ does not. However, after some elementary checks, it can be seen that $S$ does satisfy~\eqref{eq:Lambda}. See also \Cref{fig:Lambda}.
%		\[
%			S = \left([0,1)\times \lbrace 0 \rbrace\right)\cup \left(\mathbb{R}\times\lbrace  1\rbrace\right).
%		\]
%		This space does \emph{not} have open cones since $\up \left([0,1)\times \lbrace 0 \rbrace\right) = \left([0,1)\times \lbrace 0 \rbrace\right)\cup \left([0,1)\times \lbrace 1 \rbrace\right)$ is not open. 
%		{\color{blue}Perhaps taking the subspace of \cref{example:no open cones} is easier; does that one satisfy the localic law?}
\end{example}

We end this section by considering examples from relativity theory, which were our original motivation. In this setting, the upset $\up \{x\}$ of a point $x$ is typically called its \emph{causal future} and denoted $J^+(x)$, and $\down \{x\}$ is called the \emph{causal past} and denoted $J^-(x)$. Similarly, the interior $(\up \{x\})\interior$ is called the \emph{chronological future} and written $I^+(x)$, and $(\down \{x\})\interior$ is called the \emph{chronological past} and written $I^-(x)$. Extending this notation with $J^\pm(U)=\bigcup_{x \in U} J^\pm(x)$ and $I^\pm(U)=J^\pm(U)\interior$, the open cone condition says $I^\pm(U)=J^\pm(U)$ for all opens $U$. In the setting of relativity theory, $I^\pm$ are not intrinsically defined as the interior of $J^\pm$, but rather this is a derived property~\cite[Theorem 2.27]{minguzzi2019LorentzianCausalityTheory}, that we here lift to a definition. 

We will illustrate our results with diagrams taken from relativity theory intuition: one time dimension runs upwards, and one space dimension is drawn horizontally. Future and past cones of points and open sets then look as in \Cref{figure:illustrations}.

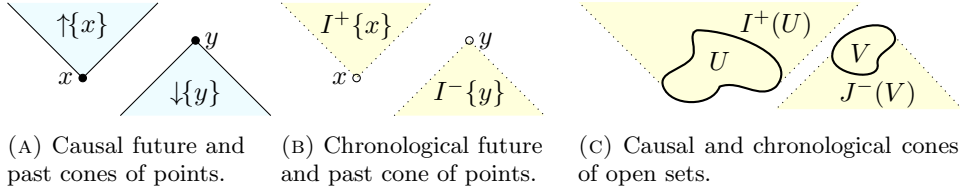
\begin{figure}[t]
	\centering
	\begin{subfigure}[b]{0.25\textwidth}
		\begin{tikzpicture}
			\draw[blue_cone] (0,0) to (1,1) to (-1,1) to (0,0);	
			\draw (-1,1) to (0,0) to (1,1);
			\node[circle, inner sep=1pt, draw=black, fill=black] at (0,0) {};
			\node[left] at (0,0) {$x$};
			\node at (0,.7) {$\up \{x\}$};
			\draw[blue_cone] (.5,-.5) to (1.5,.5) to (2.5,-.5) to (.5,-.5);
			\draw (.5,-.5) to (1.5,.5) to (2.5,-.5);
			\node[circle, inner sep=1pt, draw=black, fill=black] at (1.5,.5) {};
			\node[right] at (1.5,.5) {$y$};
			\node at (1.5,-.2) {$\down \{y\}$};
		\end{tikzpicture}
		\caption{Causal future and past cones of points.}
	\end{subfigure}\hfill
	\begin{subfigure}[b]{0.275\textwidth}
		\begin{tikzpicture}
			\draw[yellow_opencone] (0,0) to (1,1) to (-1,1) to (0,0);	
			\draw[thin_dotted] (-1,1) to (0,0) to (1,1);
			\node[circle, inner sep=1pt, draw=black, fill=none] at (0,0) {};
			\node[left] at (0,0) {$x$};
			\node at (0,.7) {$I^+ \{x\}$};
			\draw[yellow_opencone] (.5,-.5) to (1.5,.5) to (2.5,-.5) to (.5,-.5);
			\draw[thin_dotted] (.5,-.5) to (1.5,.5) to (2.5,-.5);
			\node[circle, inner sep=1pt, draw=black, fill=none] at (1.5,.5) {};
			\node[right] at (1.5,.5) {$y$};
			\node at (1.5,-.2) {$I^- \{y\}$};
		\end{tikzpicture}
		\caption{Chronological future and past cone of points.}
	\end{subfigure}\hfill
	\begin{subfigure}[b]{0.4\textwidth}
		\begin{tikzpicture}[scale=.75]
			\draw[yellow_opencone] (-1.5,1.5) to (0,0) to (1,.5) to (2,0) to (3.5,1.5);
			\draw[yellow_open, thick] (0,0) to[out=110,in=-130] (.5,.5) to[out=50,in=200] (.6,1) to[out=0,in=50] (2,0) to[out=-140,in=20] (1,0) to[out=-110,in=-70] (0,0);			
			\draw[thin_dotted] (0,0) to[out=110,in=-130] (.5,.5) to[out=50,in=180] (.6,1) to[out=0,in=50] (2,0) to[out=-140,in=20] (1,0) to[out=-110,in=-70] (0,0);			
			\draw[thin_dotted] (0,0) to (-1.5,1.5);
			\draw[thin_dotted] (2,0) to (3.5,1.5);
			\node at (1,.45) {$U$};
			\node at (2,1.1) {$I^+(U)$};
			\draw[yellow_opencone] (5.3,-.4) to (4,1) to (3,.6) to (2,-.4);
			\draw[thin_dotted] (3,.6) to (2,-.4);
			\draw[thin_dotted] (4,1) to (5.3,-.4);
			\draw[yellow_open, thick] (3,.5) to[out=100,in=110] (4.1,.9) to[out=-70,in=100] (3.8,.4) to[out=-80,in=-80] (3,.5);
			\draw[thin_dotted] (3,.5) to[out=100,in=110] (4.1,.9) to[out=-70,in=100] (3.8,.4) to[out=-80,in=-80] (3,.5);
			\node at (3.5,.6) {$V$};
			\node at (3.8,-.12) {$J^-(V)$};
		\end{tikzpicture}
		\caption{Causal and chronological cones of open sets.}
	\end{subfigure}
\caption{Illustrations of $\leq$ in a Minkowski-like ordered space.}
\label{figure:illustrations}
\end{figure}

The following lemma shows that the open cone condition can be regarded as abstracting the \emph{``push-up'' principle} from Lorentzian causality theory~\cite{grant2020FutureNotAlwaysOpen,kronheimer1967structure}.

\begin{lemma}\label{lemma:open cones iff JI=I}
  An ordered space has open upper cones if and only if ${\up (\up U)\interior \subseteq (\up U)\interior}$ for all opens $U$, and open lower cones if and only if
  $\down (\down U)\interior \subseteq (\down U)\interior$ for all opens~$U$.
\end{lemma}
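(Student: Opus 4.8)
The plan is to reduce the whole statement to the observation that an ordered space has open upper cones exactly when $\up U = (\up U)\interior$ holds for every open $U$, since a set is open precisely when it coincides with its own interior. The lemma then amounts to showing that this equality is equivalent to the single one-sided inclusion $\up(\up U)\interior \subseteq (\up U)\interior$. Both directions follow from the basic cone calculus of \cref{lem:cones} together with one elementary fact about interiors.

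For the forward implication I would assume open upper cones. Then $\up U$ is open, so $(\up U)\interior = \up U$, and using idempotence of $\up$ from \cref{lem:cones}(b) I obtain
\[
  \up(\up U)\interior = \up\up U = \up U = (\up U)\interior,
\]
which gives the required inclusion (in fact as an equality).

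For the converse I would take an arbitrary open $U$ and use $U \subseteq \up U$ from \cref{lem:cones}(a). Since $U$ is open and contained in $\up U$, and the interior is the largest open set contained in $\up U$, we get $U \subseteq (\up U)\interior$. Applying monotonicity of $\up$ (\cref{lem:cones}(c)) and then the hypothesis yields
\[
  \up U \subseteq \up(\up U)\interior \subseteq (\up U)\interior.
\]
As the reverse inclusion $(\up U)\interior \subseteq \up U$ is automatic, I conclude $\up U = (\up U)\interior$, so $\up U$ is open, i.e.\ the space has open upper cones. The lower-cone statement is proved by the identical argument with $\down$ in place of $\up$.

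There is no serious difficulty here; the only slightly non-formal step — the \emph{obstacle}, such as it is — is recognising that openness of $U$ bootstraps the bare inclusion hypothesis into the full equality $\up U = (\up U)\interior$. The key point is that $U$ itself witnesses $(\up U)\interior$ already containing $U$, after which monotonicity propagates this upward to all of $\up U$. I would flag in passing that, in the relativistic notation introduced just above, this condition reads $J^\pm(I^\pm(U)) \subseteq I^\pm(U)$, making explicit that the open cone condition abstracts exactly the push-up principle.
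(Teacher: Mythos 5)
Your proof is correct and follows essentially the same route as the paper: the forward direction uses $\up U = (\up U)\interior$ plus idempotence of $\up$ from \cref{lem:cones}(b), and the converse uses $U \subseteq (\up U)\interior$ (via openness of $U$ and \cref{lem:cones}(a)) followed by monotonicity of $\up$ from \cref{lem:cones}(c). The argument, including the key observation that $U$ itself witnesses containment in the interior, matches the paper's proof step for step.
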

%\todo{This is still a bit awkwardly written. Maybe we can use $\updown U$ as notation analogous to $J^\pm(U)$? So then the lemma would be: \emph{An ordered space has open upper/lower cones if and only if $\updown(\updown U)^\circ \subseteq (\updown U)^\circ$ for all opens $U$.}}
\begin{proof}
  Suppose that the stated condition for upper cones holds, and let $U$ be an open subset.
  By construction $(\up U)\interior \subseteq \up U$, so we need to show the converse inclusion.
  From \Cref{lem:cones}(a) we know that $U \subseteq \up U$, so openness of $U$ gives $U=U\interior \subseteq (\up U)\interior$. 
  Because upsets are monotone with respect to subset inclusion by \cref{lem:cones}(c), this gives $\up U \subseteq \up (\up U)\interior \subseteq (\up U) \interior$, as desired.

	Conversely, if we have open upper cones, then
	\[
    \up ( \up U)\interior = \up \up U = \up U = (\up U)\interior
	\]
	follows from \cref{lem:cones}(b). The proof for lower cones is analogous.
\end{proof}

A \emph{smooth spacetime} is a four-dimensional connected smooth Lorentzian manifold that is time oriented~\cite[Definition 5.3]{landsman2021GR}. We may define the \emph{causality relation} $x \leq y$ on points of a spacetime when there exists a \emph{future directed causal curve} from $x$ to $y$: that is, a smooth curve whose tangent vector is future directed and causal at every point (see \emph{e.g.} \cite[Section 5.3]{landsman2021GR} or \cite{minguzzi2019LorentzianCausalityTheory} for more details).

\begin{corollary}\label{corollary:smooth spacetimes have OC}
	Any smooth spacetime has open cones.
\end{corollary}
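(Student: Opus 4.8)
The plan is to invoke \cref{lemma:open cones iff JI=I}, which reduces the claim to checking the two inclusions $\up(\up U)\interior\subseteq(\up U)\interior$ and $\down(\down U)\interior\subseteq(\down U)\interior$ for every open $U$ in the manifold topology. (First one should note that the causality relation really is a preorder on the underlying set: transitivity follows by concatenating future-directed causal curves, and reflexivity holds by the usual convention $x\leq x$, so that a smooth spacetime is an ordered space in the sense of this paper.) Rewriting the first inclusion in the causal notation introduced above, where $J^{\pm}(U)=\up U,\down U$ and $I^{\pm}(U)=(\up U)\interior,(\down U)\interior$, it becomes $J^{+}(I^{+}(U))\subseteq I^{+}(U)$, and dually the second becomes $J^{-}(I^{-}(U))\subseteq I^{-}(U)$. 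Thus the whole statement comes down to the push-up principle of Lorentzian causality theory, already alluded to before \cref{lemma:open cones iff JI=I}.

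Concretely, I would draw on two standard facts about smooth spacetimes, both of which I would cite rather than reprove. The first is that for any subset the chronological cone is open and coincides with the interior of the causal cone, so that indeed $I^{\pm}(U)=(J^{\pm}(U))\interior$; this is exactly the derived property \cite[Theorem 2.27]{minguzzi2019LorentzianCausalityTheory} that we have promoted to the \emph{definition} of open cones. The second is the push-up lemma in its two symmetric forms: if $x\ll y$ (there is a future-directed \emph{timelike} curve from $x$ to $y$) and $y\leq z$, then $x\ll z$; and symmetrically, if $x\leq y$ and $y\ll z$, then $x\ll z$.

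Granting these, the computation is immediate. For the future case, take $z\in\up(\up U)\interior=J^{+}(I^{+}(U))$: there is some $y\in I^{+}(U)$ with $y\leq z$, and in turn some $x\in U$ with $x\ll y$. The first form of push-up yields $x\ll z$, hence $z\in I^{+}(x)\subseteq I^{+}(U)=(\up U)\interior$. The past case is entirely symmetric, using the second form of push-up. I expect the main obstacle to be purely expository: the two inputs above carry the genuine geometric content, resting on the local Lorentzian structure (normal neighbourhoods, the exponential map, and the deformation of a non-null causal curve into a timelike one), and the cleanest route is to import them from the literature rather than to reconstruct their proofs here.
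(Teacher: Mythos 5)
Your proposal is correct and follows the paper's route exactly: the paper's proof is the one-line ``Combine \cref{lemma:open cones iff JI=I} with \cite[Proposition 5.4]{landsman2021GR}'', where the cited proposition is precisely the push-up principle you invoke. You have merely unpacked the content of that citation (the identity $I^{\pm}(U)=(J^{\pm}(U))\interior$ and the two symmetric push-up lemmas) and carried out the short resulting computation, which is a faithful expansion of the same argument.
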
		
\begin{proof}
	Combine \cref{lemma:open cones iff JI=I} with \cite[Proposition 5.4]{landsman2021GR}.
\end{proof}

We expect that the previous corollary continues to hold for a large class of lower-regularity spacetimes; essentially those where the push-up principle continues to hold. However, extremely low-regularity spacetimes potentially exhibit more degenerate causal structure, see \textit{e.g.}~\cite{chrusciel2012LorentzianCausalityContinuous,grant2020FutureNotAlwaysOpen} and references therein.
The previous corollary shows that $\OrdTop_\OC$ may serve as a viable category of abstract (smooth) spacetimes, and also provides many interesting examples.

\section{Ordered locales}\label{section:ordered locales}

This section discusses the notion of \emph{ordered locales}. These axiomatise how the structure of an ordered space transfers to its lattice of open subsets, as will be discussed in the next section. The main idea is to introduce a new order $\Leq$ between open subsets, where $U\Leq V$ if and only if the elements of $U$ and $V$ are suitably related. There are essentially three canonical choices for this~\cite[Section~11.1]{vickers1996topology}:
	\begin{itemize}
		\item the \emph{upper order:} $U\LeqU V$ if and only if $V\subseteq \up U$;
		\item the \emph{lower order:} $U\LeqL V$ if and only if $U\subseteq \down V$;
		\item the \emph{Egli-Milner order}: $U\LeqEM V$ if and only if $U\LeqU V$ and $U\LeqL V$.
	\end{itemize}
Hence the interpretation of $U\LeqEM V$ is that every element of $U$ lies in the past of some element of $V$, and every element of $V$ lies in the future of some element of $U$. Since we assume $U$ and $V$ to be open, this can equivalently be expressed as: $U\subseteq (\down V)\interior$ and $V\subseteq (\up U)\interior$. See \cref{figure:illustration of causal order on opens} for the visual intuition. The behaviour of $\leq$ transfers to properties of these orders, which we will capture as the axioms for ordered locales below.
\begin{figure}
	\centering
	\begin{subfigure}[b]{0.29\textwidth}
		{\tikzstyle{every picture}=[tikzfig]\begin{tikzpicture}
	\begin{pgfonlayer}{nodelayer}
		\node [style=none] (31) at (-0.5, 3.25) {};
		\node [style=none] (32) at (0.65, 3.5) {};
		\node [style=none] (33) at (0.9, 2.4) {};
		\node [style=none] (34) at (0, 2) {};
		\node [style=none] (27) at (-1.25, 3.25) {};
		\node [style=none] (28) at (1.75, 2) {};
		\node [style=none] (29) at (3, -2.5) {};
		\node [style=none] (30) at (-3, -2.5) {};
		\node [style=none] (23) at (-3, 4.25) {};
		\node [style=none] (24) at (3, 4.25) {};
		\node [style=none] (25) at (1.75, -1.5) {};
		\node [style=none] (26) at (-1.75, -1.75) {};
		\node [style=none] (0) at (-1, -0.75) {};
		\node [style=none] (1) at (0.25, 0) {};
		\node [style=none] (2) at (1.5, -1.75) {};
		\node [style=none] (3) at (-0.25, -2) {};
		\node [style=none] (4) at (-1.75, -1.75) {};
		\node [style=none] (6) at (0, -1) {$U$};
		\node [style=none] (7) at (-1, 1.75) {};
		\node [style=none] (8) at (0.75, 1.5) {};
		\node [style=none] (9) at (1.75, 2) {};
		\node [style=none] (10) at (0.5, 2.75) {};
		\node [style=none] (11) at (0, 4) {};
		\node [style=none] (12) at (-1.25, 3.25) {};
		\node [style=none] (13) at (-0.25, 2.75) {$V$};
		\node [style=none] (14) at (-3, -2.5) {};
		\node [style=none] (15) at (3, -2.5) {};
		\node [style=none] (16) at (0.65, 3.5) {};
		\node [style=none] (17) at (0.9, 2.4) {};
		\node [style=none] (18) at (-3, 4.25) {};
		\node [style=none] (19) at (3, 4.25) {};
		\node [style=none] (20) at (1.75, -1.5) {};
		\node [style=none] (21) at (-3.75, 2.5) {$(\up U)\interior$};
		\node [style=none] (22) at (4, -1.25) {$(\down V)\interior$};
	\end{pgfonlayer}
	\begin{pgfonlayer}{edgelayer}
		\draw [style={blue_cone}] (34.center)
			 to (31.center)
			 to (32.center)
			 to (33.center)
			 to cycle;
		\draw [style={blue_cone}] (29.center)
			 to (30.center)
			 to (27.center)
			 to (28.center)
			 to cycle;
		\draw [style={yellow_cone_light}] (24.center)
			 to (25.center)
			 to (26.center)
			 to (23.center)
			 to cycle;
		\draw [style={yellow_open}] (1.center)
			 to [in=30, out=-15] (2.center)
			 to [in=45, out=-150] (3.center)
			 to [in=-90, out=-135] (4.center)
			 to [in=240, out=90] (0.center)
			 to [in=165, out=75] cycle;
		\draw [style={blue_open}] (8.center)
			 to [in=-90, out=15] (9.center)
			 to [in=-120, out=90] (10.center)
			 to [in=345, out=60, looseness=1.25] (11.center)
			 to [bend left=315] (12.center)
			 to [bend right] (7.center)
			 to [in=-165, out=-45, looseness=0.75] cycle;
		\draw [style={thin_dotted}] (12.center) to (14.center);
		\draw [style={thin_dotted}] (9.center) to (15.center);
		\draw [style={thin_dotted}] (16.center) to (17.center);
		\draw [style={thin_dotted}] (4.center) to (18.center);
		\draw [style={thin_dotted}] (20.center) to (19.center);
	\end{pgfonlayer}
\end{tikzpicture}}
		\caption{Archetypal $U\LeqEM V$.}
	\end{subfigure}\hfill
	\begin{subfigure}[b]{0.33\textwidth}
		{\tikzstyle{every picture}=[tikzfig]\begin{tikzpicture}
	\begin{pgfonlayer}{nodelayer}
		\node [style=none] (38) at (-1.15, 2.5) {};
		\node [style=none] (39) at (3.6, 2.5) {};
		\node [style=none] (40) at (4.5, -2.5) {};
		\node [style=none] (41) at (-2, -2.5) {};
		\node [style=none] (16) at (-2.5, 4.25) {};
		\node [style=none] (17) at (2.3, 4.25) {};
		\node [style=none] (18) at (-1, -1.45) {};
		\node [style=none] (19) at (0.75, -1.5) {};
		\node [style=none] (0) at (-1, -1) {};
		\node [style=none] (1) at (-0.25, -1.75) {};
		\node [style=none] (2) at (0.75, -1.5) {};
		\node [style=none] (3) at (0.5, -0.75) {};
		\node [style=none] (4) at (-0.25, -0.25) {};
		\node [style=none] (5) at (-0.23, -1) {$U$};
		\node [style=none] (12) at (-0.5, 2.25) {$V$};
		\node [style=none] (13) at (-1, -1.45) {};
		\node [style=none] (14) at (-2.5, 4.25) {};
		\node [style=none] (15) at (2.3, 4.25) {};
		\node [style=none] (20) at (-2, -2.5) {};
		\node [style=none] (21) at (4.5, -2.5) {};
		\node [style=none] (25) at (1.71, 2) {};
		\node [style=none] (26) at (2.75, 1.75) {};
		\node [style=none] (27) at (2.75, 2.75) {};
		\node [style=none] (28) at (1.84, 2.5) {};
		\node [style=none] (29) at (3.6, 2.5) {};
		\node [style=none] (30) at (-1.15, 2.5) {};
		\node [style=none] (31) at (-0.65, 1.75) {};
		\node [style=none] (32) at (1.71, 2) {};
		\node [style=none] (35) at (1.84, 2.5) {};
		\node [style=none] (36) at (-3.25, 2.5) {$(\up U)\interior$};
		\node [style=none] (37) at (2.75, -1.25) {$(\down V)\interior$};
	\end{pgfonlayer}
	\begin{pgfonlayer}{edgelayer}
		\draw [style={blue_cone}] (39.center)
			 to (40.center)
			 to (41.center)
			 to (38.center)
			 to cycle;
		\draw [style={yellow_cone_light}] (18.center)
			 to (16.center)
			 to (17.center)
			 to (19.center)
			 to cycle;
		\draw [style={yellow_open}] (1.center)
			 to [in=-135, out=225] (0.center)
			 to [in=165, out=45] (4.center)
			 to [in=90, out=-15, looseness=0.75] (3.center)
			 to [in=75, out=-90, looseness=0.75] (2.center)
			 to [in=45, out=-105, looseness=1.25] cycle;
		\draw [style={thin_dotted}] (13.center) to (14.center);
		\draw [style={thin_dotted}] (2.center) to (15.center);
		\draw [style={red_open}] (28.center)
			 to (25.center)
			 to [in=180, out=0] (26.center)
			 to [in=15, out=0, looseness=3.00] (27.center)
			 to [in=0, out=-165] cycle;
		\draw [style={blue_open}] (35.center)
			 to [in=75, out=-180, looseness=0.75] (30.center)
			 to [in=180, out=-105, looseness=1.25] (31.center)
			 to [in=180, out=0] (32.center)
			 to cycle;
		\draw [style={thin_dotted}] (30.center) to (20.center);
		\draw [style={thin_dotted}] (29.center) to (21.center);
	\end{pgfonlayer}
\end{tikzpicture}}
		\caption{$U\LeqL V$, but $U\not\LeqU V$.}
	\end{subfigure}\hfill
	\begin{subfigure}[b]{0.29\textwidth}
		{\tikzstyle{every picture}=[tikzfig]\begin{tikzpicture}
	\begin{pgfonlayer}{nodelayer}
		\node [style=none] (68) at (-3.25, 1.25) {};
		\node [style=none] (69) at (0.75, 0.75) {};
		\node [style=none] (70) at (1.75, -2.5) {};
		\node [style=none] (71) at (-4.25, -2.5) {};
		\node [style=none] (57) at (-1.25, 0.2) {$U$};
		\node [style=none] (58) at (-3.75, 4.25) {};
		\node [style=none] (59) at (1.25, 4.25) {};
		\node [style=none] (60) at (0.15, 0.5) {};
		\node [style=none] (61) at (-3.75, 4.25) {};
		\node [style=none] (62) at (1.25, 4.25) {};
		\node [style=none] (63) at (0.15, 0.5) {};
		\node [style=none] (64) at (-2.5, 0.25) {};
		\node [style=none] (65) at (-4.5, 2.75) {$(\up U)\interior$};
		\node [style=none] (66) at (-4.25, -2.5) {};
		\node [style=none] (67) at (1.75, -2.5) {};
		\node [style=none] (72) at (0.25, -1.5) {$(\down V)\interior$};
		\node [style=none] (44) at (-3.25, 1.25) {};
		\node [style=none] (45) at (-3.25, 0) {};
		\node [style=none] (46) at (-1.25, -0.75) {};
		\node [style=none] (47) at (0.25, -0.25) {};
		\node [style=none] (48) at (0.75, 0.75) {};
		\node [style=none] (49) at (0, 2.25) {};
		\node [style=none] (50) at (-2.25, 2.75) {};
		\node [style=none] (51) at (-2.5, 0.75) {};
		\node [style=none] (52) at (-1.25, 0.75) {};
		\node [style=none] (53) at (0, 0.75) {};
		\node [style=none] (54) at (0, 0.25) {};
		\node [style=none] (55) at (-1.25, -0.25) {};
		\node [style=none] (56) at (-2.5, 0.25) {};
		\node [style=none] (77) at (-3.25, 1.25) {};
		\node [style=none] (78) at (-3.25, 0) {};
		\node [style=none] (79) at (-1.25, -0.75) {};
		\node [style=none] (80) at (0.25, -0.25) {};
		\node [style=none] (81) at (0.75, 0.75) {};
		\node [style=none] (90) at (0.15, 0.5) {};
		\node [style=none] (95) at (0, 0.25) {};
		\node [style=none] (96) at (-1.25, -0.25) {};
		\node [style=none] (97) at (-2.51, 0.25) {};
		\node [style=none] (102) at (-3.01, 1.88) {};
		\node [style=none] (103) at (0.48, 1.51) {};
		\node [style=none] (104) at (-1.25, 1.75) {$V$};
	\end{pgfonlayer}
	\begin{pgfonlayer}{edgelayer}
		\draw [style={blue_cone}] (68.center)
			 to (69.center)
			 to (70.center)
			 to (71.center)
			 to cycle;
		\draw [style={yellow_cone_light}] (64.center)
			 to (61.center)
			 to (62.center)
			 to (63.center)
			 to cycle;
		\draw [style={thin_dotted}] (44.center) to (66.center);
		\draw [style={thin_dotted}] (48.center) to (67.center);
		\draw [style={blue_open}] (50.center)
			 to [bend left, looseness=0.75] (49.center)
			 to [in=105, out=-45] (48.center)
			 to [in=30, out=-75] (47.center)
			 to [in=0, out=-150, looseness=0.75] (46.center)
			 to [in=-75, out=180, looseness=0.75] (45.center)
			 to [in=255, out=105, looseness=0.75] (44.center)
			 to [in=-165, out=75, looseness=0.75] cycle;
		\draw [style={yellow_open}] (54.center)
			 to [in=0, out=-150] (55.center)
			 to [in=-60, out=180, looseness=0.50] (56.center)
			 to [bend left=45, looseness=0.75] (51.center)
			 to [in=-180, out=30, looseness=0.75] (52.center)
			 to [in=150, out=0, looseness=0.75] (53.center)
			 to [in=30, out=-30] cycle;
		\draw [style={thin_dotted}] (56.center) to (58.center);
		\draw [style={thin_dotted}] (60.center) to (59.center);
		\draw [style={red_open}] (102.center)
			 to [in=105, out=-75, looseness=0.25] (97.center)
			 to [in=180, out=-60, looseness=0.50] (96.center)
			 to [in=-150, out=0] (95.center)
			 to (90.center)
			 to (103.center)
			 to (81.center)
			 to [in=30, out=-75] (80.center)
			 to [in=0, out=-150, looseness=0.75] (79.center)
			 to [in=-75, out=180, looseness=0.75] (78.center)
			 to [in=255, out=105, looseness=0.75] (77.center)
			 to [in=-120, out=75, looseness=0.75] cycle;
	\end{pgfonlayer}
\end{tikzpicture}}
		\caption{$U\subseteq V$, but $U\not\LeqU V$.}
	\end{subfigure}
\caption{Illustrations of $\Leq$ in a Minkowski-like ordered space.}
\label{figure:illustration of causal order on opens}
\end{figure}
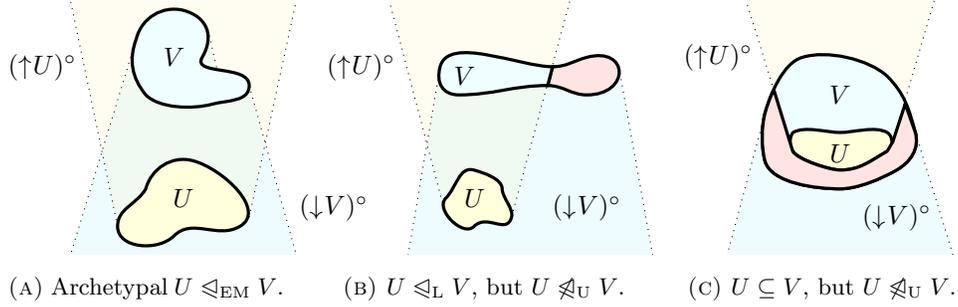

Let us first briefly recall basic properties of the category of locales; see~\cite{picado2012frames} for more details.
A \emph{frame} is a complete lattice $L$ satisfying \emph{infinite distributivity}:
\[
	U\wedge \bigvee V_i = \bigvee (U\wedge V_i).
\]
A \emph{frame morphism} is a function that preserves all suprema and finite meets, so in particular preserves the least and greatest elements. Write $\Frame$ for the category of frames and frame morphisms. The category of \emph{locales} is the opposite:
\[
	\Loc = \Frame\op.
\]
Thus a locale $X$ is formally defined by its \emph{frame of opens} $\Opens X$, and a locale morphism $f \colon X\to Y$ corresponds to a frame morphism $f^{-1}\colon \Opens Y\to \Opens X$, interpreted as the `preimage' map of $f$. 
We will write $\sqleq$ for the intrinsic order of a frame $\Opens X$ (to prevent a notation clash with the order $\leq$ on a topological space).

Now we come to the main definition.

\begin{definition}\label{definition:ordered locales}
	An \emph{ordered locale} is a locale $X$ together with a preorder $\Leq$ on its frame $\Opens X$ of opens, satisfying 
	\begin{equation}
	  \forall i\in I \colon\; U_i \Leq V_i
	  \;\implies\;
	  \bigvee_{i \in I} U_i \Leq \bigvee_{i \in I} V_i,
	  \tag{$\vee$}\label{axiom:V}
	\end{equation}
	where $I$ is an arbitrary indexing set.
\end{definition}

\begin{example}\label{example:locale with inclusion}
	The frame of opens $\Opens X$ of any locale $X$ comes equipped with an intrinsic order: the inclusion relation $\sqleq$. From the infinite distributive law it follows that axiom~\eqref{axiom:V} is satisfied, so $(X,\sqleq)$ is an ordered locale. This example turns out to be related to the specialisation order (mentioned in \cref{example:specialisation order}), and we revisit it in \cref{example:ordered locale with inclusion}.
\end{example}

\begin{lemma}\label{lem:axiomL}
  In an ordered locale:
  \begin{enumerate}
	  \item[(a)] if $U \Leq U'$ and $U \sqleq V$, then $V \Leq V'$ and $U' \sqleq V'$ for some $V'$;
	  \item[(b)] if $U \Leq U'$ and $U' \sqleq V'$, then $U \sqleq V$ and $V \Leq V'$ for some $V$.
  \end{enumerate}
  Diagrammatically:
  \[
		\begin{tikzcd}[every label/.append style = {font = \normalsize},column sep=0.25cm, row sep=0.2cm]
			{U} & {U'} \\
			{V} & {\exists V'}
			\arrow["\sqleq"{anchor=center, rotate=-90}, draw=none, from=2-1, to=1-1]
			\arrow["\sqleq"{anchor=center, rotate=-90}, draw=none, from=2-2, to=1-2]
			\arrow["{}"{description}, "\Leq"{anchor=center}, draw=none, from=1-1, to=1-2]
			\arrow["{}"{description}, "\Leq"{anchor=center}, draw=none, from=2-1, to=2-2]
		\end{tikzcd}  	
		\qquad \qquad
		\begin{tikzcd}[every label/.append style = {font = \normalsize},column sep=0.25cm, row sep=0.2cm]
			{U} & {U'} \\
			{\exists V} & {V'.}
			\arrow["\sqleq"{anchor=center, rotate=-90}, draw=none, from=2-1, to=1-1]
			\arrow["\sqleq"{anchor=center, rotate=-90}, draw=none, from=2-2, to=1-2]
			\arrow["{}"{description}, "\Leq"{anchor=center}, draw=none, from=1-1, to=1-2]
			\arrow["{}"{description}, "\Leq"{anchor=center}, draw=none, from=2-1, to=2-2]
		\end{tikzcd}
  \]
\end{lemma}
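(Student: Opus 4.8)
The plan is to build the missing corner of each square explicitly as a binary join, and then verify the two required relations using nothing more than reflexivity of the preorder $\Leq$ and the join axiom~\eqref{axiom:V}. The one standing fact I would lean on is that in the frame $\Opens X$ the intrinsic order satisfies $U \sqleq V$ if and only if $U \vee V = V$; this lets me turn the hypothesised $\sqleq$-inequalities into equalities of joins, which is exactly the form axiom~\eqref{axiom:V} can digest. I also note that applying axiom~\eqref{axiom:V} to a two-element indexing set is precisely the statement that $\Leq$ respects binary joins, and that is the only instance of the axiom I need.

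For part (a), I would set $V' := U' \vee V$, so that $U' \sqleq V'$ holds by construction. To obtain $V \Leq V'$, I apply axiom~\eqref{axiom:V} to the two-element family consisting of the given relation $U \Leq U'$ together with the reflexivity instance $V \Leq V$; this yields $U \vee V \Leq U' \vee V$. Since $U \sqleq V$ gives $U \vee V = V$, the left-hand side collapses to $V$, so $V \Leq U' \vee V = V'$, as required.

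Part (b) proceeds by the mirror-image argument: I would set $V := U \vee V'$, so that $U \sqleq V$ is immediate. Applying axiom~\eqref{axiom:V} to $U \Leq U'$ together with the reflexivity instance $V' \Leq V'$ gives $U \vee V' \Leq U' \vee V'$, and now the hypothesis $U' \sqleq V'$ collapses the right-hand side via $U' \vee V' = V'$, whence $V = U \vee V' \Leq V'$.

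The content here is genuinely light, so there is no serious obstacle beyond spotting the right candidates: the square is completed on the opposite side from the given $\sqleq$-edge, and the new corner is always the join of the two elements already adjacent to it. The only subtlety worth flagging is that the argument never invokes any direct compatibility between $\Leq$ and $\sqleq$ — indeed in a general ordered locale one should \emph{not} expect $U \sqleq V$ to entail $U \Leq V$ — so every interaction between the two orders must be routed through joins, which is exactly what the chosen candidates $U' \vee V$ and $U \vee V'$ accomplish.
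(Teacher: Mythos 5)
Your proposal is correct and matches the paper's own argument exactly: the paper also sets $V' := U' \vee V$, applies axiom~\eqref{axiom:V} to $U \Leq U'$ and $V \Leq V$ to get $V = U \vee V \Leq U' \vee V$, and handles (b) symmetrically. No gaps.
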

See also \cref{figure:illustration of axioms}.
\begin{proof}
  For (a), since $U\Leq U'$ and $V\Leq V$, axiom~\eqref{axiom:V} implies $V=U\vee V \Leq U'\vee V$, and so $V':= U'\vee V$ satisfies the statement. Point (b) is proved similarly.
\end{proof}

\begin{figure}
	\centering
	% \begin{subfigure}[b]{0.32\textwidth}
		{\tikzstyle{every picture}=[tikzfig]\begin{tikzpicture}
	\begin{pgfonlayer}{nodelayer}
		\node [style=none] (52) at (-2, -1.75) {};
		\node [style=none] (53) at (1.775, -1.75) {};
		\node [style=none] (54) at (-4, 3.75) {};
		\node [style=none] (55) at (3.75, 3.75) {};
		\node [style=none] (40) at (0, 0.75) {};
		\node [style=none] (41) at (-2.75, 2) {};
		\node [style=none] (42) at (0, 3) {};
		\node [style=none] (43) at (2.5, 2) {};
		\node [style=none] (44) at (-4.75, -3) {};
		\node [style=none] (45) at (4.5, -3) {};
		\node [style=none] (30) at (-1, 1.5) {};
		\node [style=none] (31) at (-0.25, 1.25) {};
		\node [style=none] (32) at (-1, 2) {};
		\node [style=none] (33) at (-0.5, 2.5) {};
		\node [style=none] (35) at (1, 1.5) {};
		\node [style=none] (36) at (1, 2.25) {};
		\node [style=none] (37) at (0.25, 2.5) {};
		\node [style=none] (38) at (-3, -3) {};
		\node [style=none] (39) at (3, -3) {};
		\node [style=none] (47) at (0, -1.25) {$U$};
		\node [style=none] (48) at (0, 1.85) {$U'$};
		\node [style=none] (49) at (-1.5, -1.75) {$V$};
		\node [style={RRed_text}] (51) at (-1.75, 3.5) {$\exists V'$};
		\node [style=none] (61) at (-1.5, -0.5) {};
		\node [style=none] (62) at (-1.75, -2.25) {};
		\node [style=none] (63) at (0.25, -2.25) {};
		\node [style=none] (64) at (1.5, -2.25) {};
		\node [style=none] (65) at (1.25, -0.5) {};
		\node [style=none] (56) at (-1, -1.25) {};
		\node [style=none] (57) at (-0.5, -2) {};
		\node [style=none] (58) at (0.25, -1.75) {};
		\node [style=none] (59) at (1, -1.75) {};
		\node [style=none] (60) at (0.5, -0.75) {};
		\node [style=none] (66) at (-1, -1.75) {};
		\node [style=none] (67) at (-3, 3.75) {};
		\node [style=none] (68) at (1, -1.75) {};
		\node [style=none] (69) at (2.75, 3.75) {};
	\end{pgfonlayer}
	\begin{pgfonlayer}{edgelayer}
		\draw [style={yellow_cone}] (52.center)
			 to (54.center)
			 to (55.center)
			 to (53.center)
			 to cycle;
		\draw [style={red_dashed}] (41.center)
			 to [in=180, out=-90] (40.center)
			 to [in=-90, out=0] (43.center)
			 to [in=0, out=90, looseness=0.75] (42.center)
			 to [in=90, out=180, looseness=0.75] cycle;
		\draw [style={blue_open}] (30.center)
			 to [in=-165, out=-90] (31.center)
			 to [in=-135, out=15, looseness=0.75] (35.center)
			 to [in=-90, out=45] (36.center)
			 to [in=375, out=90] (37.center)
			 to [in=0, out=195, looseness=0.75] (33.center)
			 to [in=90, out=-180] (32.center)
			 to cycle;
		\draw [style={thin_dotted}] (41.center) to (44.center);
		\draw [style={thin_dotted}] (43.center) to (45.center);
		\draw [style={thin_dotted}] (36.center) to (39.center);
		\draw [style={thin_dotted}] (32.center) to (38.center);
		\draw [style={grey_open}] (65.center)
			 to [in=45, out=-15] (64.center)
			 to [in=15, out=-135] (63.center)
			 to [in=330, out=-165, looseness=0.75] (62.center)
			 to [in=-135, out=150, looseness=0.75] (61.center)
			 to [in=165, out=45, looseness=0.75] cycle;
		\draw [style={yellow_open}] (58.center)
			 to [in=15, out=150] (57.center)
			 to [in=-105, out=-165, looseness=1.25] (56.center)
			 to [in=150, out=75] (60.center)
			 to [bend left=45, looseness=0.75] (59.center)
			 to [in=-30, out=-105, looseness=0.75] cycle;
		\draw [style={thin_dotted}] (52.center) to (54.center);
		\draw [style={thin_dotted}] (53.center) to (55.center);
		\draw [style={thin_dotted}] (66.center) to (67.center);
		\draw [style={thin_dotted}] (68.center) to (69.center);
	\end{pgfonlayer}
\end{tikzpicture}}
		% \caption{\Cref{lem:axiomL}(a)}
	% \end{subfigure}
	% \hfil
	% \begin{subfigure}[b]{0.32\textwidth}
	% 	\tikzfig{tikz/cones4}
	% 	\caption{Axiom \axiomLambdaplus.}
	% \end{subfigure}
	\caption{Illustration of \Cref{lem:axiomL}(a).}
	% \caption{Illustration of the properties of ordered locales}
	\label{figure:illustration of axioms}
\end{figure}

Morphisms of ordered locales are more intricate to define than in the setting of topological spaces, because a locale morphism $f \colon X\to Y$ is a frame morphism $f^{-1} \colon \Opens Y\to \Opens X$ in the other direction. To ask that $f^{-1}$ preserves order makes little sense, as it corresponds to asking that the would-be function $f$ \emph{reflects} order. 
To remedy this we move from functions to \emph{relations}.
For a locale morphism $f \colon X\to Y$, define the relation:
\[
	R_f := \{ (U,V) \mid U\sqleq f^{-1}V\} \subseteq \Opens X \times \Opens Y.
\]
Denoting the category of sets and relations by $\Rel$, we can similarly interpret a preorder $\Leq$ on $\Opens X$ as a morphism $\Opens X \to \Opens X$. In fact, $\Rel$ is a 2-category, with a unique 2-morphism $R\Rightarrow S$ precisely when $R\subseteq S$. Finally, $\Rel$ has a \emph{dagger}, where every relation $R \colon A\to B$ gives rise to another relation $R^\dag \colon B\to A$ defined by $(b,a)\in R^\dag$ if and only if $(a,b) \in R$. 

\begin{definition}\label{definition:morphisms of ordered locales}
	Consider a morphism of locales $f\colon X\to Y$ between ordered locales. We say $f$ is \emph{upper/lower monotone} if the following 2-cells in $\Rel$ exist, respectively:	
	%A \emph{morphism of ordered locales} is a locale morphism $f\colon X\to Y$ that is \emph{monotone}, in the sense that the following two 2-cells in $\Rel$ exist:
		\begin{equation*}
			% https://q.uiver.app/?q=WzAsNCxbMCwwLCJcXE9wZW5zIFgiXSxbMSwwLCJcXE9wZW5zIFgiXSxbMCwxLCJcXE9wZW5zIFkiXSxbMSwxLCJcXE9wZW5zIFkiXSxbMCwyLCJcXHBpX2YiLDJdLFsxLDMsIlxccGlfZiJdLFswLDEsIlxcc3RyZWFtbF9YIl0sWzIsMywiXFxzdHJlYW1sX1knIiwyXSxbMSwyLCIiLDEseyJsZXZlbCI6Mn1dXQ==
			\begin{tikzcd}
				{\Opens X} & {\Opens X} \\
				{\Opens Y} & {\Opens Y}
				\arrow["{\Leq}", from=1-1, to=1-2]
				\arrow["{\Leq}"', from=2-1, to=2-2]
				\arrow["{R_f^\dagger}", from=2-1, to=1-1]
				\arrow["{R_f^\dagger}"', from=2-2, to=1-2]
				\arrow[dashed, Rightarrow, from=1-1, to=2-2]
			\end{tikzcd}
		\qquad \qquad
		% https://q.uiver.app/?q=WzAsNCxbMCwwLCJcXE9wZW5zIFgiXSxbMSwwLCJcXE9wZW5zIFgiXSxbMCwxLCJcXE9wZW5zIFkiXSxbMSwxLCJcXE9wZW5zIFkiXSxbMCwxLCJcXHN0cmVhbWxfWCJdLFsyLDMsIlxcc3RyZWFtbF9ZJyIsMl0sWzIsMCwiXFxwaV9mXlxcZGFnZ2VyIl0sWzMsMSwiXFxwaV9mXlxcZGFnZ2VyIiwyXSxbMCwzLCIiLDEseyJsZXZlbCI6Mn1dXQ==
		\begin{tikzcd}
			{\Opens X} & {\Opens X} \\
			{\Opens Y} & {\Opens Y.}
			\arrow["{R_f}"', from=1-1, to=2-1]
			\arrow["{R_f}", from=1-2, to=2-2]
			\arrow["{\Leq}", from=1-1, to=1-2]
			\arrow["{\Leq}"', from=2-1, to=2-2]
			\arrow[dashed, Rightarrow, from=1-2, to=2-1]
		\end{tikzcd}
	\end{equation*}
	We say that $f$ is \emph{monotone} if it is both upper and lower monotone.
\end{definition}

The following lemma gives a convenient way to recognise monotonicity of a locale morphism. See also \Cref{figure:illustration of monotone map of locales}.

\begin{lemma}\label{proposition:identities and axioms Lpm}\label{lemma:monotonicity unpacked}
  A map $f$ between ordered locales is upper/lower monotone if and only if, respectively:
  	\begin{itemize}
  		\item if $U \Leq U'$ and $(U,V) \in R_f$, then $(U',V') \in R_f$ and $V \Leq V'$ for some $V'$;
  		\item if $U \Leq U'$ and $(U',V') \in R_f$, then $(U,V) \in R_f$ and $V \Leq V'$ for some $V$.
  	\end{itemize}
  	Diagrammatically:
	  	\[
	  \begin{tikzcd}[every label/.append style = {font = \normalsize},column sep=0.3cm, row sep=0.35cm]
	  	{U} & {U'} \\
	  	{V} & {\exists V'}
	  	\arrow["R_f"{anchor=center}, draw=none, from=2-1, to=1-1]
	  	\arrow["R_f"{anchor=center}, draw=none, from=2-2, to=1-2]
	  	\arrow["{}"{description}, "\Leq"{anchor=center}, draw=none, from=1-1, to=1-2]
	  	\arrow["{}"{description}, "\Leq"{anchor=center}, draw=none, from=2-1, to=2-2]
	  \end{tikzcd}
	  \qquad \qquad 
	  \begin{tikzcd}[every label/.append style = {font = \normalsize},column sep=0.3cm, row sep=0.35cm]
	  	{U} & {U'} \\
	  	{\exists V} & {V'.}
	  	\arrow["R_f"{anchor=center}, draw=none, from=2-1, to=1-1]
	  	\arrow["R_f"{anchor=center}, draw=none, from=2-2, to=1-2]
	  	\arrow["{}"{description}, "\Leq"{anchor=center}, draw=none, from=1-1, to=1-2]
	  	\arrow["{}"{description}, "\Leq"{anchor=center}, draw=none, from=2-1, to=2-2]
	  \end{tikzcd}		
	  \]
\end{lemma}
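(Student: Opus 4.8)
The plan is to observe that this lemma is nothing more than an unfolding of the $2$-categorical definition of monotonicity into elementary terms. The key fact, already recorded in the excerpt, is that in $\Rel$ a $2$-cell $R \Rightarrow S$ between parallel relations exists precisely when $R \subseteq S$ as subsets. So for each of the two squares in \cref{definition:morphisms of ordered locales} the existence of the dashed $2$-cell is equivalent to a single relational inclusion, and the entire task reduces to rewriting that inclusion in pointwise form using the definitions of relational composition and of the dagger $(V,U) \in R_f^\dagger \iff (U,V) \in R_f$.

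For the upper case I would first read off the two legs of the square, both of which are relations of type $\Opens Y \to \Opens X$: composing up-the-left then along-the-top gives $\Leq \circ R_f^\dagger$, while composing along-the-bottom then up-the-right gives $R_f^\dagger \circ \Leq$. Existence of the $2$-cell is thus the inclusion $\Leq \circ R_f^\dagger \subseteq R_f^\dagger \circ \Leq$. Expanding the left-hand side, a pair $(V,U')$ lies in $\Leq \circ R_f^\dagger$ exactly when there is some $U$ with $(U,V) \in R_f$ and $U \Leq U'$; expanding the right-hand side, $(V,U')$ lies in $R_f^\dagger \circ \Leq$ exactly when there is some $V'$ with $V \Leq V'$ and $(U',V') \in R_f$. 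Quantifying the inclusion over all $(V,U')$ and reading the existentially-bound $U$ on the left as the hypothesis variable then yields verbatim the stated condition: whenever $U \Leq U'$ and $(U,V) \in R_f$, there exists $V'$ with $(U',V') \in R_f$ and $V \Leq V'$.

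Finally I would treat the lower case identically. Here the two legs are relations $\Opens X \to \Opens Y$, namely $R_f \circ \Leq$ (top then right) and $\Leq \circ R_f$ (left then bottom), the $2$-cell is the inclusion $R_f \circ \Leq \subseteq \Leq \circ R_f$, and the same pointwise expansion produces the second displayed condition. I do not anticipate any genuine obstacle, since the argument is purely bookkeeping; the only point demanding care is keeping the order of composition in $\Rel$ and the action of the dagger straight, so that the existential and universal quantifiers attach to the correct variables and the hypothesis/conclusion split comes out as claimed.
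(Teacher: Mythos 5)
Your proposal is correct and follows the paper's own proof essentially verbatim: both reduce the existence of the $2$-cell to the relational inclusion ${\Leq}\circ R_f^\dagger\subseteq R_f^\dagger\circ{\Leq}$ (respectively $R_f\circ{\Leq}\subseteq{\Leq}\circ R_f$) and then expand it pointwise using the definitions of composition and the dagger in $\Rel$. No gaps; the quantifier bookkeeping comes out exactly as in the paper.
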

\begin{proof}
	The upper monotonicity condition for $f$ says that we have an inclusion of graphs: ${\Leq}\circ R_f^\dagger\subseteq R_f^\dagger\circ{\Leq}$. Now $(V,U')$ is in the graph of  ${\Leq}\circ R_f^\dagger$ if and only if there exists $U\in \Opens X$ such that $(U,V)\in R_f$ and $U\Leq U'$. This is exactly the initial data given in the first bullet-point. The inclusion of the graphs then says $(V,U')\in R_f^\dagger\circ {\Leq}$, which holds if and only if there exists $V'\in\Opens Y$ such that $V\Leq V'$ and $(U',V')\in R_f$, precisely as desired. The condition for lower monotonicity is proved similarly.
\end{proof}

\begin{corollary}\label{corollary:identities are monotone}
	Let $X$ be a locale. A preorder $\Leq$ on $\Opens X$ satisfies the property of \cref{lem:axiomL}(a)/(b) if and only if $\id[X]$ is upper/lower monotone.
\end{corollary}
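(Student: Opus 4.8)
The plan is to unwind the definitions and reduce the statement to the unpacked monotonicity criterion of \cref{lemma:monotonicity unpacked}, whose only nontrivial input is the identification of the relation $R_{\id[X]}$ attached to the identity locale morphism. Since a locale morphism $f$ is encoded by its preimage frame morphism $f^{-1}$, and the preimage map of $\id[X] \colon X \to X$ is the identity frame morphism on $\Opens X$, I would first observe that
\[
  R_{\id[X]} = \{(U,V) \mid U \sqleq \id[X]^{-1}(V)\} = \{(U,V) \mid U \sqleq V\} = {\sqleq}.
\]
That is, the relation attached to the identity is precisely the intrinsic frame order $\sqleq$.

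With this in hand, the remainder is a direct substitution. I would apply \cref{lemma:monotonicity unpacked} to $f = \id[X]$ and replace every occurrence of $R_f$ by $\sqleq$. The upper monotonicity condition, namely that $U \Leq U'$ together with $(U,V) \in R_{\id[X]}$ imply the existence of some $V'$ with $(U',V') \in R_{\id[X]}$ and $V \Leq V'$, then reads: if $U \Leq U'$ and $U \sqleq V$, then there is $V'$ with $U' \sqleq V'$ and $V \Leq V'$. This is verbatim the property of \cref{lem:axiomL}(a). The lower case is entirely analogous: the same substitution turns the lower monotonicity condition of \cref{lemma:monotonicity unpacked} into the property of \cref{lem:axiomL}(b).

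Since \cref{lemma:monotonicity unpacked} is stated as a biconditional characterisation of upper/lower monotonicity, the resulting equivalences are immediate and the corollary follows. I expect no genuine obstacle here beyond correctly tracking the direction of the relation $R_{\id[X]}$: one must confirm it is the order $\sqleq$ itself rather than its converse, which the displayed computation settles. I note in particular that the argument uses nothing about axiom~\eqref{axiom:V}, so the equivalence holds for an \emph{arbitrary} preorder $\Leq$ on $\Opens X$, exactly as the statement requires; the role of axiom~\eqref{axiom:V} was only to guarantee, via \cref{lem:axiomL}, that the property in fact holds in every ordered locale.
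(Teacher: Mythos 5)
Your proof is correct and is exactly the argument the paper intends (the corollary is left without an explicit proof there, being an immediate consequence of \cref{lemma:monotonicity unpacked}): identify $R_{\id[X]}$ with $\sqleq$ and substitute into the unpacked monotonicity conditions to recover \cref{lem:axiomL}(a)/(b) verbatim. Your closing remark that axiom~\eqref{axiom:V} plays no role in the equivalence itself is also accurate and worth keeping.
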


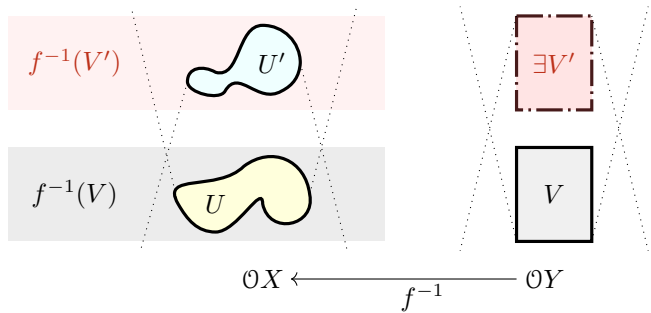
\begin{figure}[b]
	\centering
	{\tikzstyle{every picture}=[tikzfig]\begin{tikzpicture}
	\begin{pgfonlayer}{nodelayer}
		\node [style=none] (59) at (-5.25, 4.75) {};
		\node [style=none] (60) at (-5.25, 2.25) {};
		\node [style=none] (61) at (4.75, 2.25) {};
		\node [style=none] (62) at (4.75, 4.75) {};
		\node [style=none] (55) at (-5.25, 1.25) {};
		\node [style=none] (56) at (-5.25, -1.25) {};
		\node [style=none] (57) at (4.75, -1.25) {};
		\node [style=none] (58) at (4.75, 1.25) {};
		\node [style=none] (19) at (0.25, -0.25) {$U$};
		\node [style=none] (20) at (1.75, 3.5) {$U'$};
		\node [style=none] (28) at (-0.5, 3) {};
		\node [style=none] (29) at (0.5, 2.75) {};
		\node [style=none] (30) at (2, 2.75) {};
		\node [style=none] (31) at (2.5, 3.5) {};
		\node [style=none] (32) at (1.5, 4.5) {};
		\node [style=none] (33) at (0.25, 3.25) {};
		\node [style=none] (34) at (-0.75, -0.25) {};
		\node [style=none] (35) at (0.25, -1) {};
		\node [style=none] (36) at (1.5, -0.25) {};
		\node [style=none] (37) at (2.75, 0) {};
		\node [style=none] (38) at (1, 0.75) {};
		\node [style={RRed_text}] (39) at (-3.5, 3.5) {$f^{-1}(V')$};
		\node [style=none] (40) at (-3.5, 0) {$f^{-1}(V)$};
		\node [style=none] (41) at (-1.75, -1.5) {};
		\node [style=none] (42) at (3.75, -1.5) {};
		\node [style=none] (43) at (-2, 5) {};
		\node [style=none] (44) at (4, 5) {};
		\node [style=none] (45) at (8.25, 1.25) {};
		\node [style=none] (46) at (8.25, -1.25) {};
		\node [style=none] (47) at (10.25, -1.25) {};
		\node [style=none] (48) at (10.25, 1.25) {};
		\node [style=none] (49) at (8.25, 4.75) {};
		\node [style=none] (50) at (10.25, 4.75) {};
		\node [style=none] (51) at (10.25, 2.25) {};
		\node [style=none] (52) at (8.25, 2.25) {};
		\node [style={RRed_text}] (53) at (9.25, 3.5) {$\exists V'$};
		\node [style=none] (54) at (9.25, 0) {$V$};
		\node [style=none] (63) at (6.75, 5) {};
		\node [style=none] (64) at (11.75, 5) {};
		\node [style=none] (65) at (6.75, -1.5) {};
		\node [style=none] (66) at (11.75, -1.5) {};
		\node [style=none] (67) at (1.5, -2.25) {$\Opens X$};
		\node [style=none] (68) at (9, -2.25) {$\Opens Y$};
		\node [style=none] (69) at (2.25, -2.25) {};
		\node [style=none] (70) at (8.25, -2.25) {};
		\node [style=none] (71) at (5.75, -2.75) {$f^{-1}$};
	\end{pgfonlayer}
	\begin{pgfonlayer}{edgelayer}
		\draw [style={red_cone}] (60.center)
			 to (59.center)
			 to (62.center)
			 to (61.center)
			 to cycle;
		\draw [style={grey_cone}] (56.center)
			 to (55.center)
			 to (58.center)
			 to (57.center)
			 to cycle;
		\draw [style={blue_open}] (33.center)
			 to [in=90, out=150, looseness=1.25] (28.center)
			 to [bend right=75, looseness=0.75] (29.center)
			 to [in=-150, out=60, looseness=1.25] (30.center)
			 to [bend right] (31.center)
			 to [bend right=45] (32.center)
			 to [in=-15, out=-180] cycle;
		\draw [style={yellow_open}] (37.center)
			 to [bend left=90, looseness=1.75] (36.center)
			 to [in=0, out=90] (35.center)
			 to [in=300, out=-180] (34.center)
			 to [in=-135, out=120, looseness=1.25] (38.center)
			 to [bend left=60, looseness=1.25] cycle;
		\draw [style={thin_dotted}] (34.center) to (43.center);
		\draw [style={thin_dotted}] (37.center) to (44.center);
		\draw [style={thin_dotted}] (28.center) to (41.center);
		\draw [style={thin_dotted}] (31.center) to (42.center);
		\draw [style={red_dashed}] (49.center)
			 to (50.center)
			 to (51.center)
			 to (52.center)
			 to cycle;
		\draw [style={grey_open}] (46.center)
			 to (45.center)
			 to (48.center)
			 to (47.center)
			 to cycle;
		\draw [style={thin_dotted}] (46.center) to (63.center);
		\draw [style={thin_dotted}] (47.center) to (64.center);
		\draw [style={thin_dotted}] (49.center) to (65.center);
		\draw [style={thin_dotted}] (50.center) to (66.center);
		\draw [style={preimage_frames}] (70.center) to (69.center);
	\end{pgfonlayer}
\end{tikzpicture}}
	\caption{Illustration of monotonicity of locale morphisms.}
	\label{figure:illustration of monotone map of locales}
\end{figure}
	
\begin{proposition}\label{proposition:composition of maps of ordered locales}
  Ordered locales and monotone maps form a category $\OrdLoc$.
\end{proposition}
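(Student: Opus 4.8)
The plan is to exploit that $\Loc$ is already a category: composition and identities in $\OrdLoc$ are by definition those of $\Loc$, so associativity and the unit laws are inherited verbatim once we know that the relevant arrows live in $\OrdLoc$. Thus only two things require proof: (i) every identity $\id[X]$ is monotone, and (ii) the composite of two monotone maps is again monotone. Point (i) I would dispatch immediately by combining \cref{corollary:identities are monotone} with \cref{lem:axiomL}: the latter says the preorder $\Leq$ of \emph{any} ordered locale satisfies properties (a) and (b), and the former then yields that $\id[X]$ is both upper and lower monotone. So the real content is (ii).

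The key step is to show that $f \mapsto R_f$ is compatible with composition, namely that
\[
  R_{g \circ f} = R_g \circ R_f
\]
as relations $\Opens X \to \Opens Z$, for composable locale maps $f \colon X \to Y$ and $g \colon Y \to Z$. Unwinding definitions, $(U,W) \in R_{g\circ f}$ means $U \sqleq (g\circ f)^{-1}W = f^{-1}(g^{-1}W)$, whereas $(U,W) \in R_g \circ R_f$ means there is some $V \in \Opens Y$ with $U \sqleq f^{-1}V$ and $V \sqleq g^{-1}W$. For the inclusion $\supseteq$ I would use that $f^{-1}$ preserves $\sqleq$ (being a frame morphism), giving $U \sqleq f^{-1}V \sqleq f^{-1}(g^{-1}W)$; for $\subseteq$ I would take the witness $V := g^{-1}W$ and invoke reflexivity of $\sqleq$. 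Applying the dagger, which reverses composition, then yields $R_{g\circ f}^\dagger = R_f^\dagger \circ R_g^\dagger$.

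With functoriality in hand, monotonicity of a composite is a purely $2$-categorical calculation in $\Rel$. Recalling from the proof of \cref{lemma:monotonicity unpacked} that upper monotonicity of $f$ is the inclusion ${\Leq}\circ R_f^\dagger \subseteq R_f^\dagger \circ {\Leq}$, I would compute
\[
  {\Leq}\circ R_{g\circ f}^\dagger
  = {\Leq}\circ R_f^\dagger \circ R_g^\dagger
  \subseteq R_f^\dagger \circ {\Leq} \circ R_g^\dagger
  \subseteq R_f^\dagger \circ R_g^\dagger \circ {\Leq}
  = R_{g\circ f}^\dagger \circ {\Leq},
\]
where the two inclusions apply upper monotonicity of $f$ and of $g$ respectively, whiskered by the remaining relations (legitimate since relational composition is monotone for inclusion, $\Rel$ being a $2$-category). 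Dually, unpacking lower monotonicity as $R_f \circ {\Leq} \subseteq {\Leq}\circ R_f$, the analogous chain $R_{g\circ f}\circ{\Leq} = R_g \circ R_f \circ {\Leq} \subseteq R_g \circ {\Leq}\circ R_f \subseteq {\Leq}\circ R_g \circ R_f = {\Leq}\circ R_{g\circ f}$ establishes lower monotonicity of $g \circ f$.

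I expect the main obstacle to be nothing more than pinning down the functoriality identity $R_{g\circ f} = R_g \circ R_f$ accurately: getting the direction of each of the two inclusions right and keeping track of the contravariance of the dagger. Once that is secured, the verification that $\OrdLoc$ is a category reduces entirely to the formal manipulation of $2$-cells displayed above, together with the inheritance of associativity and units from $\Loc$.
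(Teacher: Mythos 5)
Your proposal is correct and follows essentially the same route as the paper: identities are handled via \cref{corollary:identities are monotone} (together with \cref{lem:axiomL}), and closure under composition via the functoriality identity $R_{g\circ f}=R_g\circ R_f$ combined with whiskering of the monotonicity $2$-cells in $\Rel$, which is exactly what the paper's appeal to the interchange law amounts to. The only difference is that you spell out the two inclusions establishing $R_{g\circ f}=R_g\circ R_f$ and the pasting computation explicitly, where the paper leaves them as an easy verification.
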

\begin{proof}	
	\Cref{corollary:identities are monotone} tells us identity morphisms of locales are monotone, so we are left to show that if $f \colon X \to Y$ and $g \colon Y \to Z$ are monotone locale morphisms, then their composition $g\circ f$ is again monotone.
	This follows easily from $R_{g\circ f}= R_g\circ R_f$ and using the interchange law for horizontal and vertical composition in $\Rel$.
\end{proof}

\Cref{section:ordered topological spaces} showed that an ordered space $S$ has well-behaved cones $U \mapsto \up U$ and $U \mapsto \down U$ on its powerset. These completely determine the order $\leq$ because $x \leq y$ if and only if $y \in \up \{x\}$ if and only if $x \in \down \{y\}$. But this relies heavily on the availability of singleton sets. Can we define a pointless analogue of cones that captures some of the properties of $\Leq$? The following results answer this question positively.

Suppose $X$ is a locale with a preorder $\Leq$ on its frame of opens, not necessarily satisfying axiom~\eqref{axiom:V} of an ordered locale. For $U \in \Opens X$, define \emph{localic cones}:
\begin{align*}
	\Up U & = \bigvee \{ V\in\Opens X \mid U\Leq V\},  \\
	\Down U & = \bigvee \{ W\in\Opens X \mid W \Leq U\}.
\end{align*}

\begin{lemma}\label{lemma:properties of localic cones}
	For a locale $X$ and a preorder $\Leq$ on its frame of opens:
	\begin{enumerate}
		\item[(a)] if $U\Leq V$ then $U\sqleq \Down V$ and $V \sqleq \Up U$;
		\item[(b)] $U\sqleq \Up U$ and $U \sqleq \Down U$.
	\end{enumerate}
	If axiom \eqref{axiom:V} is satisfied, then furthermore:
	\begin{enumerate}
		\item[(c)] $U \Leq \Up U$ and $\Down U \Leq U$; 
		\item[(d)] $\Up \Up U = \Up U$ and $\Down \Down U = \Down U$;
		\item[(e)] if $U\sqleq V$ then $\Up U\sqleq \Up V$ and $\Down U\sqleq \Down V$. 
	\end{enumerate}
\end{lemma}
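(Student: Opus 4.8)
The plan is to treat the five items in order, bootstrapping the later parts from the earlier ones, and to isolate the two facts that do all the work: the definition of $\Up U$ and $\Down U$ as joins, and axiom~\eqref{axiom:V}. For (a) I would simply observe that when $U \Leq V$, the open $V$ is one of the joinands in the definition of $\Up U$, so $V \sqleq \Up U$; dually $U$ is a joinand of $\Down V$, giving $U \sqleq \Down V$. This uses nothing beyond the universal property of a supremum. Part (b) is then immediate: reflexivity of the preorder gives $U \Leq U$, and feeding this into (a) yields $U \sqleq \Up U$ and $U \sqleq \Down U$. Notably, neither (a) nor (b) needs axiom~\eqref{axiom:V}.

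For (c) I would apply~\eqref{axiom:V} to a \emph{constant} family. Indexing over $I = \{ V \mid U \Leq V \}$ and taking $U_V := U$ and $V_V := V$, one has $U_V \Leq V_V$ for every $V \in I$, so~\eqref{axiom:V} gives $\bigvee_{V \in I} U \Leq \bigvee_{V \in I} V = \Up U$. The step I must not skip is that $I$ is \emph{nonempty} — it contains $U$ by reflexivity — so the left-hand join collapses to $U$, yielding $U \Leq \Up U$. The statement $\Down U \Leq U$ is proved symmetrically with the constant family placed on the right.

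For (d) I would combine (b), (c), and transitivity of $\Leq$. The inclusion $\Up U \sqleq \Up \Up U$ is just (b) applied to $\Up U$. For the reverse, take any $V$ with $\Up U \Leq V$; since $U \Leq \Up U$ by (c) and $\Leq$ is transitive, $U \Leq V$, whence $V \sqleq \Up U$ by (a). As every joinand of $\Up \Up U$ is thus below $\Up U$, we get $\Up \Up U \sqleq \Up U$ and hence equality, with the down-cone case dual via $\Down U \Leq U$. For (e) I would invoke the already-established \cref{lem:axiomL}: given $U \sqleq V$, combine it with $U \Leq \Up U$ from (c), so that \cref{lem:axiomL}(a) produces some $V'$ with $V \Leq V'$ and $\Up U \sqleq V'$; then $V' \sqleq \Up V$ by (a), giving $\Up U \sqleq \Up V$. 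The down-cone inequality is symmetric, using $\Down U \Leq U$ together with \cref{lem:axiomL}(b).

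I expect no serious obstacle, since the lemma is essentially the localic shadow of \cref{lem:cones} and each clause reduces to one of the moves above. The two places that genuinely demand care are the constant-family application in (c) — in particular remembering that it is reflexivity which makes the index set nonempty, so that the join does not collapse to the bottom element — and ensuring that (d) really uses transitivity (and hence (c)), rather than merely (b), for the nontrivial inclusion.
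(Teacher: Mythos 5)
Your proof is correct and follows essentially the same route as the paper's: (a) and (b) from the join definition and reflexivity, (c) via axiom~\eqref{axiom:V} applied to a constant family (you usefully make explicit the nonemptiness point the paper leaves implicit), (d) by combining (a)--(c) with transitivity, and (e) via \cref{lem:axiomL}. The only cosmetic difference is in (e), where you apply \cref{lem:axiomL}(a) once to $U \Leq \Up U$ rather than to each joinand $W$ of $\Up U$ separately; both instantiations work.
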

\begin{proof}
  Points (a) and (c) follow immediately from the definition of localic cones, and point (b) from reflexivity of $\Leq$. 
  For (d): applying (c) twice gives ${U \Leq \Up U \Leq \Up \Up U}$, and then applying (a) gives $\Up \Up U \sqleq \Up U$; a similar argument holds for localic downsets. Lastly, for (e), if $U\sqleq V$ and $U\Leq W$ then from \cref{lem:axiomL}(a) we get $V\Leq V'$ for some $W\sqleq V'$, and hence $W\sqleq \Up V$. Thus we get $\Up U\sqleq \Up V$; again, a similar argument holds for localic downsets.
\end{proof}

In direct analogy to \cref{proposition:monotone iff commutes with cones}, we can now determine the monotonicity of morphisms using the localic cones, justifying \cref{definition:morphisms of ordered locales}.

\begin{proposition}\label{proposition:monotone iff commutes with localic cones}
	A locale morphism $f \colon X \to Y$ between ordered locales is monotone if and only if for all $V\in\Opens Y$:
	\[
		\Up f^{-1}(V)\sqleq f^{-1}\left(\Up V\right)
		\quad\text{and}\quad
		\Down f^{-1}(V)\sqleq f^{-1}\left(\Down V\right).
	\]
\end{proposition}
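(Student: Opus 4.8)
The plan is to decouple the biconditional into two matched equivalences, in direct parallel with the point-set statement \cref{proposition:monotone iff commutes with cones}: I would show that $f$ is upper monotone if and only if $\Up f^{-1}(V) \sqleq f^{-1}(\Up V)$ for all $V \in \Opens Y$, and (dually) that $f$ is lower monotone if and only if $\Down f^{-1}(V) \sqleq f^{-1}(\Down V)$ for all $V$. Since ``monotone'' means both upper and lower monotone, and the displayed condition is precisely the conjunction of the two cone inequalities, the proposition follows by combining these. Throughout I would work with the unpacked reformulation of monotonicity from \cref{lemma:monotonicity unpacked}, the properties of localic cones in \cref{lemma:properties of localic cones}, and the lifting property \cref{lem:axiomL}, using repeatedly that $f^{-1}$, being a frame morphism, preserves the order $\sqleq$.

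For the upper case, forward direction, assume $f$ is upper monotone and fix $V$. Since $\Up f^{-1}(V)$ is by definition the join of all $W$ with $f^{-1}(V) \Leq W$, it suffices to bound each such $W$ above by $f^{-1}(\Up V)$. Applying \cref{lemma:monotonicity unpacked} to the data $f^{-1}(V) \Leq W$ and the tautological membership $(f^{-1}(V), V) \in R_f$ produces some $V'$ with $(W, V') \in R_f$ and $V \Leq V'$; then $V' \sqleq \Up V$ by \cref{lemma:properties of localic cones}(a), so $W \sqleq f^{-1}(V') \sqleq f^{-1}(\Up V)$, and taking the join over all such $W$ closes this direction.

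For the converse, the key move is to choose the canonical witness $V' := \Up V$, for which $V \Leq \Up V$ holds by \cref{lemma:properties of localic cones}(c) (this is where axiom~\eqref{axiom:V} enters). Given $U \Leq U'$ and $(U, V) \in R_f$, that is $U \sqleq f^{-1}(V)$, I would apply \cref{lem:axiomL}(a) to transport the comparison $U \Leq U'$ across $U \sqleq f^{-1}(V)$, obtaining some $V''$ with $f^{-1}(V) \Leq V''$ and $U' \sqleq V''$. By the definition of $\Up$ this gives $U' \sqleq V'' \sqleq \Up f^{-1}(V)$, and the hypothesis $\Up f^{-1}(V) \sqleq f^{-1}(\Up V)$ then yields $U' \sqleq f^{-1}(\Up V)$, i.e. $(U', \Up V) \in R_f$, as \cref{lemma:monotonicity unpacked} demands. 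The lower case is entirely dual, using \cref{lemma:properties of localic cones}(a),(c) and \cref{lem:axiomL}(b), with the witness $V := \Down V'$.

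The main obstacle — really the only delicate point — is the converse direction, where one must manufacture the $R_f$-membership out of the cone inequality. This requires correctly threading \cref{lem:axiomL} to move the comparison $U \Leq U'$ past the relation $R_f$ into a statement about $\Up f^{-1}(V)$ \emph{before} invoking the hypothesis; the two forward directions, by contrast, are routine once one unwinds the join defining the localic cone. Pinning down the canonical witnesses $\Up V$ and $\Down V'$, whose validity rests on axiom~\eqref{axiom:V} through \cref{lemma:properties of localic cones}(c), is what makes the bookkeeping go through cleanly.
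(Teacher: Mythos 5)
Your proposal is correct and follows essentially the same route as the paper's proof: the same decomposition into upper and lower halves, the same unpacked monotonicity criterion, and the same canonical witness $\Up V$ (justified by \cref{lemma:properties of localic cones}(c)) in the converse direction. The only cosmetic differences are that in the forward direction you bound each joinand $W$ of $\Up f^{-1}(V)$ separately rather than applying monotonicity once to $f^{-1}(V)\Leq\Up f^{-1}(V)$, and in the converse you invoke \cref{lem:axiomL}(a) directly where the paper packages the same step as \cref{lemma:properties of localic cones}(e).
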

\begin{proof}
			Assume first that $f$ is upper monotone. Using \cref{lemma:properties of localic cones}(c) we get the relation $f^{-1}(V)\Leq \Up f^{-1}(V)$, and $(f^{-1}(V),V)\in R_f$ trivially holds, so there exists $V'\in \Opens Y$ such that $V\Leq V'$ and $(\Up f^{-1}(V),V')\in R_f$. Using \cref{lemma:properties of localic cones}(a) to unpack this, we get 
				\begin{equation*}
					\Up f^{-1}(V)\sqleq f^{-1}(V')\sqleq f^{-1}\left(\Up V\right).
				\end{equation*}
						
			Conversely, suppose that the stated inclusion holds. We show that $f$ is upper monotone, for which we use the characterisation in \cref{lemma:monotonicity unpacked}. So take $U\Leq U'$ and $(U,V)\in R_f$. Then $V\Leq \Up V$ by \cref{lemma:properties of localic cones}(c), and further
				\[
					U'\sqleq \Up U \sqleq \Up f^{-1}(V) \sqleq f^{-1}\left(\Up V\right)
				\]
				follows by \cref{lemma:properties of localic cones}(a) and (e), together with the assumption, giving ${(U',\Up V')\in R_f}$, as desired. The proof for lower monotonicity is analogous.
\end{proof}

% The following shows that \cref{definition:morphisms of ordered locales} captures the monotonicity of direct images of functions in the correct way:
	
% 	\begin{proposition}
% 		\label{proposition:open monotone maps}
% 		A map of locales $f:(X,\Leq)\to (Y,\Leq')$ admitting a left adjoint $f_!\dashv f^{-1}$ is monotone if and only if $U\Leq V$ implies $f_!U\Leq' f_! V$. 
		
% 		\begin{proof}
% 			\todo{Decide if we want to include this or not. It needs \textbf{(L$\bm{\Leq}$)} or \axiomLambdapm.}
% 		\end{proof}
% 	\end{proposition}

% \todo{
% \begin{example}
% 		Some examples of ordered locales without points would be very nice. See e.g. \cite[Example C1.2.8]{johnstone2002elephant} for the locale of ``surjections'' $\mathbb{N}\to\mathbb{R}$.
% \end{example}
% }

\begin{example}\label{example:ordered sublocales}
	If $j\colon Y \rightarrowtail X$ is a \emph{sublocale}~\cite[Section IX.4]{maclane1994sheaves} of an ordered locale $X$, then $Y$ gets the structure of an ordered locale by endowing $\Opens Y$ with the largest preorder $\Leq_j$ that makes $j$ monotone. Using~\eqref{axiom:V}, we can explicitly write:
	\[
			A\Leq_j B 
			\quad\iff\quad
			\forall U \in R_j(A) \colon \Up U \in R_j(B)
			\;\;\text{ and }\;\;
			\forall V \in R_j(B) \colon \Down V \in R_j(A).
%					\begin{aligned}
%						(A,U)\in R_j \text{ implies } (B,\Up U)\in R_j,\\
%						(B,V)\in R_j \text{ implies } (A,\Down V)\in R_j.
%					\end{aligned}
	\]
	Here $R_j(A)=\lbrace U\in \Opens X:(A,U)\in R_j\rbrace$ is the image of $A$ under the relation $R_j$. The preorder $\Leq_j$ inherits axiom~\eqref{axiom:V} from $X$.
	 
	If the frame map $j^{-1}$ admits a left adjoint $j_!\dashv j^{-1}$, \emph{e.g.} when $j$ is \emph{open}, then:
	\[
	 	A\Leq_j B \text{ in $Y$}
	 	\qquad\iff\qquad
	 	j_! A\Leq j_! B \text{ in $X$.}
	\]
\end{example}
 
\begin{example}
	The general construction of~\cref{example:ordered sublocales} generates many specific examples of ordered locales that have no points. For instance, take any ordered space $S$ that is Hausdorff and has no \emph{isolated points}, that is, none of whose singletons are open. In that case the \emph{double negation} (Boolean) sublocale $(\Opens S)_{\neg\neg}$ of $\Opens S$ has no points, but it becomes an ordered locale with the preorder inherited from $S$. 
\end{example}

\section{Opens}\label{sec:opens}

This section discusses turning an ordered space into an ordered locale by taking its opens. 
Given a topological space $S$, the locale $\Opens S$ is defined to be the underlying frame of open subsets of $S$. 
Given a continuous function $g \colon S\to T$, a locale morphism $\Opens g \colon \Opens S \to \Opens T$, that is, a frame morphism $\Opens T\to \Opens S$, is defined by $\Opens g (V) = g^{-1}(V) \in \Opens S $ for $V \in \Opens T$.
This constitutes the functor:
\[
	\Opens \colon \Top \longrightarrow \Loc.
\]

We investigate how this functor interacts with order structures on spaces. First, we extend it on objects. As discussed at the start of \cref{section:ordered locales}, we consider three different ways of doing this, which we recall here.

\begin{definition}\label{construction:order on Loc}
	For open subsets $U$ and $V$ of an ordered space $S$, define:
		\begin{itemize}
			\item the \emph{upper order:} $U\LeqU V$ if and only if $V\subseteq \up U$;
			\item the \emph{lower order:} $U\LeqL V$ if and only if $U\subseteq \down V$;
			\item the \emph{Egli-Milner order}: $U\LeqEM V$ if and only if $U\LeqU V$ and $U\LeqL V$.
		\end{itemize}
	For reference, see~\cite[Section~11.1]{vickers1996topology}. \cref{figure:illustration of causal order on opens} gives visual intuition.

	%Note that, since we only consider open subsets of $S$, we even get
			% \begin{equation*}
			% 	U\trianglelefteqslant_S V 
			% 	\qquad\text{if and only if}\qquad
			% 	\begin{aligned}
			% 		&U\subseteq I^-_S(V),\\
			% 		&V\subseteq I^+_S(U).
			% 	\end{aligned}
			% \end{equation*}
\end{definition}

\begin{lemma}\label{lemma:ordered spaces are ordered locales}
  If $(S,\leq)$ is an ordered topological space, then $\Opens S$ is an ordered locale under $\LeqU$, $\LeqL$, and $\LeqEM$.
\end{lemma}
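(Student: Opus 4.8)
The plan is to check, for each of the three orders $\LeqU$, $\LeqL$, and $\LeqEM$ separately, the two conditions required by \cref{definition:ordered locales}: that the relation is a preorder (reflexive and transitive), and that it satisfies axiom~\eqref{axiom:V}. Throughout I would use that in the frame $\Opens S$ the join $\bigvee$ is just union of open sets and the intrinsic order $\sqleq$ is inclusion, so that every verification reduces to a manipulation of cones governed by \cref{lem:cones}.

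For the upper order, reflexivity $U \LeqU U$ is exactly the inclusion $U \subseteq \up U$ from \cref{lem:cones}(a). Transitivity is where the idempotence of cones is essential: from $V \subseteq \up U$ and $W \subseteq \up V$ I would apply monotonicity (\cref{lem:cones}(c)) and then $\up\up U = \up U$ (\cref{lem:cones}(b)) to obtain $W \subseteq \up\up U = \up U$. Axiom~\eqref{axiom:V} then follows from the fact that cones commute with unions (\cref{lem:cones}(d)): if $V_i \subseteq \up U_i$ for all $i$, then $\bigcup V_i \subseteq \bigcup \up U_i = \up(\bigcup U_i)$. The lower order is handled by the mirror-image argument, replacing $\up$ by $\down$ throughout.

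Finally, the Egli-Milner order is by definition the conjunction of the previous two, so it inherits everything: the intersection of two preorders is again a preorder, and if $U_i \LeqEM V_i$ for all $i$, then applying axiom~\eqref{axiom:V} to each of $\LeqU$ and $\LeqL$ separately yields $\bigvee U_i \LeqU \bigvee V_i$ and $\bigvee U_i \LeqL \bigvee V_i$, whence $\bigvee U_i \LeqEM \bigvee V_i$. I do not expect any genuine obstacle: the whole statement is bookkeeping on top of \cref{lem:cones}, and the only step that uses more than plain monotonicity is the transitivity argument, which turns on the idempotence $\up\up U = \up U$ and $\down\down U = \down U$.
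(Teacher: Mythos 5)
Your proposal is correct and follows essentially the same route as the paper: preorder properties from \cref{lem:cones}(a)--(c), and axiom~\eqref{axiom:V} from the fact that cones commute with unions (\cref{lem:cones}(d)), with the Egli--Milner case inherited from the upper and lower cases. You simply spell out the reflexivity and transitivity checks that the paper leaves implicit.
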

\begin{proof}
  That $\LeqU$, $\LeqU$, and $\LeqEM$ are preorders on $\Opens S$ follows from \cref{lem:cones}. 
  To verify axiom~\eqref{axiom:V}, suppose that $U_i \LeqL V_i$ for all $i$ ranging over some index set. That means $U_i \subseteq \down V_i$ for all indices $i$. For a fixed index $j$, then $U_j \subseteq \down V_j \subseteq \down \bigcup V_i = \bigcup \down V_i$ by \cref{lem:cones}(d). Therefore $\bigcup U_i \subseteq \down \bigcup V_i$, giving $\bigcup U_i \LeqL \bigcup V_i$. The proof for $\LeqU$ is similar, and it follows directly that $\LeqEM$ must also satisfy~\eqref{axiom:V}.
\end{proof}

This will allow us to define three types of functors $\Opens \colon \OrdTop \to \OrdLoc$, for which the above describes the object component. Before moving on to morphisms, let us look at some more detail at the image of this functor. In the following, the preorder $\Leq$ on the opens of a topological space will always assumed to be the Egli-Milner order $\LeqEM$. The others will be discussed further in \cref{section:upper-lower adjunctions}.

\begin{lemma}\label{lem:Leqfororderedspaces}
  For an open subset $U$ of an ordered space $S$:
  \[
    \Up U = (\up U)\interior
    \qquad \text{ and } \qquad
    \Down U = (\down U)\interior.
  \]
  Therefore, in $\Opens S$:
  \[
    U \Leq V
    \qquad \iff \qquad
    U \subseteq \Down V 
    \text{ and }
    V \subseteq \Up U.
  \]
\end{lemma}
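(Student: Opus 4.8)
The plan is to compute the localic cones $\Up U$ and $\Down U$ directly from their definition as suprema, and then read off the characterisation of $\Leq = \LeqEM$ as an immediate corollary. Throughout I use that in the frame $\Opens S$ the join $\bigvee$ is just the union of open sets, so that $\Up U = \bigcup \{ V \in \Opens S \mid U \LeqEM V\}$ and dually for $\Down U$.

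First I would establish $\Up U = (\up U)\interior$ by proving two inclusions. For the inclusion $\sqleq$, every $V$ appearing in the defining supremum satisfies $V \subseteq \up U$ by the definition of $\LeqU$; since each such $V$ is open, in fact $V \subseteq (\up U)\interior$, and taking the union gives $\Up U \sqleq (\up U)\interior$. For the reverse inclusion $\sqgeq$, it suffices to show that the open set $(\up U)\interior$ itself occurs in the supremum, that is, that $U \LeqEM (\up U)\interior$. The condition $(\up U)\interior \subseteq \up U$ is immediate, so the only genuine thing to verify is $U \subseteq \down (\up U)\interior$. Here the key observation is that, since $U$ is open and $U \subseteq \up U$ by \cref{lem:cones}(a), we have $U = U\interior \subseteq (\up U)\interior$; combined with reflexivity of $\leq$ this yields $U \subseteq \down (\up U)\interior$, as required. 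The identity $\Down U = (\down U)\interior$ follows by a symmetric argument, and I would note that no open cone assumption is needed anywhere.

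Finally I would deduce the stated equivalence. Substituting the two computed cones, the right-hand side reads $U \subseteq (\down V)\interior$ and $V \subseteq (\up U)\interior$. For the forward direction, $U \LeqEM V$ gives $V \subseteq \up U$ and $U \subseteq \down V$ by definition, and openness of $U$ and $V$ upgrades these to containments in the respective interiors. For the converse, $U \subseteq (\down V)\interior \subseteq \down V$ and $V \subseteq (\up U)\interior \subseteq \up U$ are exactly the two clauses defining $\LeqEM$. This is precisely the remark, made when $\LeqEM$ was introduced, that for open sets the Egli-Milner order can be expressed using chronological rather than causal cones.

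The arguments are essentially routine unwindings of definitions; the only genuinely substantive point is the reverse inclusion $(\up U)\interior \sqleq \Up U$, where one must check that $(\up U)\interior$ is an admissible witness in the supremum, and in particular that $U \subseteq \down (\up U)\interior$. Everything else follows mechanically once the two cone identities are in place.
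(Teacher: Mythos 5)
Your proposal is correct and follows essentially the same route as the paper: both arguments hinge on showing that $(\up U)\interior$ is itself a witness in the supremum defining $\Up U$, i.e.\ that $U \Leq (\up U)\interior$, via the observation that $U = U\interior \subseteq (\up U)\interior \subseteq \down(\up U)\interior$, and both obtain the easy inclusion $\Up U \sqleq (\up U)\interior$ from openness of the witnesses $V \subseteq \up U$. The deduction of the characterisation of $\Leq$ is likewise the same immediate substitution.
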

\begin{proof}
  First, we show that:
  \[
		(\down U)\interior \Leq U
		\qquad\text{ and }\qquad
		U \Leq (\up U)\interior.
  \]
	By \cref{construction:order on Loc}, $U \Leq (\up U)\interior$ if and only if $(\up U)\interior \subseteq \up U$, which is vacuously true, and $U\subseteq \down (\up U)\interior$. The latter follows from two applications of \Cref{lem:cones}(a). Similarly $(\down U)\interior \Leq U$.

  Next,	we prove that $\Up U = (\up U)\interior$; the downset-version is similar.
  First, take an element $x\in \Up U$, meaning there exists an open neighbourhood $W\in \Opens S$ of $x$ such that $U\Leq W$. By \cref{construction:order on Loc} this just means that $x\in W\subseteq (\up U)\interior$, and so $\Up U \subseteq (\up U)\interior$.
	Conversely, take an element $x\in (\up U)\interior$. Now $U \Leq (\up U)\interior$, from which it is immediately clear that $x\in \Up U$. Thus $\Up U = (\up U)\interior$

  The stated characterisation of $U \Leq V$ follows immediately.
\end{proof}

Now consider morphisms. The following proposition shows that the open cone condition is precisely what is needed to make the functor $\Opens$ well defined on morphisms. 
% Write $\Powerset S$ for the powerset of a set $S$.

% \begin{lemma}
%   A function $g \colon S \to T$ between preordered sets is monotone if and only if the following two 2-cells exist in $\Rel$:
% 	\[
% 		% https://q.uiver.app/?q=WzAsNCxbMCwwLCJcXE9wZW5zIFgiXSxbMSwwLCJcXE9wZW5zIFgiXSxbMCwxLCJcXE9wZW5zIFkiXSxbMSwxLCJcXE9wZW5zIFkiXSxbMCwyLCJcXHBpX2YiLDJdLFsxLDMsIlxccGlfZiJdLFswLDEsIlxcc3RyZWFtbF9YIl0sWzIsMywiXFxzdHJlYW1sX1knIiwyXSxbMSwyLCIiLDEseyJsZXZlbCI6Mn1dXQ==
% 		\begin{tikzcd}
% 			{\Powerset S} & {\Powerset S} \\
% 			{\Powerset T} & {\Powerset T}
% 			\arrow["{R_g}"', from=1-1, to=2-1]
% 			\arrow["{R_g}", from=1-2, to=2-2]
% 			\arrow["{\Leq}", from=1-1, to=1-2]
% 			\arrow["{\Leq}"', from=2-1, to=2-2]
% 			\arrow[Rightarrow, from=1-2, to=2-1]
% 		\end{tikzcd}
% 		\qquad\qquad
% 		% https://q.uiver.app/?q=WzAsNCxbMCwwLCJcXE9wZW5zIFgiXSxbMSwwLCJcXE9wZW5zIFgiXSxbMCwxLCJcXE9wZW5zIFkiXSxbMSwxLCJcXE9wZW5zIFkiXSxbMCwxLCJcXHN0cmVhbWxfWCJdLFsyLDMsIlxcc3RyZWFtbF9ZJyIsMl0sWzIsMCwiXFxwaV9mXlxcZGFnZ2VyIl0sWzMsMSwiXFxwaV9mXlxcZGFnZ2VyIiwyXSxbMCwzLCIiLDEseyJsZXZlbCI6Mn1dXQ==
% 		\begin{tikzcd}
% 			{\Powerset S} & {\Powerset S} \\
% 			{\Powerset T} & {\Powerset T}
% 			\arrow["{\Leq}", from=1-1, to=1-2]
% 			\arrow["{\Leq}"', from=2-1, to=2-2]
% 			\arrow["{R_g^\dagger}", from=2-1, to=1-1]
% 			\arrow["{R_g^\dagger}"', from=2-2, to=1-2]
% 			\arrow[Rightarrow, from=1-1, to=2-2]
% 		\end{tikzcd}
%  	\]
% \end{lemma}

\begin{proposition}\label{proposition:open cones iff maps into codomain are monotone}
  The following are equivalent for an ordered space $T$:
	\begin{enumerate}
		\item[(a)] $T$ has open cones;
		\item[(b)] $\Opens g$ is monotone for any continuous monotone function $g \colon S \to T$;
		\item[(c)] $\Opens g$ is monotone for any monotone $g \colon \{ 0<1 \} \to T$.
	\end{enumerate}
\end{proposition}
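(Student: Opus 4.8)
The plan is to prove the cycle of implications (a) $\Rightarrow$ (b) $\Rightarrow$ (c) $\Rightarrow$ (a). The two workhorses throughout will be \Cref{proposition:monotone iff commutes with localic cones}, which reduces monotonicity of $\Opens g$ to the pair of inclusions $\Up g^{-1}(V)\sqleq g^{-1}(\Up V)$ and $\Down g^{-1}(V)\sqleq g^{-1}(\Down V)$, and \Cref{lem:Leqfororderedspaces}, which computes the localic cones of an ordered space as $\Up U=(\up U)\interior$ and $\Down U=(\down U)\interior$. For (a) $\Rightarrow$ (b), let $g\colon S\to T$ be continuous monotone with $T$ having open cones (note $S$ need not). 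By \Cref{proposition:monotone iff commutes with localic cones} it suffices to check the two cone inclusions. On the domain side, \Cref{lem:Leqfororderedspaces} gives $\Up g^{-1}(V)=(\up g^{-1}(V))\interior\subseteq \up g^{-1}(V)$, and monotonicity of $g$ yields $\up g^{-1}(V)\subseteq g^{-1}(\up V)$ via \Cref{proposition:monotone iff commutes with cones}. On the codomain side, the open cone hypothesis on $T$ collapses the interior, $\Up V=(\up V)\interior=\up V$, so $g^{-1}(\Up V)=g^{-1}(\up V)$; chaining these gives the upper inclusion, and the dual argument (using \Cref{proposition:monotone iff commutes with cones}(c) and open lower cones on $T$) gives the lower one. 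The conceptual point is that the interior on the \emph{domain} only shrinks sets and so helps, whereas the open cone condition is needed precisely on the \emph{codomain} to remove the interior there.

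The implication (b) $\Rightarrow$ (c) is immediate: I equip $\{0<1\}$ with the discrete topology, under which it is an ordered space (with open cones) and every function out of it is continuous. Hence any monotone $g\colon\{0<1\}\to T$ is a continuous monotone map, so (b) applies to yield that $\Opens g$ is monotone.

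The crux is (c) $\Rightarrow$ (a), where the two-point space plays the role of a generic ``pair of comparable points''. I would verify the pointwise criterion of \Cref{proposition:open cones iff internal cones}. Fix $s\leq t$ in $T$ and an open neighbourhood $U$ of $s$; the goal is $t\in(\up U)\interior$. Define $g\colon\{0<1\}\to T$ (discrete domain) by $g(0)=s$ and $g(1)=t$, which is monotone since $s\leq t$, so by (c) the inequality $\Up g^{-1}(U)\sqleq g^{-1}(\Up U)$ holds. Because $\{0<1\}$ is discrete, all cones are open and hence $\Up W=\up W$ for every $W\in\Opens\{0<1\}$ by \Cref{lem:Leqfororderedspaces}. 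Since $s\in U$ we have $0\in g^{-1}(U)$, so $1\in\up\{0\}\subseteq\up g^{-1}(U)=\Up g^{-1}(U)\subseteq g^{-1}(\Up U)=g^{-1}((\up U)\interior)$, which says exactly $t=g(1)\in(\up U)\interior$. This yields open upper cones; the symmetric argument using the downset inequality, applied to an open neighbourhood of $t$, yields open lower cones.

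I expect no serious obstacle, only careful bookkeeping: confirming that $\sqleq$ is ordinary inclusion in the spatial frames at hand, that localic cones in discrete $\{0<1\}$ reduce to set-theoretic cones, and—most importantly—keeping straight the asymmetry that the open cone condition is \emph{used on} the codomain in (a) $\Rightarrow$ (b) but \emph{deduced for} the codomain in (c) $\Rightarrow$ (a).
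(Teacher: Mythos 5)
Your proof is correct and follows essentially the same route as the paper's: the same cycle (a)$\Rightarrow$(b)$\Rightarrow$(c)$\Rightarrow$(a), the same inclusion chain for (a)$\Rightarrow$(b) built from \cref{proposition:monotone iff commutes with cones} together with the open-cone collapse of the interior on the codomain, and the same two-point discrete space trick for (c)$\Rightarrow$(a). The only real difference is that you certify monotonicity of $\Opens g$ via the localic-cone criterion of \cref{proposition:monotone iff commutes with localic cones} rather than the relational characterisation of \cref{lemma:monotonicity unpacked} that the paper uses directly; the two are interchangeable by the paper's own results, so this is a cosmetic rerouting rather than a different argument.
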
		
\begin{proof}
  To see that (a) implies (b), we need to show that
	\begin{itemize}
	  \item if $U \Leq U'$ and $U' \sqsubseteq g^{-1} V'$, then $V \Leq V'$ and $U \sqsubseteq g^{-1} V$ for some $V$;
	  \item if $U \Leq U'$ and $U \sqsubseteq g^{-1} V$, then $V \leq V'$ and $U' \sqsubseteq g^{-1} V'$ for some $V'$;
	\end{itemize}
	where $U,U'$ are open in $S$ and $V,V'$ are open in $T$.	
	We will pove the first point, as the second is analogous.
	Take $V = (\down V')\interior$.
	Then $V \Leq V'$ by (the proof of) \Cref{lem:Leqfororderedspaces}.
	Also:
	% , so it suffices to show that $U \subseteq g^{-1}( (\down V')\interior)$. But:
	\[
		U
		\subseteq (\down U')\interior
		\subseteq ( \down g^{-1}V')\interior
		\subseteq \down g^{-1}(V')
		\subseteq g^{-1}( \down V')
		= g^{-1}( (\down V')\interior )
		= g^{-1} V
	\]
	where the first inclusion follows from $U \Leq U'$, the second from $U' \subseteq g^{-1} V'$ and \cref{lem:cones}, the third one by definition of interiors, the fourth from the fact that $g$ is monotone and \cref{proposition:monotone iff commutes with cones}, the fifth from the assumption that $T$ has open cones, and the final equality by our choice of $V$.

  Trivially (b) implies (c).

  Finally, we show that (c) implies (a).
  Take an open subset $U$ of $T$. We need to show that $\up U \subseteq (\up U)\interior$. Let $y \in \up U$, meaning there exists $x\in U$ with $x \leq y$. Define a function $g \colon \{0<1\} \to T$ by $g(0)=x$ and $g(1)=y$; we endow the domain of $g$ with the discrete topology, so $g$ is clearly monotone and continuous. By assumption, now $\Opens g$ is a monotone map of ordered locales. In particular, (the proof of) \Cref{proposition:identities and axioms Lpm} now gives 
		% \begin{equation*}
		% 	\begin{tikzcd}[every label/.append style = {font = \normalsize},column sep=0.3cm, row sep=0.35cm]
		% 		{\lbrace 0\rbrace} & {\lbrace 1\rbrace} \\
		% 		{U} & |[color=red]|{\exists V}
		% 		\arrow["\pi_g"{anchor=center}, draw=none, from=2-1, to=1-1]
		% 		\arrow["\pi_g"{anchor=center}, draw=none, from=2-2, to=1-2]
		% 		\arrow["{}"{description}, "\streaml"{anchor=center}, draw=none, from=1-1, to=1-2]
		% 		\arrow["{}"{description}, "\streaml_S"{anchor=center}, draw=none, from=2-1, to=2-2]
		% 	\end{tikzcd},
		% \end{equation*} 
  an open neighbourhood $U \in\Opens T$ of $y$ such that $U \Leq V$. Therefore $y\in V \subseteq \up U$, and since $V$ is open this means precisely that $y\in (\up U)\interior$. Downsets are similarly open, so $T$ has open cones.
\end{proof}

\begin{corollary}\label{corollary:Loc is well defined}
	There is a functor $\Opens \colon \OrdTop_\OC \to \OrdLoc$ defined on objects by $(S,\leq) \mapsto (\Opens S,\Leq)$ and on morphisms by $g \mapsto \Opens g$.
	\qed
\end{corollary}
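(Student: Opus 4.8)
The plan is to verify the two conditions in the definition of a functor: that $\Opens$ preserves identities and composition, and that it is well-defined on both objects and morphisms into $\OrdLoc$. The object component is already settled by \cref{lemma:ordered spaces are ordered locales}, which tells us that $(\Opens S, \Leq)$ is a genuine ordered locale for any $(S,\leq)$ in $\OrdTop_\OC$ (in particular for the Egli–Milner order). The functoriality on the underlying locales is inherited from the known functor $\Opens \colon \Top \to \Loc$ recalled at the start of this section: identities go to identities and $\Opens(h\circ g) = \Opens g \circ \Opens h$ hold already at the level of frame morphisms, since these are just statements about preimages. So the only genuine content left is to check that $\Opens g$ lands in $\OrdLoc$, that is, that $\Opens g$ is a \emph{monotone} morphism of ordered locales whenever $g$ is a continuous monotone function between spaces with open cones.

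The key step is therefore the monotonicity of $\Opens g$, and here I would invoke \cref{proposition:open cones iff maps into codomain are monotone} directly. That proposition shows (a)$\implies$(b): if the codomain $T$ has open cones, then $\Opens g$ is monotone for every continuous monotone $g \colon S \to T$. Since every object of $\OrdTop_\OC$ has open cones by definition, this applies to every morphism $g$ in $\OrdTop_\OC$, and in particular the codomain condition is automatically met. Thus $\Opens g$ is a monotone locale morphism, which is exactly what membership in $\OrdLoc$ requires. This is the crux of the argument and the reason the whole section builds up to \cref{proposition:open cones iff maps into codomain are monotone}: the open cone condition on the codomain is precisely the hypothesis that guarantees the order-theoretic content of $g$ transfers correctly through the reversed frame morphism.

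The remaining bookkeeping is routine. Functoriality reduces to the already-established functoriality of $\Opens \colon \Top \to \Loc$ together with \cref{proposition:composition of maps of ordered locales}, which establishes that composition of monotone locale morphisms is again monotone, so that $\OrdLoc$ is a genuine category and the assignment $g \mapsto \Opens g$ respects composition within it. I expect no real obstacle here, as the hard analytic work—relating cones, interiors, and preimages under the open cone hypothesis—has already been discharged in the proof of \cref{proposition:open cones iff maps into codomain are monotone}. The proof of the corollary is essentially a one-line citation, which is presumably why the statement is marked with \texttt{\textbackslash qed} and given without an explicit proof block in the excerpt.
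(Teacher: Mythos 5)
Your argument is correct and matches the paper's intent exactly: the corollary is stated with an immediate \qed because it follows from \cref{lemma:ordered spaces are ordered locales} (objects), the implication (a)$\implies$(b) of \cref{proposition:open cones iff maps into codomain are monotone} (morphisms), and functoriality inherited from the classical $\Opens\colon\Top\to\Loc$ together with \cref{proposition:composition of maps of ordered locales}. Nothing is missing.
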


\section{Points}\label{sec:points}

This section turns an ordered locale into an ordered space by taking its points. 
A \emph{point} in a locale $X$ is a completely prime filter, that is, a subset $\mathcal{F}\subseteq \Opens X$ satisfying the following properties: 
\begin{itemize}
	\item proper: $\mathcal{F}$ is nonempty, and the least element $0$ of $\Opens X$ is not in $\mathcal{F}$;
  \item upward closed: if $U \in \mathcal{F}$ and $U \sqleq V$, then also $V \in \mathcal{F}$; 
  \item downward directed: if $U,V \in \mathcal{F}$, then also $U \wedge V \in \mathcal{F}$;
  \item completely prime: if $\bigvee U_i \in \mathcal{F}$, then $U_i \in \mathcal{F}$ for some index $i$.
\end{itemize}
Write $\pt(X)$ for the set of points of a locale $X$. It becomes a topological space whose open sets are $\pt(U):=\{ \mathcal{F} \in \pt(X) \mid U \in \mathcal{F} \}$ for $U \in \Opens X$.
If $f \colon X \to Y$ is a locale morphism, and $\mathcal{F}$ is a point of $X$, then $\{V \in \Opens Y \mid f^{-1}(V) \in \mathcal{F}\}$ is a point of $Y$, and this defines a continuous function $\pt(f) \colon \pt(X) \to \pt(Y)$. 
Thus there is a functor $\pt \colon \Loc \to \Top$~\cite[Section~XI.3]{maclane1994sheaves}. In this section we extend it to the ordered setting, starting with objects.

\begin{definition}\label{construction:order on pt}
  For a locale $X$, a preorder $\Leq$ on its frame of opens, and points $\mathcal{F}$ and $\mathcal{G}$, define:
  \[
    \mathcal{F} \leq \mathcal{G}
    \quad \iff \quad
			\forall U \in \mathcal{F}\; \exists V \in \mathcal{G} \colon U \Leq V \;\;\text{ and }\;\;
			\forall V \in \mathcal{G}\; \exists U \in \mathcal{F} \colon U \Leq V    	.
  \]
\end{definition}

It is easy to see that $\leq$ is a preorder on $\pt(X)$ because $\Leq$ is a preorder on $\Opens X$.
Intuitively, no matter how small an open neighbourhood we pick around the (imaginary) point, there is always a neighbourhood around the other point that precedes it. The following lemma gives a simpler characterisation of $\leq$ if $X$ is an ordered locale; compare to \cref{proposition:open cones iff internal cones}.

\begin{lemma}\label{lemma:order on pt in terms of localic cones}
  For points $\mathcal{F}$ and $\mathcal{G}$ in an ordered locale:
  \[
  	\mathcal{F} \leq \mathcal{G}
  	\quad \iff \quad
  	\forall U \in \mathcal{F} \colon \Up U \in \mathcal{G}
  	\;\;\text{ and }\;\;
  	\forall V \in \mathcal{G} \colon \Down V \in \mathcal{F}.
  \]
\end{lemma}
\begin{proof}
  If $\mathcal{F} \leq \mathcal{G}$ then for any $U\in \mathcal{F}$ we can find $V\in\mathcal{G}$ such that $U\Leq V$, which with \cref{lemma:properties of localic cones}(a) gives $V\sqleq \Up U$. Since $\mathcal{G}$ is upwards closed, this implies $\Up U\in\mathcal{G}$. Similarly $\Down V\in\mathcal{F}$ for $V\in\mathcal{G}$. The converse follows from \cref{lemma:properties of localic cones}(b) and (c).
\end{proof}

Next we move to morphisms.

\begin{lemma}\label{proposition:pt is well defined on maps}
  If $f \colon X \to Y$ is a morphism of ordered locales, then the continuous function $\pt(f) \colon \pt(X) \to \pt(Y)$ is monotone.
\end{lemma}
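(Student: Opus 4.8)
We need to show that $\pt(f)$ is monotone.The plan is to verify monotonicity pointwise, leaning entirely on the two characterizations already established. Take two points $\mathcal{F} \leq \mathcal{G}$ in $\pt(X)$; I want to show that their images $\mathcal{F}' := \pt(f)(\mathcal{F})$ and $\mathcal{G}' := \pt(f)(\mathcal{G})$ satisfy $\mathcal{F}' \leq \mathcal{G}'$ in $\pt(Y)$. Recalling that $\pt(f)(\mathcal{F}) = \{V \in \Opens Y \mid f^{-1}(V) \in \mathcal{F}\}$, and invoking the reformulation of the order on points from \cref{lemma:order on pt in terms of localic cones}, it suffices to prove two symmetric conditions: that $\Up V \in \mathcal{G}'$ for every $V \in \mathcal{F}'$, and dually that $\Down W \in \mathcal{F}'$ for every $W \in \mathcal{G}'$.

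For the first condition, fix $V \in \mathcal{F}'$, which by definition means $f^{-1}(V) \in \mathcal{F}$. Since $\mathcal{F} \leq \mathcal{G}$, applying \cref{lemma:order on pt in terms of localic cones} in $X$ yields $\Up f^{-1}(V) \in \mathcal{G}$, where this cone is computed in $X$. The crucial link is the monotonicity of $f$: by \cref{proposition:monotone iff commutes with localic cones} we have the comparison $\Up f^{-1}(V) \sqleq f^{-1}(\Up V)$, with $\Up V$ now computed in $Y$. Because $\mathcal{G}$ is upward closed, from $\Up f^{-1}(V) \in \mathcal{G}$ we conclude $f^{-1}(\Up V) \in \mathcal{G}$, which is precisely the assertion that $\Up V \in \mathcal{G}'$.

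The second condition is handled in the same way, using the $\Down$-half of \cref{proposition:monotone iff commutes with localic cones}, namely $\Down f^{-1}(W) \sqleq f^{-1}(\Down W)$, together with the upward closure of $\mathcal{F}$ and the fact that $\Down W \in \mathcal{F}$ already follows from $W \in \mathcal{G}'$ and $\mathcal{F} \leq \mathcal{G}$ via \cref{lemma:order on pt in terms of localic cones}.

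I do not anticipate a genuine obstacle here: the two preceding results were engineered for exactly this configuration, so the real content is recognizing that they compose. The one place demanding slight care is bookkeeping over which locale each cone is computed in — the hypothesis $\mathcal{F} \leq \mathcal{G}$ delivers localic cones in $X$, whereas the desired conclusion $\mathcal{F}' \leq \mathcal{G}'$ concerns cones in $Y$ — and the bridge between the two is supplied entirely by the monotonicity inequality for $f$ from \cref{proposition:monotone iff commutes with localic cones}.
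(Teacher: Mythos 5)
Your proof is correct, but it is routed through different lemmas than the paper's. The paper works directly from the definition of the order on points (\cref{construction:order on pt}) together with the unpacked form of monotonicity in \cref{lemma:monotonicity unpacked}: given $V \in \pt(f)(\mathcal{F})$, it extracts a witness $U \in \mathcal{G}$ with $f^{-1}V \Leq U$, feeds the pair $(f^{-1}V, V) \in R_f$ into upper monotonicity to produce $W$ with $V \Leq W$ and $U \sqleq f^{-1}W$, and concludes by upward closure of $\mathcal{G}$. You instead reformulate both the hypothesis and the goal via localic cones (\cref{lemma:order on pt in terms of localic cones}) and use the cone inequality $\Up f^{-1}(V) \sqleq f^{-1}(\Up V)$ from \cref{proposition:monotone iff commutes with localic cones}, reducing everything to an inclusion chase with no existential witnesses. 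This is tidier, at the cost of invoking a heavier characterisation that is itself derived from the unpacked monotonicity the paper uses directly; the paper's route is the more elementary of the two. One small slip in your second condition: you write that ``$\Down W \in \mathcal{F}$'' follows from $W \in \mathcal{G}'$ and $\mathcal{F} \leq \mathcal{G}$, but what follows is $\Down f^{-1}(W) \in \mathcal{F}$ (the cone taken in $X$); the conclusion $\Down W \in \mathcal{F}'$ then requires the inequality $\Down f^{-1}(W) \sqleq f^{-1}(\Down W)$ and upward closure, exactly as in your first half. The intent is clear and the argument stands.
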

\begin{proof}
  Suppose that $\mathcal{F} \leq \mathcal{G}$ for points in $X$.
  We will show that $\pt(f)(\mathcal{F}) \leq \pt(f)(\mathcal{G})$ in $Y$.
  Let $V \in \pt(f)(\mathcal{F})$, that is, $f^{-1} V \in \mathcal{F}$.
  \Cref{construction:order on pt} then gives a $U \in \mathcal{G}$ with $f^{-1}V \Leq U$. 
  Because $f$ is monotone, there now exists a $W \in \Opens Y$ such that $V \Leq W$ and $U \sqleq f^{-1} W$. 
  As $\mathcal{G}$ is upwards closed, $W \in \pt(f)(\mathcal{G})$. The symmetric condition follows analogously, thus $\pt(f)(\mathcal{F}) \leq \pt(f)(\mathcal{G})$. 
\end{proof}

\begin{corollary}\label{corollary:pt is well defined}
	There is a functor $\pt \colon \OrdLoc \to \OrdTop$ defined on objects by $(X,\Leq) \mapsto (\pt(X),\leq)$ and on morphisms by $f \mapsto \pt(f)$.
	\qed
\end{corollary}

We would like the functor $\pt \colon \OrdLoc \to \OrdTop$ to land in $\OrdTop_\OC$. That is: when does $\pt(X)$ have open cones? To force this we will make an additional (but reasonable) assumption.

\begin{lemma}\label{lemma:ordered locale with axiom pt has open cones}
  If $X$ is an ordered locale and
  \begin{equation}
    U \Leq V \text{ in }X 
    \qquad \implies \qquad
    \pt(U) \Leq \pt(V) \text{ in } \Opens(\pt(X))
    \tag{P}\label{axiom:pt}
  \end{equation}
  then the ordered space $\pt(X)$ has open cones.
\end{lemma}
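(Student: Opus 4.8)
The plan is to reduce the open cone condition to the pointwise internal-cone characterisation of \cref{proposition:open cones iff internal cones}, and then to verify the two internal conditions using axiom~\eqref{axiom:pt} together with the fact that the localic cones $\Up$ and $\Down$ detect the spatial order on $\pt(X)$ via \cref{lemma:order on pt in terms of localic cones}. The underlying idea is that a basic open $\pt(U)$ has its spatial upper cone controlled by $\pt(\Up U)$, and axiom~\eqref{axiom:pt} is exactly the hypothesis that makes the localic cone $\Up U$ reflect faithfully into the spatial cone of $\pt(U)$.

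First I would treat open upper cones. By \cref{proposition:open cones iff internal cones} it suffices to show that whenever $\mathcal{F} \leq \mathcal{G}$ in $\pt(X)$ and $\mathcal{W}$ is an open neighbourhood of $\mathcal{F}$, then $\mathcal{G} \in (\up \mathcal{W})\interior$. Since the basic opens $\pt(U)$ form a base for $\pt(X)$, I would pick $U \in \Opens X$ with $\mathcal{F} \in \pt(U) \subseteq \mathcal{W}$, that is, $U \in \mathcal{F}$. The relation $\mathcal{F} \leq \mathcal{G}$ together with \cref{lemma:order on pt in terms of localic cones} then yields $\Up U \in \mathcal{G}$, so $\mathcal{G} \in \pt(\Up U)$, which is an open neighbourhood of $\mathcal{G}$. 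The key step is to show $\pt(\Up U) \subseteq \up \mathcal{W}$. Because $X$ is an ordered locale, axiom~\eqref{axiom:V} and \cref{lemma:properties of localic cones}(c) give $U \Leq \Up U$, and applying axiom~\eqref{axiom:pt} produces $\pt(U) \Leq \pt(\Up U)$ in $\Opens(\pt(X))$. Unpacking the upper component of this Egli-Milner relation (\cref{construction:order on Loc}) gives precisely $\pt(\Up U) \subseteq \up \pt(U)$. Monotonicity of cones under inclusion (\cref{lem:cones}(c)) and $\pt(U) \subseteq \mathcal{W}$ then yield $\pt(\Up U) \subseteq \up \pt(U) \subseteq \up \mathcal{W}$. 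As $\mathcal{G} \in \pt(\Up U)$ and $\pt(\Up U)$ is open, we conclude $\mathcal{G} \in (\up \mathcal{W})\interior$, establishing open upper cones.

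For open lower cones I would argue dually: given $\mathcal{F} \leq \mathcal{G}$ and an open neighbourhood $\mathcal{W}$ of $\mathcal{G}$, pick $V \in \mathcal{G}$ with $\pt(V) \subseteq \mathcal{W}$, use \cref{lemma:order on pt in terms of localic cones} to obtain $\Down V \in \mathcal{F}$, and apply axiom~\eqref{axiom:pt} to $\Down V \Leq V$ (from \cref{lemma:properties of localic cones}(c)) to extract the lower Egli-Milner component $\pt(\Down V) \subseteq \down \pt(V) \subseteq \down \mathcal{W}$, whence $\mathcal{F} \in (\down \mathcal{W})\interior$. The only genuine work is the key step highlighted above: without axiom~\eqref{axiom:pt} there is no guarantee that $\pt(\Up U)$ lies in the spatial upper cone $\up \pt(U)$, since a priori the points sitting inside the localic cone $\Up U$ need not each lie above some point of $\pt(U)$. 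Closing exactly this gap is what~\eqref{axiom:pt} does, so I expect that invocation to be the crux of the argument while everything else is bookkeeping with the definitions.
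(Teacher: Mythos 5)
Your proof is correct, but it takes a different route from the paper's. The paper reduces the open cone condition via \cref{proposition:open cones iff maps into codomain are monotone}(c): it suffices to check that every monotone $g\colon\{0<1\}\to\pt(X)$ induces a monotone locale map $\Opens g$, and unpacking that condition for $\mathcal{F}=g(0)\leq g(1)=\mathcal{G}$ turns it into exactly the statement that \cref{construction:order on pt} plus~\eqref{axiom:pt} delivers. You instead reduce via the pointwise internal-cone characterisation of \cref{proposition:open cones iff internal cones} and verify it directly: given $U\in\mathcal{F}$ with $\pt(U)\subseteq\mathcal{W}$, you get $\mathcal{G}\in\pt(\Up U)$ from \cref{lemma:order on pt in terms of localic cones}, and the crux $\pt(\Up U)\subseteq\up\pt(U)$ comes from applying~\eqref{axiom:pt} to $U\Leq\Up U$ (\cref{lemma:properties of localic cones}(c)) and reading off the upper Egli--Milner component. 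Both arguments use~\eqref{axiom:pt} in the same essential way; the paper's version is slightly shorter because the two-point-space test was already set up, while yours is more self-contained and makes explicit \emph{which} open witnesses the interior membership (namely $\pt(\Up U)$), which is arguably more informative --- indeed it essentially re-derives the identity $\up\pt(U)=\pt(\Up U)$ that the paper records separately in the lemma following this one. One cosmetic remark: the sets $\pt(U)$ are not merely a base for $\pt(X)$ but constitute its entire topology, so your choice of $U$ with $\mathcal{F}\in\pt(U)\subseteq\mathcal{W}$ could even be made with $\pt(U)=\mathcal{W}$; this does not affect the argument.
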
	

\begin{proof}
  Assume~\eqref{axiom:pt}; we will show that $\pt(X)$ has open cones.
	By \Cref{proposition:open cones iff maps into codomain are monotone}, it suffices to show that every monotone function $g \colon \{0<1\} \to \pt(X)$ induces a monotone map $\Opens g$ of locales. 
	That is, we need to show that if $V \in \Opens X$ satisfies $\{1\} \sqleq \pt(V)$, then there exists $U \in \Opens X$ such that $\{0\} \sqleq \pt(U)$ and $\pt(U) \Leq \pt(V)$; the required symmetric condition is analogous.
	Unpacking this further, with $\mathcal{F}=g(0) \leq g(1)=\mathcal{G}$, this means that for all $U \in \mathcal{F}$ there exists $V \in \mathcal{G}$ with $\pt(U)\Leq \pt(V)$, and for every $V \in \mathcal{G}$ there exists $U \in \mathcal{F}$ with $\pt(U)\Leq \pt(V)$. But this follows directly from \Cref{construction:order on pt} and~\eqref{axiom:pt}.
\end{proof}

Intuitively, condition~\eqref{axiom:pt} guarantees that the topology of the locale is rich enough to carry its points into the future and past.
This statement is made more precise in the following lemma, which furthermore provides an alternative proof of \cref{lemma:ordered locale with axiom pt has open cones}.
\begin{lemma}
	An ordered locale $X$ satisfies axiom~\eqref{axiom:pt} if and only if for all $U\in\Opens X$:
	\[
	\up\pt(U) = \pt(\Up U)\qquad\text{and}\qquad \down\pt(U)=\pt(\Down U).
	\]
\end{lemma}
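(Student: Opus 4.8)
The plan is to prove both implications by reducing each of the two displayed equalities to a pair of inclusions, one of which holds automatically in any ordered locale and the other of which is exactly where axiom~\eqref{axiom:pt} enters. So first I would record the automatic direction: for every $U\in\Opens X$ we always have $\up\pt(U)\subseteq\pt(\Up U)$ and $\down\pt(U)\subseteq\pt(\Down U)$, with no extra hypothesis. Indeed, if $\mathcal{G}\in\up\pt(U)$ then there is a point $\mathcal{F}$ with $U\in\mathcal{F}$ and $\mathcal{F}\leq\mathcal{G}$; by \cref{lemma:order on pt in terms of localic cones} this forces $\Up U\in\mathcal{G}$, that is $\mathcal{G}\in\pt(\Up U)$, and the downset case is symmetric. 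Thus in both equalities only the reverse inclusions $\pt(\Up U)\subseteq\up\pt(U)$ and $\pt(\Down U)\subseteq\down\pt(U)$ carry content.

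For the forward implication I would assume~\eqref{axiom:pt}. By \cref{lemma:properties of localic cones}(c) we have $U\Leq\Up U$ and $\Down U\Leq U$ in $X$. Feeding these into~\eqref{axiom:pt} yields $\pt(U)\Leq\pt(\Up U)$ and $\pt(\Down U)\Leq\pt(U)$ in the ordered locale $\Opens(\pt(X))$. Unpacking the Egli--Milner order on the ordered space $\pt(X)$ through \cref{construction:order on Loc} --- taking the upper half of the first relation and the lower half of the second --- gives precisely $\pt(\Up U)\subseteq\up\pt(U)$ and $\pt(\Down U)\subseteq\down\pt(U)$. Combined with the automatic inclusions, this establishes both equalities.

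For the converse I would assume the two equalities hold for all $U$, and take $U\Leq V$ in $X$. By \cref{lemma:properties of localic cones}(a) we have $V\sqleq\Up U$ and $U\sqleq\Down V$. Since $U\mapsto\pt(U)$ is monotone for $\sqleq$ and $\subseteq$ (upward closure of completely prime filters), this gives $\pt(V)\subseteq\pt(\Up U)$ and $\pt(U)\subseteq\pt(\Down V)$; substituting the assumed equalities $\pt(\Up U)=\up\pt(U)$ and $\pt(\Down V)=\down\pt(V)$ produces $\pt(V)\subseteq\up\pt(U)$ and $\pt(U)\subseteq\down\pt(V)$, which is exactly $\pt(U)\Leq\pt(V)$. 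Hence~\eqref{axiom:pt} holds.

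The computation is short; the one genuine decision is recognising that each equality splits into an automatic inclusion (supplied by \cref{lemma:order on pt in terms of localic cones}) and a content-bearing inclusion, and that the latter is delivered on one side by \cref{lemma:properties of localic cones}(c) (which relies on axiom~\eqref{axiom:V}, available since $X$ is an ordered locale) and on the other by \cref{lemma:properties of localic cones}(a) together with monotonicity of $\pt$. I anticipate no serious obstacle beyond keeping the upper and lower halves of the Egli--Milner order correctly paired with $\Up$ and $\Down$.
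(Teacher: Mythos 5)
Your proof is correct and follows essentially the same route as the paper's: the automatic inclusions $\up\pt(U)\subseteq\pt(\Up U)$ and $\down\pt(U)\subseteq\pt(\Down U)$ (which the paper imports from \cref{lemma:cone in pt is contained in localic cone} and you re-derive from \cref{lemma:order on pt in terms of localic cones}), then \cref{lemma:properties of localic cones}(c) plus~\eqref{axiom:pt} for the forward direction and \cref{lemma:properties of localic cones}(a) plus monotonicity of $\pt$ for the converse. No gaps.
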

\begin{proof}
	For this proof, we borrow the independent result of \cref{lemma:cone in pt is contained in localic cone} from below, which gives inclusions $\up \pt(U)\subseteq \pt(\Up U)$ and $\down \pt(U)\subseteq \pt(\Down U)$.
	
	Suppose first that~\eqref{axiom:pt} holds. By \cref{lemma:properties of localic cones}(c) we get $U\Leq\Up U$, and hence $\pt(U)\Leq \pt(\Up U)$, which unpacks to give $\pt(\Up U)\subseteq \up\pt(U)$. Equality hence follows using \cref{lemma:cone in pt is contained in localic cone}. The argument for downsets is analogous. 
	
	Now suppose the stated equations hold. Using \cref{lemma:properties of localic cones}(a), if $U\Leq V$ we get $U\sqleq \Down V$ and $V\sqleq \Up U$. These inclusions are respected by points, so we get $\pt(U)\subseteq \pt(\Down V)=\down \pt(V)$ and $\pt(V)\subseteq \pt(\Up U)=\up \pt(U)$, which is precisely what it means for $\pt(U)\Leq \pt(V)$ to hold. This shows~\eqref{axiom:pt} holds.
\end{proof}

In particular, it implies that if $U\Leq V$ then $\pt(U)=\varnothing$ if and only if $\pt(V)=\varnothing$. Note that any locale without points trivially satisfies~\eqref{axiom:pt}. We discuss the axiom briefly further in \cref{sec:conclusion}.

There is a well-known notion of a locale $X$ having \emph{enough points}, which intuitively says that its points distinguish its opens, and formally that the counit $\varepsilon_X$ is an isomorphism. Less well known is the following condition on a \emph{space}: a topological space $S$ has \emph{enough points} if the unit $\eta_S \colon S\to \pt(\Opens(S))$ is surjective. That is, any completely prime filter on a space $S$ with enough points is of the form $\mathcal{F}_x$ for some $x\in S$. Having enough points is  halfway to being sober: a topological space is \emph{sober} if and only if it is $T_0$ and has enough points. Equivalently, a space has enough points if and only if its $T_0$-quotient is sober~\cite{bartels2011soberExceptNotT0}. Ordered spaces with open cones that have enough points always satisfy~\eqref{axiom:pt}.

\begin{lemma}\label{lemma:space with enough points satisfies pt axiom}
	If $S$ is an ordered space with open cones that has enough points,%\footnote{\todo{We now have an example of a space that does not have enough points, but whose image under $\pt\circ\Opens$ still has open cones. Can we give a general proof that~\eqref{axiom:pt} holds for any space with open cones? Note that every point $\mathcal{F}$ in $\Opens S$ is a limit of a net of actual elements of $S$.}} 
	then the ordered locale $\Opens S$ satisfies~\eqref{axiom:pt}.
\end{lemma}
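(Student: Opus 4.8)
The plan is to verify axiom~\eqref{axiom:pt} directly: assuming $U \Leq V$ in $\Opens S$, I must show $\pt(U) \Leq \pt(V)$ in the ordered locale $\Opens(\pt(\Opens S))$, whose order is the Egli--Milner order induced by the preorder $\leq$ on $\pt(\Opens S)$ from \cref{construction:order on pt}. Unpacking this through \cref{construction:order on Loc} applied to the space $\pt(\Opens S)$, the goal splits into two purely set-theoretic containments, with no interiors to worry about precisely because $\pt(U)$ and $\pt(V)$ are already open: namely $\pt(V) \subseteq \up \pt(U)$ and $\pt(U) \subseteq \down \pt(V)$, where $\up$ and $\down$ are taken with respect to $\leq$ on points.

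The engine of the proof is the monotonicity of the unit $\eta_S \colon S \to \pt(\Opens S)$, $x \mapsto \mathcal{F}_x = \{ W \in \Opens S \mid x \in W \}$. First I would prove that $x \leq y$ implies $\mathcal{F}_x \leq \mathcal{F}_y$ using \cref{lemma:order on pt in terms of localic cones}: given an open $A \ni x$, the open cone condition via \cref{proposition:open cones iff internal cones} places $y \in (\up A)\interior = \Up A$ (the identity by \cref{lem:Leqfororderedspaces}), so $\Up A \in \mathcal{F}_y$; dually, for an open $B \ni y$ one gets $x \in (\down B)\interior = \Down B$, so $\Down B \in \mathcal{F}_x$. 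These are exactly the two conditions of \cref{lemma:order on pt in terms of localic cones}, yielding $\mathcal{F}_x \leq \mathcal{F}_y$. This is the one step where the open cone hypothesis is genuinely used.

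For the first containment, take $\mathcal{G} \in \pt(V)$. Since $S$ has enough points, $\eta_S$ is surjective, so $\mathcal{G} = \mathcal{F}_y$ for some $y \in S$, and $V \in \mathcal{G}$ forces $y \in V$. From $U \Leq V$ I read off $V \subseteq \up U$ (the upper half of the Egli--Milner order), so there is $x \in U$ with $x \leq y$. Then $\mathcal{F}_x \in \pt(U)$ and $\mathcal{F}_x \leq \mathcal{F}_y = \mathcal{G}$ by the monotonicity just established, whence $\mathcal{G} \in \up \pt(U)$. As $\mathcal{G}$ was arbitrary, $\pt(V) \subseteq \up \pt(U)$. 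The second containment $\pt(U) \subseteq \down \pt(V)$ is entirely symmetric, using instead the lower half $U \subseteq \down V$. Together these give $\pt(U) \Leq \pt(V)$, which is axiom~\eqref{axiom:pt}.

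The main obstacle is conceptual rather than computational: one must resist trying to prove membership in the \emph{interior} of a cone inside $\pt(\Opens S)$. The correct reading of the Egli--Milner order on opens (\cref{construction:order on Loc}) demands only the plain containments $\pt(V) \subseteq \up \pt(U)$ and $\pt(U) \subseteq \down \pt(V)$, which dissolves that difficulty. The remaining care is bookkeeping: to invoke ``enough points'' exactly once, to realise every completely prime filter on $S$ as some $\mathcal{F}_y$, and to keep the open cone condition isolated to the monotonicity of $\eta_S$.
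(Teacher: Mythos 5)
Your proof is correct and follows essentially the same route as the paper's: use ``enough points'' to realise an arbitrary point of $\pt(U)$ (resp.\ $\pt(V)$) as some $\mathcal{F}_x$, pull the relation $U \Leq V$ back to a relation $x \leq y$ between actual points of $S$, and conclude via monotonicity of the unit $\eta_S$ (which the paper cites as \cref{proposition:unit is monotone} rather than re-deriving inline, but the content is identical). No gaps.
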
			
\begin{proof}
  Let $U$ and $V$ be opens in the space $S$ satisfying $U \Leq V$. 
  We need to show that there are inclusion $\pt(U)\subseteq \down \pt(V)$ and $\pt(V)\subseteq \up \pt(U)$; we will prove the first inclusion, the second one is similar.
  Take a point $\mathcal{F}\in \pt(U)$. Since $S$ has enough points, there is an element $x\in U$ such that $\mathcal{F}=\mathcal{F}_x$. Because $U \Leq V$ now $x\in U\subseteq \down V$, so $x \leq y$  for some $y\in V$. Using the fact that $S$ has open cones, \Cref{proposition:unit is monotone} gives that $\mathcal{F}_x\leq \mathcal{F}_y$, and hence $\mathcal{F}=\mathcal{F}_x\in \down \pt(V)$.
\end{proof}

\begin{example}\label{example:ordered space without enough points}
	Any topological space $S$ that does not have enough points becomes an ordered space with open cones by taking the preorder to be equality of elements (\cref{example:open cones with equality}). The induced order on the locale $\Opens S$ and on the space $\pt(\Opens S)$ are also equality. It is then easily seen that axiom~\eqref{axiom:pt} is satisfied, even though $S$ may not have enough points.
%	With this in mind, consider the space $\overleftarrow{\mathbb{R}}$, which is the set of real numbers endowed with the topology whose opens are precisely the intervals of the form $(-\infty,x)$, for $x\in\mathbb{R}$. This space is $T_0$ but not sober, and hence does not have enough points. Explicitly, the completely prime filter $\mathcal{F}=\lbrace (-\infty,x)\mid x\in\mathbb{R}\rbrace\subseteq\Opens\overleftarrow{\mathbb{R}}$ does not come from an element in $\overleftarrow{\mathbb{R}}$.
%	
%	With equality as the order, it can be shown that the induced orders on the locale $\Opens \overleftarrow{\mathbb{R}}$ and on the space $\pt(\Opens \overleftarrow{\mathbb{R}})$ are also equality. It follows immediately that axiom~\eqref{axiom:pt} is satisfied.
\end{example}

\section{The adjunction}\label{sec:adjunction}

This section puts everything together, proving that the functors of the previous two sections form an adjunction between ordered spaces and ordered locales, and investigating the resulting equivalence. We build on the fundamental adjunction between spaces and locales~\cite{johnstone1982StoneSpaces,maclane1994sheaves}:
\[
	\begin{tikzcd}[column sep = 1.5cm]
		\Top \arrow[r, "\Opens"{name= X}, shift left=2] & \Loc \arrow[l, "\pt"{name=Y}, shift left=2]
		\ar[phantom,from=X, to=Y,  "\vdash" rotate=90]
	\end{tikzcd}
\]
and will extend it to the ordered setting.
The unit of this classic adjunction is the natural transformation whose components are continuous functions
\begin{align*}
  \eta_S \colon S & \longrightarrow \pt(\Opens S) \\
  x & \longmapsto \mathcal{F}_x = \{ U \in \Opens S \mid x \in U \}
\end{align*}
for topological spaces $S$.
The counit is the natural transformation $\varepsilon \colon \Opens \circ \pt \Rightarrow \mathrm{Id}_{\Loc}$ whose components are frame morphisms
\begin{align*}
  \varepsilon^{-1}_X \colon \Opens X & \longrightarrow \Opens(\pt(X)) \\
  U & \longmapsto \pt(U)
\end{align*}
for locales $X$.	

We start by investigating how this unit and counit interact with the order structures on $\pt(\Opens S)$ and $\Opens(\pt X)$ given by \Cref{construction:order on Loc,construction:order on pt}. 

\begin{lemma}\label{proposition:unit is monotone}\label{lemma:order of points from elements}
  If $x$ and $y$ are points of an ordered space $S$, then in $\pt(\Opens S)$:
  \[
    \mathcal{F}_x \leq \mathcal{F}_y
    \quad\iff\quad
    \forall U \in \mathcal{F}_x \colon (\up U)\interior \in \mathcal{F}_y
    \;\;\text{ and }\;\;
    \forall V \in \mathcal{F}_y \colon (\down V)\interior \in \mathcal{F}_x.
  \]
  Hence the unit $\eta_S \colon S \to \pt(\Opens S)$ is monotone if and only if $S$ has open cones.
\end{lemma}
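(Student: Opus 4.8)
The plan is to obtain both statements by assembling results already available, with essentially no new computation required. For the biconditional, note first that $\Opens S$ is an ordered locale under the Egli-Milner order by \cref{lemma:ordered spaces are ordered locales}, and that each $\mathcal{F}_x$ is a completely prime filter, hence a genuine point. I can therefore apply \cref{lemma:order on pt in terms of localic cones} to $\mathcal{F}_x$ and $\mathcal{F}_y$, obtaining
\[
  \mathcal{F}_x \leq \mathcal{F}_y
  \quad\iff\quad
  \forall U \in \mathcal{F}_x \colon \Up U \in \mathcal{F}_y
  \;\text{ and }\;
  \forall V \in \mathcal{F}_y \colon \Down V \in \mathcal{F}_x.
\]
Substituting the explicit descriptions $\Up U = (\up U)\interior$ and $\Down V = (\down V)\interior$ from \cref{lem:Leqfororderedspaces}, and unfolding the defining equivalence $W \in \mathcal{F}_z \iff z \in W$, the right-hand side reads: for every open $U \ni x$ one has $y \in (\up U)\interior$, and for every open $V \ni y$ one has $x \in (\down V)\interior$. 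This is exactly the claimed characterization of $\mathcal{F}_x \leq \mathcal{F}_y$.

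For the monotonicity statement, recall that $\eta_S(x) = \mathcal{F}_x$, so $\eta_S$ is monotone if and only if $x \leq y$ implies $\mathcal{F}_x \leq \mathcal{F}_y$ for all $x,y \in S$. Feeding in the characterization just established, this amounts to requiring that whenever $x \leq y$, both $y \in (\up U)\interior$ for every open neighbourhood $U$ of $x$, and $x \in (\down V)\interior$ for every open neighbourhood $V$ of $y$. These are precisely the two conditions from \cref{proposition:open cones iff internal cones} characterizing open upper cones and open lower cones, respectively. Hence $\eta_S$ is monotone if and only if $S$ has open cones.

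I anticipate no genuine obstacle here: the argument is bookkeeping that matches the quantifier patterns of the cited lemmas. The one place deserving care is the backward direction of the second claim, where one must observe that monotonicity of $\eta_S$ — a statement quantified over all comparable pairs $x \leq y$ — delivers exactly the universally quantified implications of \cref{proposition:open cones iff internal cones}, and nothing weaker. I would also note explicitly at the outset that each $\mathcal{F}_x$ is a point, so that \cref{lemma:order on pt in terms of localic cones} legitimately applies.
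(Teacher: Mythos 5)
Your proof is correct and follows essentially the same route as the paper: the paper likewise derives the characterisation by combining \cref{lem:Leqfororderedspaces} with \cref{lemma:order on pt in terms of localic cones}, and then deduces the monotonicity statement from \cref{proposition:open cones iff internal cones}. Your write-up merely makes explicit the bookkeeping (that $\Opens S$ is an ordered locale and that each $\mathcal{F}_x$ is a point) that the paper leaves implicit.
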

\begin{proof}
  The first statement follows immediately from \Cref{lem:Leqfororderedspaces,lemma:order on pt in terms of localic cones}. 
  Using this, the unit being monotone, that is, $x \leq y \implies \mathcal{F}_x \leq \mathcal{F}_y$, is equivalent to the open cone condition by \cref{proposition:open cones iff internal cones}.
\end{proof}

\begin{lemma}\label{proposition:counit is monotone}\label{lemma:cone in pt is contained in localic cone}
  If $X$ is a locale, $\Leq$ a preorder on its frame of opens, and $U \in \Opens X$ an open, then in $\pt(X)$:
  \[
    \up\pt(U) \subseteq \pt(\Up U)
    \qquad\text{and}\qquad
    \down\pt(U) \subseteq \pt(\Down U).
  \]
  If $X$ is an ordered locale, then the counit $\varepsilon_X \colon \Opens(\pt X) \to X$ is monotone.
\end{lemma}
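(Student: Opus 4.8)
The plan is to prove the statement in two parts, as it naturally splits.

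For the first part, I would fix a point $\mathcal{F} \in \up \pt(U)$ and show $\mathcal{F} \in \pt(\Up U)$, i.e.\ that $\Up U \in \mathcal{F}$. By definition of the upset in $\pt(X)$, there is a point $\mathcal{G} \in \pt(U)$ with $\mathcal{G} \leq \mathcal{F}$. Since $\mathcal{G} \in \pt(U)$ means $U \in \mathcal{G}$, I can apply the characterisation of $\leq$ on points from \cref{construction:order on pt}: from $U \in \mathcal{G}$ and $\mathcal{G} \leq \mathcal{F}$ there exists $V \in \mathcal{F}$ with $U \Leq V$. By \cref{lemma:properties of localic cones}(a), $U \Leq V$ gives $V \sqleq \Up U$, and since filters are upward closed and $V \in \mathcal{F}$, I conclude $\Up U \in \mathcal{F}$, hence $\mathcal{F} \in \pt(\Up U)$. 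The downset inclusion is entirely symmetric, using the second clause of \cref{construction:order on pt} and the dual part of \cref{lemma:properties of localic cones}(a). Note this first part needs only that $\Leq$ is a preorder, matching the stated hypotheses.

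For the second part, assuming $X$ is an ordered locale, I would verify that the counit $\varepsilon_X$ is monotone by appealing to the convenient criterion in \cref{proposition:monotone iff commutes with localic cones}. That criterion says $\varepsilon_X$ is monotone precisely when $\Up \varepsilon_X^{-1}(U) \sqleq \varepsilon_X^{-1}(\Up U)$ and the dual downset inequality hold for all $U$. Since $\varepsilon_X^{-1}(U) = \pt(U)$, the required inequalities read $\Up \pt(U) \sqleq \pt(\Up U)$ in the frame $\Opens(\pt X)$, where $\Up$ on the left is now the localic cone computed in the \emph{ordered locale} $\pt(\Opens(\pt X))$ — wait, more carefully, the localic cone on the space-of-points side. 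The key observation is that the localic upset $\Up$ of an open $\pt(U)$ in $\Opens(\pt X)$ equals $(\up \pt(U))\interior = \up \pt(U)$ by \cref{lem:Leqfororderedspaces} (using that $\pt(X)$ carries the induced Egli--Milner order), so the needed inequality becomes exactly $\up \pt(U) \subseteq \pt(\Up U)$, which is the inclusion just proved in the first part.

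The main obstacle I anticipate is bookkeeping rather than conceptual: one must be careful that the order on the space $\pt(X)$ is the one induced via \cref{construction:order on pt}, and that the localic cones appearing when applying \cref{proposition:monotone iff commutes with localic cones} to $\varepsilon_X$ are computed in the correct frame with the correct induced order, so that \cref{lem:Leqfororderedspaces} applies to rewrite them as topological cones. Once this identification is made transparent, the second part reduces immediately to the first, and no further computation is required. I would therefore present the first inclusion in full and then close the second part in one or two sentences by invoking \cref{proposition:monotone iff commutes with localic cones} together with the already-established inclusions.
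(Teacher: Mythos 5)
Your proposal is correct and follows essentially the same route as the paper: the first inclusion is proved by unpacking $\mathcal{G}\leq\mathcal{F}$ via \cref{construction:order on pt} and applying \cref{lemma:properties of localic cones}(a) plus upward closure, and the counit statement is then deduced from \cref{proposition:monotone iff commutes with localic cones}. One small caution: your claim that $(\up\pt(U))\interior = \up\pt(U)$ is not justified without open cones on $\pt(X)$, but it is also unnecessary, since the criterion only needs $\Up\pt(U) = (\up\pt(U))\interior \subseteq \up\pt(U) \subseteq \pt(\Up U)$.
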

\begin{proof}
	For the first statement, take $\mathcal{G}\in \up \pt(U)$, so that there exists $\mathcal{F}\in \pt(U)$ with $\mathcal{F}\leq\mathcal{G}$. In particular, there then exists $V\in\mathcal{G}$ such that $U\Leq V$. \cref{lemma:properties of localic cones}(a) now gives $V\sqleq \Up U$, and since $\mathcal{G}$ is upwards closed this implies $\Up U\in\mathcal{G}$, that is, $\mathcal{G}\in\pt(\Up U)$. The proof for downsets is similar.

	That the counit $\epsilon_X$ is monotone for an ordered locale now follows immediately from the characterisation of monotonicity in \cref{proposition:monotone iff commutes with localic cones}.
%	To prove that the counit is monotone, we will show that if $U,U' \in \Opens X$ satisfy $\pt U \Leq \pt(U')$ and $U' \sqleq \varepsilon_X^{-1} V$, then there is $V' \in \Opens X$ with $V' \Leq V$ and $\pt U \sqleq \varepsilon_X^{-1} V'$; the symmetric requirement is analogous.
%	Take $V' = \Down V$.
%	Then $V' \Leq V$ by axiom~\eqref{axiom:V} and \Cref{lemma:properties of localic cones}(c)
%	Furthermore:
%	\[
%		\pt(U)
%		\subseteq
%		\down(\pt U')
%		\subseteq 
%		\down(\pt V)
%		\subseteq 
%		\pt(\Down V)
%	\]
%	where the first inclusion follows from $\pt(U)\leq\pt(U')$, the second inclusion from $\pt(U') \sqleq \varepsilon_X^{-1} V$ by \Cref{lem:cones}(c), and the last inclusion from the first part of this proof.
\end{proof}

We have arrived at our main result.

\begin{theorem}\label{thm:adjunction}
	There is an adjunction:
	\[
		\begin{tikzcd}[column sep = 1.5cm]
			\OrdTop_{\OC}^\bullet \arrow[r, "\Opens"{name= X}, shift left=2] & \OrdLoc^\bullet
			\arrow[l, "\pt"{name=Y}, shift left=2]
			\ar[phantom,from=X, to=Y,  "\vdash" rotate=90]
		\end{tikzcd}
	\]
	between the full subcategory of $\OrdTop$ of ordered spaces with open cones and enough points, and the full subcategory of $\OrdLoc$ of ordered locales satisfying~\eqref{axiom:pt}.
\end{theorem}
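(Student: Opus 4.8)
The plan is to leverage the classical adjunction $\Opens \dashv \pt$ between $\Top$ and $\Loc$ and to promote it to the ordered setting by checking that all of its structure respects the order. Because the forgetful functors $\OrdTop \to \Top$ and $\OrdLoc \to \Loc$ are faithful (monotonicity of a locale morphism is a property, since the relevant $2$-cells in $\Rel$ are unique when they exist), the naturality squares and the two triangle identities are equations of underlying continuous, respectively locale, morphisms, and hence transport for free once the relevant components are known to be genuine morphisms in the ordered categories. Thus the real work is twofold: confirming that $\Opens$ and $\pt$ restrict to the indicated subcategories, and confirming that the unit $\eta$ and counit $\varepsilon$ are pointwise ordered morphisms.

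First I would check well-definedness of the functors on objects. For $\Opens$, given an ordered space $S$ with open cones and enough points, \cref{corollary:Loc is well defined} gives that $\Opens S$ is an ordered locale, and \cref{lemma:space with enough points satisfies pt axiom} upgrades this to the fact that $\Opens S$ satisfies axiom~\eqref{axiom:pt}; hence $\Opens$ lands in $\OrdLoc^{\bullet}$. For $\pt$, given an ordered locale $X$ satisfying~\eqref{axiom:pt}, \cref{corollary:pt is well defined} gives that $\pt(X)$ is an ordered space and \cref{lemma:ordered locale with axiom pt has open cones} gives that it has open cones; the remaining requirement, that $\pt(X)$ has enough points, is the standard fact that the space of points of any locale is sober, and hence has enough points. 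So $\pt$ lands in $\OrdTop_{\OC}^{\bullet}$.

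Next I would verify that the unit and counit of the classical adjunction are morphisms in the ordered categories. For the unit, since every object of $\OrdTop_{\OC}^{\bullet}$ has open cones, \cref{proposition:unit is monotone} shows $\eta_S \colon S \to \pt(\Opens S)$ is monotone; its codomain lies in $\OrdTop_{\OC}^{\bullet}$ because $\Opens S \in \OrdLoc^{\bullet}$. For the counit, every object of $\OrdLoc^{\bullet}$ is in particular an ordered locale, so \cref{proposition:counit is monotone} shows $\varepsilon_X \colon \Opens(\pt X) \to X$ is a monotone locale morphism; its domain lies in $\OrdLoc^{\bullet}$ since $\pt(X) \in \OrdTop_{\OC}^{\bullet}$ and $\Opens$ preserves~\eqref{axiom:pt}. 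With the components now established as ordered morphisms, naturality of $\eta$ and $\varepsilon$ and the triangle identities hold because they already hold in the underlying adjunction and the forgetful functors are faithful.

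I expect the only genuinely delicate point to be the bookkeeping around the subcategory restrictions: specifically, confirming that the codomain of each unit component and the domain of each counit component actually lie in the restricted categories, together with the appeal to soberness of $\pt(X)$ to secure enough points. All of the order-theoretic content has been isolated into the cited lemmas, so once these restrictions are in place the theorem is essentially an assembly argument, with no new inequality left to prove.
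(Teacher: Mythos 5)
Your proposal is correct and follows essentially the same route as the paper, whose proof is precisely the assembly of \cref{corollary:Loc is well defined,corollary:pt is well defined,lemma:space with enough points satisfies pt axiom,proposition:unit is monotone,proposition:counit is monotone}, with naturality and the triangle identities inherited from the classical adjunction. Your additional bookkeeping (invoking \cref{lemma:ordered locale with axiom pt has open cones} for open cones of $\pt(X)$ and sobriety for enough points) makes explicit some steps the paper leaves implicit, but introduces no new idea.
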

\begin{proof}
  Combine \Cref{corollary:Loc is well defined,corollary:pt is well defined,lemma:space with enough points satisfies pt axiom,proposition:unit is monotone,proposition:counit is monotone}.
  % \todo{[Fill out.]}
	% We know the functors are well defined by \cref{corollary:Loc and pt}, so we just need to check that they form an adjunction. The classical adjunction $\Loc\dashv\pt$ gives a natural bijection (\cite[Theorem IX.3.1]{maclane1994sheaves}):
	% \begin{align*}
	% 	\Top(S,\pt(X))  &\xrightarrow{~ \sim ~}  \Loc(\Loc(S),X)\\
	% 	g&\xmapsto{\hspace{3ex}} \varepsilon_X\circ \Loc(g)\\
	% \pt(f)\circ \eta_S~ &{~\reflectbox{\ensuremath{\xmapsto{\hspace{3ex}}}}} ~f	.						\end{align*}
	% Hence, it suffices to show that this restricts to a natural bijection in the ordered setting. But this follows immediately from the fact that the components of the unit $\eta$ and counit $\epsilon$ are both monotone (in their respective senses) by \cref{proposition:unit is monotone,proposition:counit is monotone}. This concludes the proof.
\end{proof}

Finally, we investigate the fixed points of this adjunction. In the unordered setting, the fixed points of the classic adjunction between spaces and locales are precisely the \emph{sober} spaces and \emph{spatial} locales~\cite[Section IX.3]{maclane1994sheaves}. 
On the locale side this remains the same in the ordered setting.

\begin{lemma}\label{lem:counitiso}
  If an ordered locale $X$ is spatial, the inverse $(\varepsilon_X)^{-1} \colon X \to \Opens(\pt X)$ of the counit locale morphism is monotone. 
\end{lemma}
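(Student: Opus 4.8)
The plan is to verify monotonicity of $(\varepsilon_X)^{-1}$ through the cone criterion of \cref{proposition:monotone iff commutes with localic cones}. Since $X$ is spatial, the counit $\varepsilon_X \colon \Opens(\pt X) \to X$ is an isomorphism of locales, so $(\varepsilon_X)^{-1} \colon X \to \Opens(\pt X)$ is a genuine locale morphism; write $\theta = (\varepsilon_X)^{-1}$. Its frame map $\theta^{-1} \colon \Opens(\pt X) \to \Opens X$ is inverse to $\varepsilon_X^{-1} \colon U \mapsto \pt(U)$, so $\theta^{-1}(\pt(U)) = U$ for every $U \in \Opens X$, and by spatiality every open of $\pt X$ is of the form $\pt(U)$. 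Thus, by \cref{proposition:monotone iff commutes with localic cones}, it suffices to show that for every $U \in \Opens X$,
\[
  \Up U \sqleq \theta^{-1}\bigl(\Up \pt(U)\bigr)
  \qquad\text{and}\qquad
  \Down U \sqleq \theta^{-1}\bigl(\Down \pt(U)\bigr),
\]
where the cones on the right are those of the ordered locale $\Opens(\pt X)$.

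The heart of the argument is to identify these localic cones in $\Opens(\pt X)$. Working, as in the fixed-point analysis, with an ordered locale in $\OrdLoc^\bullet$, I may use that $X$ satisfies axiom~\eqref{axiom:pt}, which is equivalent to the equalities $\up \pt(U) = \pt(\Up U)$ and $\down \pt(U) = \pt(\Down U)$ for all $U$. Because $\pt(\Up U)$ and $\pt(\Down U)$ are open in $\pt X$, \cref{lem:Leqfororderedspaces} applied to the ordered space $\pt X$ gives
\[
  \Up \pt(U) = (\up \pt(U))\interior = \pt(\Up U)
  \qquad\text{and}\qquad
  \Down \pt(U) = (\down \pt(U))\interior = \pt(\Down U).
\]
In other words, the frame isomorphism $\varepsilon_X^{-1} = \pt(-)$ intertwines the localic cones of $X$ with those of $\Opens(\pt X)$.

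Applying the inverse frame map $\theta^{-1}$ to these identities, and using $\theta^{-1}\circ\pt(-) = \id$, yields $\theta^{-1}(\Up \pt(U)) = \Up U$ and $\theta^{-1}(\Down \pt(U)) = \Down U$, so both displayed inequalities hold with equality. By \cref{proposition:monotone iff commutes with localic cones} the morphism $\theta = (\varepsilon_X)^{-1}$ is therefore monotone.

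The main obstacle is precisely the middle step. Spatiality by itself only makes $\varepsilon_X$ a localic isomorphism and, via \cref{lemma:cone in pt is contained in localic cone}, guarantees just one inclusion, $\Up \pt(U) \sqleq \pt(\Up U)$; the reverse inclusion $\pt(\Up U) \sqleq \Up \pt(U)$ — equivalently, that $\pt(-)$ genuinely \emph{preserves} cones rather than merely being monotone — is exactly the content of axiom~\eqref{axiom:pt}. It is this preservation of cones, and not spatiality alone, that forces the inverse of the counit to stay monotone; indeed, a spatial ordered locale failing~\eqref{axiom:pt} (such as a locale of opens under the inclusion order $\sqleq$, whose induced point order is the specialisation order) need not have a monotone inverse counit.
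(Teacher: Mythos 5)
Your proof is correct, but it takes a genuinely different route from the paper's, and in doing so it surfaces something the paper's one-line proof obscures. The paper derives the lemma ``immediately from \cref{lem:axiomL}'': unpacking monotonicity of $(\varepsilon_X)^{-1}$ via the relational squares of \cref{lemma:monotonicity unpacked}, using spatiality to write every open of $\pt X$ as $\pt(V)$ with $\theta^{-1}(\pt(V))=V$, the two conditions reduce to the completion properties of \cref{lem:axiomL}(a) and (b) --- except that one must still pass from $V \Leq V'$ in $X$ to $\pt(V)\Leq\pt(V')$ in $\Opens(\pt X)$, which is exactly axiom~\eqref{axiom:pt} and is not supplied by \cref{lem:axiomL}. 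You instead go through the cone criterion of \cref{proposition:monotone iff commutes with localic cones} and show that, under~\eqref{axiom:pt} (in the form $\up\pt(U)=\pt(\Up U)$ and $\down\pt(U)=\pt(\Down U)$, via the unlabelled lemma following \cref{lemma:ordered locale with axiom pt has open cones} and \cref{lem:Leqfororderedspaces}), the frame isomorphism $U\mapsto\pt(U)$ intertwines the localic cones of $X$ with those of $\Opens(\pt X)$, so the required inclusions hold with equality; this bypasses \cref{lem:axiomL} entirely. Your closing diagnosis is the real added value: spatiality alone does \emph{not} suffice, since for a spatial ordered locale monotonicity of $(\varepsilon_X)^{-1}$ is in fact \emph{equivalent} to~\eqref{axiom:pt}, and your example $(\Opens S,\sqleq)$ with $S=\mathbb{R}$ witnesses the failure (there $\Up V$ is the top element of $\Opens S$ while $\Up\pt(V)=\pt(V)$, so $\Up\theta^{-1}(\pt(V))\sqleq\theta^{-1}(\Up\pt(V))$ fails). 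Since \cref{lem:counitiso} is only invoked in \cref{theorem:stone duality}, where~\eqref{axiom:pt} is assumed, importing that hypothesis as you do is legitimate, but strictly speaking the lemma's statement should carry it explicitly, and the paper's appeal to \cref{lem:axiomL} alone is incomplete without it.
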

\begin{proof}
  This follows immediately from \Cref{lem:axiomL}.
\end{proof}

But the ordered spaces fixed by the adjunction gain an interesting new condition. Recall from \Cref{example:open cones with equality,example:no open cones} that there is no relation between the open cone condition and separation axioms. 

\begin{definition}\label{definition:T0-ordered}
	An ordered space is \emph{$T_0$-ordered} if, whenever $x \not\leq y$, either there exists an open neighbourhood $U$ of $x$ such that $y \notin \up U$, or there exists an open neighbourhood $V$ of $y$ such that $x \notin \down V$.
\end{definition}

The previous definition is a natural weakening of the $T_1$-order separation axiom from~\cite{mccartan1968SeparationAxiomsTopological}; see also~\cite{kunzi2005TiorderedReflections}. Indeed, it is easy to see that being $T_1$-ordered implies being $T_0$-ordered. Also, $T_0$-ordered implies $T_0$. In particular, any pospace is $T_0$-ordered.

\begin{example}\label{example:spacetimes not T0}
	In contrast to \cref{corollary:smooth spacetimes have OC}, not every smooth spacetime is $T_0$-ordered: Minkowski space with one point removed fails this property. This shows that, in general, despite spacetimes being sober, the ordered locale induced by a spacetime loses some information about the causal relations between the original points. Any \emph{causally simple} spacetime \cite[Definition 4.112]{minguzzi2019LorentzianCausalityTheory} is by definition $T_1$-ordered, and hence $T_0$-ordered. In particular, any \emph{globally hyperbolic} spacetime is $T_0$-ordered. 
\end{example}

\begin{lemma}\label{lem:unitiso}
	Let $S$ be an ordered space with open cones that is sober. The inverse $\eta_S^{-1} \colon \pt(\Opens S)\to S$ of the unit is monotone if and only if $S$ is $T_0$-ordered.
\end{lemma}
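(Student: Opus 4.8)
The plan is to show that both monotonicity of $\eta_S^{-1}$ and the $T_0$-ordered condition unwind to the \emph{same} elementary statement about points, namely that $\mathcal{F}_x \leq \mathcal{F}_y$ forces $x \leq y$; the equivalence then drops out at once.

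First I would record the role of soberness. Since $S$ is sober it is in particular $T_0$ and has enough points, so the unit $\eta_S$ is a bijection (indeed a homeomorphism), and every point of $\pt(\Opens S)$ is of the form $\mathcal{F}_x$ for a unique $x \in S$, with $\eta_S^{-1}(\mathcal{F}_x) = x$. Consequently, monotonicity of $\eta_S^{-1}$ is by definition exactly the implication $\mathcal{F}_x \leq \mathcal{F}_y \implies x \leq y$ for all $x, y \in S$ (the forward monotonicity $x \leq y \implies \mathcal{F}_x \leq \mathcal{F}_y$ is \Cref{proposition:unit is monotone} and is not at issue here).

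Next I would unpack $\mathcal{F}_x \leq \mathcal{F}_y$ using the characterisation in \Cref{lemma:order of points from elements}: it says $(\up U)\interior \in \mathcal{F}_y$ for every $U \in \mathcal{F}_x$, and $(\down V)\interior \in \mathcal{F}_x$ for every $V \in \mathcal{F}_y$. Because $S$ has open cones we have $(\up U)\interior = \up U$ and $(\down V)\interior = \down V$, and since $W \in \mathcal{F}_z \iff z \in W$, this condition becomes precisely: $y \in \up U$ for every open neighbourhood $U$ of $x$, and $x \in \down V$ for every open neighbourhood $V$ of $y$.

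Finally I would observe that this last conjunction is the negation of the disjunctive conclusion in \Cref{definition:T0-ordered}. Rewriting the $T_0$-ordered axiom contrapositively, it asserts exactly that whenever $y \in \up U$ for all open $U \ni x$ and $x \in \down V$ for all open $V \ni y$, then $x \leq y$; by the previous step this is literally $\mathcal{F}_x \leq \mathcal{F}_y \implies x \leq y$, which coincides verbatim with the monotonicity condition from the first step. Hence both directions of the equivalence follow simultaneously. The only care required is the bookkeeping of the two nested negations and the use of the open cone condition to discard the interiors; I do not expect a genuine obstacle beyond this.
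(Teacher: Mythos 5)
Your argument is correct and follows essentially the same route as the paper's proof: identify monotonicity of $\eta_S^{-1}$ with the implication $\mathcal{F}_x \leq \mathcal{F}_y \implies x \leq y$ via soberness, unpack $\mathcal{F}_x \leq \mathcal{F}_y$ using \cref{lemma:order of points from elements} and the open cone condition, and match the (contrapositive of the) result against \cref{definition:T0-ordered}. Your version merely spells out the bookkeeping that the paper leaves implicit.
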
		
\begin{proof}
	Since $\eta_S^{-1}(\mathcal{F}_x)=x$, it follows that $\eta_S^{-1}$ is monotone if and only if $x \not\leq y$ implies $\mathcal{F}_x \not\leq \mathcal{F}_y$. Using \Cref{lemma:order of points from elements} to unpack the order on $\pt(\Opens S)$, together with the open cone condition, this is equivalent to $S$ being $T_0$-ordered.
\end{proof}

\begin{lemma}\label{lemma:pt(X) is T0-ordered}
	If $X$ is an ordered locale, then $\pt(X)$ is $T_0$-ordered.
\end{lemma}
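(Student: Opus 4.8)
The plan is to unravel the definition of $T_0$-ordered directly at the level of completely prime filters, using the localic-cone characterisation of the order on $\pt(X)$ together with the counit inclusions. Recall that the open neighbourhoods of a point $\mathcal{F}$ in the space $\pt(X)$ are exactly the sets $\pt(U)$ with $U\in\mathcal{F}$, and that the up-/downsets appearing in \cref{definition:T0-ordered} are taken with respect to the induced order $\leq$ on $\pt(X)$. So the goal is: whenever $\mathcal{F}\not\leq\mathcal{G}$, either there is some $U\in\mathcal{F}$ with $\mathcal{G}\notin\up\pt(U)$, or there is some $V\in\mathcal{G}$ with $\mathcal{F}\notin\down\pt(V)$.

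First I would rewrite $\mathcal{F}\leq\mathcal{G}$ via \cref{lemma:order on pt in terms of localic cones} as the conjunction of $(\forall U\in\mathcal{F}\colon \Up U\in\mathcal{G})$ and $(\forall V\in\mathcal{G}\colon \Down V\in\mathcal{F})$. Negating, $\mathcal{F}\not\leq\mathcal{G}$ splits into two cases: either there exists $U\in\mathcal{F}$ with $\Up U\notin\mathcal{G}$, or there exists $V\in\mathcal{G}$ with $\Down V\notin\mathcal{F}$. These two disjuncts are designed to match the two alternatives in the definition of $T_0$-ordered, and each is handled by the corresponding inclusion from \cref{lemma:cone in pt is contained in localic cone}, used contrapositively.

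In the first case, I take the open neighbourhood $\pt(U)$ of $\mathcal{F}$. Since $\up\pt(U)\subseteq\pt(\Up U)$, if $\mathcal{G}$ were in $\up\pt(U)$ then $\Up U\in\mathcal{G}$, contradicting the hypothesis; hence $\mathcal{G}\notin\up\pt(U)$, giving the first alternative. Symmetrically, in the second case I take the open neighbourhood $\pt(V)$ of $\mathcal{G}$ and use $\down\pt(V)\subseteq\pt(\Down V)$ to conclude $\mathcal{F}\notin\down\pt(V)$, giving the second alternative. In either case the $T_0$-ordered condition is verified for the pair $(\mathcal{F},\mathcal{G})$.

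There is no substantial obstacle here; the only point demanding care is the bookkeeping of the negation, namely checking that the disjunction produced by negating the two-clause definition of $\leq$ lines up precisely with the two-clause disjunction in \cref{definition:T0-ordered}, and that the counit inclusions are applied in the correct (contrapositive) direction. Notably, the argument uses only the general inclusions $\up\pt(U)\subseteq\pt(\Up U)$ and $\down\pt(U)\subseteq\pt(\Down U)$, so it requires neither axiom~\eqref{axiom:pt} nor enough points, confirming that every $\pt(X)$ is $T_0$-ordered unconditionally.
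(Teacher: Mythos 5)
Your proof is correct and follows exactly the paper's own argument: negate the characterisation of $\leq$ from \cref{lemma:order on pt in terms of localic cones} and apply the inclusions $\up\pt(U)\subseteq\pt(\Up U)$ and $\down\pt(V)\subseteq\pt(\Down V)$ from \cref{lemma:cone in pt is contained in localic cone} contrapositively. No differences worth noting.
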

\begin{proof}
	If $\mathcal{F}\not\leq\mathcal{G}$ then by \Cref{lemma:order on pt in terms of localic cones} there exists $U\in\mathcal{F}$ such that $\Up U\notin\mathcal{G}$, or there exists $V\in\mathcal{G}$ such that $\Down V\notin\mathcal{F}$. Following \Cref{lemma:cone in pt is contained in localic cone}, this implies, respectively, that $\mathcal{G}\notin \up \pt(U)$ or $\mathcal{F}\notin \down \pt(V)$, as desired.
\end{proof}

This brings us to the ordered version of Stone duality~\cite[Corollary IX.3.4]{maclane1994sheaves}.

\begin{theorem}\label{theorem:stone duality}
	The adjunction of \Cref{thm:adjunction} restricts to an equivalence of categories between the full subcategory of $\OrdLoc$ of spatial ordered locales satisfying~\eqref{axiom:pt} and the full subcategory of $\OrdTop_\OC$ of sober $T_0$-ordered spaces.
	\qed
\end{theorem}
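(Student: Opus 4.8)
The plan is to invoke the general categorical principle that any adjunction restricts to an equivalence between the full subcategories spanned by those objects on which the unit, respectively the counit, is an isomorphism (see~\cite[Section IV.3]{maclane1994sheaves}). Concretely, within the adjunction of \cref{thm:adjunction} I would identify the full subcategory of $\OrdLoc^\bullet$ on which the counit $\varepsilon_X$ is an isomorphism \emph{in $\OrdLoc$}, and the full subcategory of $\OrdTop_\OC^\bullet$ on which the unit $\eta_S$ is an isomorphism \emph{in $\OrdTop_\OC$}, and then show these coincide with the two subcategories in the statement.

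For the locale side: by the classical unordered Stone adjunction, $\varepsilon_X$ is an isomorphism of locales exactly when $X$ is spatial. I would promote this to an isomorphism in $\OrdLoc$ using that \cref{proposition:counit is monotone} already makes $\varepsilon_X$ monotone for any ordered locale, while \cref{lem:counitiso} makes its inverse monotone whenever $X$ is spatial. Hence $\varepsilon_X$ is an isomorphism in $\OrdLoc$ precisely for spatial ordered locales, which within $\OrdLoc^\bullet$ are exactly the spatial ordered locales satisfying~\eqref{axiom:pt}. For the space side: classically $\eta_S$ is a homeomorphism exactly when $S$ is sober. To upgrade this to an isomorphism in $\OrdTop_\OC$ I would use \cref{proposition:unit is monotone}, which makes $\eta_S$ monotone since $S$ has open cones, together with \cref{lem:unitiso}, which shows that for sober $S$ the inverse $\eta_S^{-1}$ is monotone if and only if $S$ is $T_0$-ordered. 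Thus $\eta_S$ is an isomorphism in $\OrdTop_\OC$ exactly for sober $T_0$-ordered spaces with open cones; these lie in $\OrdTop_\OC^\bullet$ because sober spaces have enough points.

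Finally I would confirm that the two subcategories are matched by the functors, so that the restricted adjunction is genuinely an equivalence: $\pt(X)$ is always sober and $T_0$-ordered (\cref{lemma:pt(X) is T0-ordered}) and has open cones whenever $X$ satisfies~\eqref{axiom:pt} (\cref{lemma:ordered locale with axiom pt has open cones}), while conversely $\Opens S$ is spatial and satisfies~\eqref{axiom:pt} for a sober space with open cones and enough points (\cref{lemma:space with enough points satisfies pt axiom}). The only genuinely new content beyond the unordered duality is ensuring that the inverses of the unit and counit are themselves monotone. The main obstacle is therefore not topological but order-theoretic, and it is resolved precisely by the $T_0$-ordered condition entering through \cref{lem:unitiso} (and, trivially, by spatiality through \cref{lem:counitiso} on the other side); the remainder is bookkeeping layered on top of the classical result.
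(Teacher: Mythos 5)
Your proposal is correct and follows essentially the same route as the paper, which leaves the theorem as an immediate consequence (\qed) of exactly the lemmas you cite: the classical fixed-point characterisation of the unordered adjunction, upgraded on the locale side by \cref{proposition:counit is monotone} and \cref{lem:counitiso}, and on the space side by \cref{proposition:unit is monotone} and \cref{lem:unitiso}, with \cref{lemma:pt(X) is T0-ordered}, \cref{lemma:ordered locale with axiom pt has open cones}, and \cref{lemma:space with enough points satisfies pt axiom} confirming that the functors restrict as claimed. No gaps.
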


\section{Adjunctions with upper- and lower orders}\label{section:upper-lower adjunctions}
All proofs above can be separated into analogous upper and lower parts. With this in mind, and without having to do extra work, we get two similar adjunctions that use only the upper or lower order instead of the Egli-Milner order. First, we define two new categories:
	\begin{itemize}
		\item the category of ordered locales with upper monotone maps: $\OrdLoc^{\Up}$;
		\item the category of ordered locales with lower monotone maps: $\OrdLoc^{\Down}$.
	\end{itemize}

Inspecting the proof of \cref{proposition:open cones iff maps into codomain are monotone}, the upper/lower open cone condition corresponds precisely to $\Opens g$ being upper/lower monotone for every continuous monotone function $g$. Hence \Cref{corollary:Loc is well defined} can be rewritten as follows.

\begin{corollary}
	There are functors:
%	\begin{center}
%		\begin{minipage}{.45\textwidth}
%			\begin{align*}
%				\OrdTop_\OC^{+} &\xrightarrow{~\Opens^{+}~} \OrdLoc^{+};\\
%				(S,\leq) & \longmapsto (\Opens S,\LeqU),\\
%				g & \longmapsto \Opens g,
%			\end{align*}
%		\end{minipage}%
%		\begin{minipage}{.45\textwidth}
%			\begin{align*}
%				\OrdTop_\OC^{-} &\xrightarrow{~\Opens^{-}~} \OrdLoc^{-};\\
%				(S,\leq) & \longmapsto (\Opens S,\LeqL),\\
%				g & \longmapsto \Opens g.
%			\end{align*}
%		\end{minipage}
%	\end{center}
	\begin{center}
		\begin{minipage}{.45\textwidth}
			\begin{align*}
				\OrdTop_\OC^{\up} &\xrightarrow{~\Opens^{\up}~} \OrdLoc^{\Up}\\
				(S,\leq) & \longmapsto (\Opens S,\LeqU)\\
				g & \longmapsto \Opens g
			\end{align*}
		\end{minipage}%
		\begin{minipage}{.45\textwidth}
			\begin{align*}
				\OrdTop_\OC^{\down} &\xrightarrow{~\Opens^{\down}~} \OrdLoc^{\Down}\\
				(S,\leq) & \longmapsto (\Opens S,\LeqL)\\
				g & \longmapsto \Opens g
			\end{align*}
		\end{minipage}
	\end{center}
\end{corollary}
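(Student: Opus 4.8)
The plan is to recognise this corollary as a bookkeeping refinement of \Cref{corollary:Loc is well defined}: every ingredient already appears in the Egli-Milner argument, and the only work is to check that its upper (resp.\ lower) half goes through using only the upper (resp.\ lower) hypotheses. I would treat $\Opens^{\up}$ and $\Opens^{\down}$ in parallel, proving the upper statement in full and obtaining the lower one by dualising cones. For the object assignment, \Cref{lemma:ordered spaces are ordered locales} already gives that $(\Opens S,\LeqU)$ is an ordered locale for \emph{any} ordered space $S$: its proof only invokes \Cref{lem:cones} and axiom~\eqref{axiom:V}, neither of which needs an open-cone hypothesis. So the object part is immediate and in fact requires nothing beyond $S$ being an ordered space.

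For the morphism assignment, given a continuous monotone $g \colon S \to T$ with $T \in \OrdTop_\OC^{\up}$, I would verify that $\Opens g$ is upper monotone by checking the first bullet of \Cref{lemma:monotonicity unpacked} with $\Leq = \LeqU$. Given $U \LeqU U'$ in $\Opens S$ and $(U,V) \in R_{\Opens g}$, i.e.\ $U \subseteq g^{-1}V$, I would set $V' := (\up V)\interior = \up V$, which is open by the open-upper-cone hypothesis on $T$. Then $V \LeqU V'$ holds trivially, and
\[
	U' \subseteq \up U \subseteq \up g^{-1}(V) \subseteq g^{-1}(\up V) = g^{-1}(V')
\]
gives $(U',V') \in R_{\Opens g}$: the first inclusion is $U \LeqU U'$, the second is \Cref{lem:cones}(c) applied to $U \subseteq g^{-1}V$, the third is monotonicity of $g$ via \Cref{proposition:monotone iff commutes with cones}(b), and the equality is $\up V = (\up V)\interior$. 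This is precisely the upper half of the chain of inclusions in the proof of \Cref{proposition:open cones iff maps into codomain are monotone}, now isolated from its lower counterpart. The lower statement for $\Opens^{\down}$ is obtained symmetrically, replacing $\LeqU$ by $\LeqL$ and the choice $V' = \up V$ by $V = (\down V')\interior$, and using the open-lower-cone hypothesis.

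Finally, preservation of identities follows from \Cref{corollary:identities are monotone} (since $\LeqU$, being an ordered-locale structure, satisfies the property of \Cref{lem:axiomL}(a)), and preservation of composition follows from \Cref{proposition:composition of maps of ordered locales}, whose argument via $R_{g\circ f} = R_g \circ R_f$ and the interchange law in $\Rel$ never mixes the upper and lower 2-cells. The only point needing genuine care --- and the closest thing to an obstacle --- is confirming that the split is truly clean: that the upper monotonicity argument uses \emph{only} the open upper cone condition (and on the codomain at that, never on $S$ nor anything about lower cones), so that the functor is well defined on all of $\OrdTop_\OC^{\up}$. The displayed inclusion above makes this transparent, since the sole open-cone input is the single equality $\up V = (\up V)\interior$ in $T$.
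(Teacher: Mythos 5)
Your proposal is correct and follows exactly the route the paper takes: the paper's justification for this corollary is precisely to inspect the proof of \cref{proposition:open cones iff maps into codomain are monotone} and observe that its two bullet points split cleanly into an upper half (using only $\up V=(\up V)\interior$ in $T$) and a lower half, with the object part already covered by \cref{lemma:ordered spaces are ordered locales} without any open-cone hypothesis. Your explicit chain of inclusions and the checks of identities and composition via \cref{corollary:identities are monotone} and \cref{proposition:composition of maps of ordered locales} simply spell out what the paper leaves implicit.
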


%\begin{corollary}
%		There is a functor $\Opens^{\up} \colon \OrdTop_\OC^{\up} \to \OrdLoc^{\Up}$ defined on objects by $(S,\leq) \mapsto (\Opens S,\LeqU)$ and on morphisms by $g \mapsto \Opens g$.
%	\qed
%\end{corollary}
%\begin{corollary}
%	There is a functor $\Opens^{\down} \colon \OrdTop_\OC^{\down} \to \OrdLoc^{\Down}$ defined on objects by $(S,\leq) \mapsto (\Opens S,\LeqL)$ and on morphisms by $g \mapsto \Opens g$.
%	\qed
%\end{corollary}

Similarly to how $\LeqEM$ is defined in terms of $\LeqU$ and $\LeqL$, there are two new orders on the points of an ordered locale:
	\begin{itemize}
		\item $\mathcal{F}\leqU\mathcal{G}$ if and only if $\Up U\in\mathcal{G}$ for all $U\in \mathcal{F}$;
		\item $\mathcal{F}\leqL\mathcal{G}$ if and only if $\Down V\in\mathcal{F}$ for all $V\in\mathcal{G}$.
	\end{itemize}
By \cref{lemma:order on pt in terms of localic cones}, the order on the points of an ordered locale from \cref{construction:order on pt} then simply reads $\mathcal{F}\leq\mathcal{G}$ if and only if $\mathcal{F}\leqU\mathcal{G}$ and $\mathcal{F}\leqL\mathcal{G}$.

\begin{example}\label{example:ordered locale with inclusion}
	Recall from \cref{example:locale with inclusion} that any locale $X$ induces an ordered locale $(X,\sqleq)$, where the order is the intrinsic inclusion relation on the opens. For any $U\in\Opens X$ we find $\Up U = X$, the largest open, and $\Down U = U$. Hence the induced orders on the points become:
	\begin{itemize}
		\item $\mathcal{F}\leqU \mathcal{G}$ for all $\mathcal{F},\mathcal{G}\in\pt(X)$;
		\item $\mathcal{F}\leqL\mathcal{G}$ if and only if $\mathcal{G}\subseteq\mathcal{F}$.
	\end{itemize}
	In this way, if $X=\Opens S$ for some topological space $S$, the (opposite of the) specialisation order is reobtained: $x\in\overline{\{y\}}$ if and only if $\mathcal{F}_x\subseteq \mathcal{F}_y$ if and only if $\mathcal{F}_y\leqL \mathcal{F}_x$. 
	
	Further, $X\mapsto (X,\sqleq)$ defines a left adjoint to the forgetful functor:
	\[
	\begin{tikzcd}[column sep = 1.5cm]
		\Loc \arrow[r, ""{name= X}, shift left=2] & \OrdLoc^{\Down}. \arrow[l, ""{name=Y}, shift left=2]
		\ar[phantom,from=X, to=Y,  "\vdash" rotate=90]
	\end{tikzcd}
	\]
	(We actually get a functor $\Loc\to \OrdLoc$, but the adjunction only works when restricted to lower monotone maps.)
\end{example}

\begin{example}
	Starting with an ordered space $S$, we can first take the upper-ordered locale $(\Opens S,\LeqU)$. Using the same proof as \cref{lem:Leqfororderedspaces}, it is easy to verify that $\Up U = (\up U)\interior$. On the other hand, we find $\Down U = S$, since $W\LeqU U$ holds in particular for every open subset $W\supseteq U$. Hence applying the functor $\pt^{\down}$ to $(\Opens S,\LeqU)$ gives the space $\pt(\Opens S)$ that is equipped with the following order: $\mathcal{F}\leqL\mathcal{G}$ if and only if for all $V\in\mathcal{G}$ we have $S=\Down V \in \mathcal{F}$, which is always true. Therefore the induced lower order on the points becomes trivial. Similarly, the resulting space under $\pt^{\up}\circ \Opens^{\down}$ will have the trivial order. 
\end{example}

Looking at the proof of \cref{proposition:pt is well defined on maps}, we see that if a map of locales $f$ is upper/lower monotone, then $\pt(f)$ is monotone with respect to the upper/lower order on points. Hence \cref{corollary:pt is well defined} can be rewritten as follows.

\begin{corollary}
	There are functors
	\begin{center}
		\begin{minipage}{.45\textwidth}
			\begin{align*}
				\OrdLoc^{\Up} &\xrightarrow{~\pt^{\up}~} \OrdTop\\
				(X,\Leq) & \longmapsto (\pt(X),\leqU)\\
				f & \longmapsto \pt(f)
			\end{align*}
		\end{minipage}%
		\begin{minipage}{.45\textwidth}
			\begin{align*}
				\OrdLoc^{\Down} &\xrightarrow{~\pt^{\down}~} \OrdTop\\
				(X,\Leq) & \longmapsto (\pt(X),\leqL)\\
				f & \longmapsto \pt(f)
			\end{align*}
		\end{minipage}
	\end{center}
\end{corollary}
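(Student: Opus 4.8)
The plan is to read this off as the upper/lower refinement of \Cref{corollary:pt is well defined}, in exactly the same way that the preceding corollary refines \Cref{corollary:Loc is well defined}. The underlying continuous map of $\pt(f)$ is unchanged, so all of the topological and functorial content is already supplied by the established functor $\pt \colon \Loc \to \Top$; everything new concerns only the order data, and that data splits cleanly into an upper half and a lower half. Note in particular that we only claim to land in $\OrdTop$ (which imposes no relation between topology and order), so no open-cone hypothesis or instance of~\eqref{axiom:pt} is required.

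First I would verify the object assignments, i.e.\ that $\leqU$ and $\leqL$ are genuinely preorders on $\pt(X)$. Reflexivity of $\leqU$ is immediate: for $U \in \mathcal{F}$ we have $U \sqleq \Up U$ by \Cref{lemma:properties of localic cones}(b), so $\Up U \in \mathcal{F}$ because points are upward closed. Transitivity follows from $\Up \Up U = \Up U$ in \Cref{lemma:properties of localic cones}(d): if $\mathcal{F} \leqU \mathcal{G} \leqU \mathcal{H}$ and $U \in \mathcal{F}$, then $\Up U \in \mathcal{G}$, whence $\Up \Up U = \Up U \in \mathcal{H}$. The argument for $\leqL$ is the mirror image, using the downset clauses of \Cref{lemma:properties of localic cones}. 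These are of course just the two separate clauses of the Egli--Milner order on points from \Cref{construction:order on pt}, as recorded just before \Cref{example:ordered locale with inclusion}, so $\pt^{\up}$ and $\pt^{\down}$ are well-defined on objects.

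For the morphism part I would dissect the proof of \Cref{proposition:pt is well defined on maps} into its two halves. Suppose $f$ is upper monotone and $\mathcal{F} \leqU \mathcal{G}$ in $X$. Take $V \in \pt(f)(\mathcal{F})$, that is $f^{-1}V \in \mathcal{F}$; by definition of $\leqU$ there is $U \in \mathcal{G}$ with $f^{-1}V \Leq U$, and $(f^{-1}V, V) \in R_f$ holds trivially. Upper monotonicity of $f$ via \Cref{lemma:monotonicity unpacked} then yields $W \in \Opens Y$ with $V \Leq W$ and $(U, W) \in R_f$, i.e.\ $U \sqleq f^{-1}W$; since $U \in \mathcal{G}$ and $\mathcal{G}$ is upward closed, $f^{-1}W \in \mathcal{G}$, so $W \in \pt(f)(\mathcal{G})$. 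As $V$ was arbitrary this gives $\pt(f)(\mathcal{F}) \leqU \pt(f)(\mathcal{G})$, so $\pt(f)$ is $\leqU$-monotone. The claim that lower monotonicity of $f$ makes $\pt(f)$ be $\leqL$-monotone is the symmetric argument, using the lower clause of \Cref{lemma:monotonicity unpacked}.

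Finally, functoriality is inherited directly from $\pt \colon \Loc \to \Top$, since $\pt^{\up}$ and $\pt^{\down}$ agree with $\pt$ on underlying continuous maps: identities are preserved (the identity locale morphism is both upper and lower monotone), and composites of upper (resp.\ lower) monotone maps are again upper (resp.\ lower) monotone by the $\Rel$ interchange argument of \Cref{proposition:composition of maps of ordered locales} restricted to the upper (resp.\ lower) $2$-cells. I do not expect a genuine obstacle: the statement is a bookkeeping refinement, and the only point requiring care is keeping the upper clause matched to upper monotonicity and the lower clause to lower monotonicity throughout the dissection of \Cref{proposition:pt is well defined on maps}.
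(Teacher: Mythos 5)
Your proposal is correct and follows essentially the same route as the paper, which likewise obtains this corollary by splitting the proof of \cref{proposition:pt is well defined on maps} into its upper and lower halves (the underlying continuous map and functoriality being inherited from $\pt\colon\Loc\to\Top$). The extra details you supply — checking that $\leqU$ and $\leqL$ are preorders via \cref{lemma:properties of localic cones} and matching each clause of \cref{lemma:monotonicity unpacked} to the corresponding half of the order on points — are exactly the bookkeeping the paper leaves implicit.
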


%\begin{corollary}
%	There is a functor $\pt^{\up} \colon \OrdLoc^{\Up} \to \OrdTop$ defined on objects by $(X,\Leq) \mapsto (\pt(X),\leqU)$ and on morphisms by $f \mapsto \pt(f)$.
%	\qed
%\end{corollary}
%\begin{corollary}
%	There is a functor $\pt^{\down} \colon \OrdLoc^{\Down} \to \OrdTop$ defined on objects by $(X,\Leq) \mapsto (\pt(X),\leqL)$ and on morphisms by $f \mapsto \pt(f)$.
%	\qed
%\end{corollary}

The following result generalises \cref{lemma:order of points from elements}.

\begin{lemma}
	For any ordered space $S$:
		\begin{itemize}
			\item $\eta_S\colon S\to \pt^{\up}(\Opens^{\up} S)$ is monotone if and only if $S$ has open upper cones;
			\item $\eta_S\colon S\to \pt^{\down}(\Opens^{\down} S)$ is monotone if and only if $S$ has open lower cones.
		\end{itemize}
\end{lemma}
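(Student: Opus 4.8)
The plan is to prove this as a direct upper/lower decomposition of the already-established \cref{lemma:order of points from elements}. That earlier lemma handles the combined Egli-Milner case, so the task here is simply to observe that its proof separates cleanly into an upper half and a lower half, and that each half corresponds to exactly one of the two factored orders $\leqU$ and $\leqL$ on points.

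First I would recall the explicit description of $\leqU$ on $\pt^{\up}(\Opens^{\up} S)$: by definition $\mathcal{F}_x \leqU \mathcal{F}_y$ if and only if $\Up U \in \mathcal{F}_y$ for every $U \in \mathcal{F}_x$. Here the localic cone is computed with respect to $\LeqU$, and using the upper-order analogue of \cref{lem:Leqfororderedspaces} (noted in the excerpt to follow by the same proof) we have $\Up U = (\up U)\interior$. Thus $\mathcal{F}_x \leqU \mathcal{F}_y$ unpacks to: for every open neighbourhood $U$ of $x$, we have $y \in (\up U)\interior$. The next step is to compare this with monotonicity of $\eta_S$, which asks that $x \leq y$ implies $\mathcal{F}_x \leqU \mathcal{F}_y$, i.e. that $x \le y$ implies $y \in (\up U)\interior$ for every open neighbourhood $U$ of $x$. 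By \cref{proposition:open cones iff internal cones}, this condition on $S$ is exactly having open upper cones. The lower case is entirely symmetric, using $\Down V = (\down V)\interior$ for the order $\LeqL$ and the second bullet of \cref{proposition:open cones iff internal cones}.

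Concretely, for each direction I would argue: if $S$ has open upper cones and $x \le y$, then for any open $U \ni x$ the point $y$ lies in $(\up U)\interior$ by \cref{proposition:open cones iff internal cones}, whence $\Up U = (\up U)\interior \in \mathcal{F}_y$, giving $\mathcal{F}_x \leqU \mathcal{F}_y$. Conversely, if $\eta_S$ is monotone, then taking any open $U$ and any $x \le y$ with $x \in U$, monotonicity yields $y \in (\up U)\interior$; since this holds for all such neighbourhoods, \cref{proposition:open cones iff internal cones} delivers open upper cones. The lower statement follows by dualising $\up \leftrightarrow \down$, $\LeqU \leftrightarrow \LeqL$, and $\leqU \leftrightarrow \leqL$ throughout.

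I do not anticipate a genuine obstacle, since all the substantive content is packaged in \cref{proposition:open cones iff internal cones} and the upper/lower versions of \cref{lem:Leqfororderedspaces}; the only mild care needed is to confirm that the localic cones $\Up,\Down$ in the factored categories $\OrdLoc^{\Up}, \OrdLoc^{\Down}$ are computed from $\LeqU$ respectively $\LeqL$ rather than from $\LeqEM$, so that they genuinely equal $(\up U)\interior$ and $(\down V)\interior$ as needed. Once that identification is in place the proof is a short invocation of the cited results, and indeed one could simply write that it follows by the upper/lower halves of the argument for \cref{lemma:order of points from elements}.
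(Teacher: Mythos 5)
Your proposal is correct and matches the paper's intended argument: the paper gives no separate proof, stating only that the lemma generalises \cref{lemma:order of points from elements}, whose proof is exactly the chain you reproduce (unpack $\leqU$/$\leqL$ via the localic cones, identify $\Up U = (\up U)\interior$ and $\Down V = (\down V)\interior$ for $\LeqU$/$\LeqL$ as in \cref{lem:Leqfororderedspaces}, and conclude by \cref{proposition:open cones iff internal cones}). You also correctly flag the one point needing care, namely that the localic cones in $\OrdLoc^{\Up}$ and $\OrdLoc^{\Down}$ are computed from $\LeqU$ and $\LeqL$ respectively, which the paper itself notes in the example following the corollary on the functors $\pt^{\up}$ and $\pt^{\down}$.
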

\begin{theorem}\label{theorem:adjunction upperlower}
	There are adjunctions
	\begin{equation*}
	\begin{tikzcd}[column sep = 1.5cm]
		\OrdTop_{\OC}^{\up\bullet} \arrow[r, "\Opens^{\up}"{name= X}, shift left=2] & \OrdLoc^{\Up\bullet}
		\arrow[l, "\pt^{\up}"{name=Y}, shift left=2]
		\ar[phantom,from=X, to=Y,  "\vdash" rotate=90]
	\end{tikzcd}
	\quad\text{and}\quad
	\begin{tikzcd}[column sep = 1.5cm]
		\OrdTop_{\OC}^{\down\bullet} \arrow[r, "\Opens^{\down}"{name= X}, shift left=2] & \OrdLoc^{\Down\bullet}
		\arrow[l, "\pt^{\down}"{name=Y}, shift left=2]
		\ar[phantom,from=X, to=Y,  "\vdash" rotate=90]
	\end{tikzcd}
	\end{equation*}
	between the full subcategories of $\OrdTop$ of ordered spaces with open upper/lower cones and enough points, and the full subcategories of $\OrdLoc^{\Up}$ and $\OrdLoc^{\Down}$ of ordered locales satisfying, respectively:
  \begin{align}
    U \LeqU V \text{ in }X 
    \qquad \implies \qquad
    \pt(U) \LeqU \pt(V) \text{ in } \Opens{}^\uparrow(\pt^\uparrow(X))
    \tag{\ref{axiom:pt}$^\uparrow$} \\
    U \LeqL V \text{ in }X 
    \qquad \implies \qquad
    \pt(U) \LeqL \pt(V) \text{ in } \Opens{}^\downarrow(\pt^\downarrow(X))
    \tag{\ref{axiom:pt}$^\downarrow$}
  \end{align}
\end{theorem}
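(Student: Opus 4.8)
The plan is to lift the classical Stone adjunction $\Opens \dashv \pt$ between $\Top$ and $\Loc$ exactly as in the proof of \Cref{thm:adjunction}, but tracking only the upper (respectively lower) half of every intermediate statement. The crucial structural observation is that the forgetful functors $\OrdTop \to \Top$ and $\OrdLoc^{\Up} \to \Loc$ are faithful and act as the identity on underlying morphisms: an $\OrdTop$-morphism is merely a continuous function carrying extra monotonicity data, and likewise an upper monotone map is merely a locale morphism satisfying a 2-cell condition. Consequently the triangle identities are automatically inherited from the unordered adjunction, and the only thing left to verify is that the components of the unit and counit are genuine morphisms in the refined categories, together with the well-definedness of the domain and codomain restrictions.

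For the upper adjunction, the functors $\Opens^{\up}$ and $\pt^{\up}$ are supplied by the two corollaries of this section. Unit monotonicity is exactly the preceding lemma: $\eta_S$ is monotone for the upper order precisely when $S$ has open upper cones. For the counit, I would observe that the inclusion $\up \pt(U) \subseteq \pt(\Up U)$ in \Cref{proposition:counit is monotone} is established using only the upper order, and that the characterization in \Cref{proposition:monotone iff commutes with localic cones} splits, upper monotonicity being equivalent to $\Up f^{-1}(V) \sqleq f^{-1}(\Up V)$ alone; combining these with $\Up \pt(U) = (\up \pt(U))\interior \sqleq \pt(\Up U)$ (via \Cref{lem:Leqfororderedspaces}) shows that $\varepsilon_X$ is upper monotone. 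Finally, the domain/codomain bookkeeping requires the upper analogues of \Cref{lemma:space with enough points satisfies pt axiom} (enough points forces the upper axiom) and \Cref{lemma:ordered locale with axiom pt has open cones} (the upper axiom forces open upper cones); both proofs already isolate their upper and lower inclusions, so one simply discards the lower halves. The lower adjunction is entirely dual, replacing $\up$, $\Up$, $\LeqU$, $\leqU$ by $\down$, $\Down$, $\LeqL$, $\leqL$ throughout.

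The main obstacle, such as it is, is not conceptual but a matter of confirming that each ingredient genuinely decouples. The one place deserving care is the monotonicity characterization of \Cref{lemma:monotonicity unpacked} and \Cref{proposition:monotone iff commutes with localic cones}: one must check that upper monotonicity of a locale map is equivalent to the single inclusion $\Up f^{-1}(V) \sqleq f^{-1}(\Up V)$ in isolation, so that the counit's upper monotonicity does not secretly depend on the lower order. Since the localic cones $\Up$ and $\Down$ are defined independently of one another, and the proofs of those results treat the two 2-cells of \Cref{definition:morphisms of ordered locales} separately, this decoupling holds; the two adjunctions then follow by assembling the upper (respectively lower) ingredients in exactly the way \Cref{thm:adjunction} assembled the Egli--Milner ones.
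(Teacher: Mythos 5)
Your proposal is correct and follows essentially the same route as the paper, which itself justifies \cref{theorem:adjunction upperlower} only by observing that every ingredient of \cref{thm:adjunction} (the functor corollaries, unit/counit monotonicity, and the enough-points and axiom-\eqref{axiom:pt} lemmas) splits cleanly into independent upper and lower halves. Your extra check that upper monotonicity of a locale map is governed by the single inclusion $\Up f^{-1}(V)\sqleq f^{-1}(\Up V)$ is exactly the decoupling the paper relies on (and confirms later when noting $\Up U=(\up U)\interior$ for $(\Opens S,\LeqU)$), so nothing further is needed.
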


Next, we discuss the fixed points of these adjunctions.

\begin{definition}
	An ordered space $(S,\leq)$ is \emph{$T_\mathrm{U}$-ordered} if $x\nleqslant y$ implies there exists an open neighbourhood $U$ of $x$ such that $y\notin \up U$. Similarly, we say the space is \emph{$T_\mathrm{L}$-ordered} if $x\nleqslant y$ implies there exists an open neighbourhood $V$ of $y$ such that $x\notin \down V$.
\end{definition}

Note that the $T_\mathrm{U}$- and $T_\mathrm{L}$-order axioms both imply the $T_0$-ordered axiom.

\begin{lemma}\label{lemma:unit iso upperlower}
	For a sober ordered space $S$:
		\begin{itemize}
			\item $\eta_S^{-1}\colon \pt^{\up}(\Opens^{\up}S)\to S$ is monotone if and only if $S$ is $T_\mathrm{U}$-ordered;
			\item $\eta_S^{-1}\colon \pt^{\down}(\Opens^{\down}S)\to S$ is monotone if and only if $S$ is $T_\mathrm{L}$-ordered.
		\end{itemize}
\end{lemma}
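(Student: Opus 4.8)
The plan is to run the argument of \Cref{lem:unitiso} in the upper/lower setting, treating only the upper bullet since the lower one is perfectly dual. Because $S$ is sober, the unit $\eta_S$ is a homeomorphism and every point of $\pt(\Opens S)$ is of the form $\mathcal{F}_x$ with $\eta_S^{-1}(\mathcal{F}_x) = x$. Hence $\eta_S^{-1} \colon \pt^{\up}(\Opens^{\up} S) \to S$ is monotone exactly when $\mathcal{F}_x \leqU \mathcal{F}_y$ implies $x \leq y$ for all $x,y \in S$, which by contraposition reads: $x \nleqslant y$ implies $\mathcal{F}_x \not\leqU \mathcal{F}_y$.

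Next I would unpack the right-hand side. As observed above, the localic upper cone of the upper-ordered locale $(\Opens S, \LeqU)$ is $\Up U = (\up U)\interior$, so by the defining clause of $\leqU$ on points the relation $\mathcal{F}_x \leqU \mathcal{F}_y$ holds iff $\Up U \in \mathcal{F}_y$ for every $U \in \mathcal{F}_x$, i.e.\ iff $y \in (\up U)\interior$ for every open neighbourhood $U$ of $x$. Negating, $\mathcal{F}_x \not\leqU \mathcal{F}_y$ holds iff there is an open neighbourhood $U$ of $x$ with $y \notin (\up U)\interior$. Comparing this to the definition of $T_\mathrm{U}$-ordered, which asks instead for an open neighbourhood $U$ of $x$ with $y \notin \up U$, the equivalence will follow once $(\up U)\interior$ and $\up U$ are identified; the lower bullet is obtained verbatim from $\Down U = (\down U)\interior$, the defining clause of $\leqL$, and the definition of $T_\mathrm{L}$-ordered.

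The one step that genuinely needs care — and the place where a hypothesis beyond soberness is used — is this last identification $(\up U)\interior = \up U$. It is exactly the open (upper, resp.\ lower) cone condition, via \Cref{proposition:open cones iff internal cones}, and it is available because we work throughout this section with spaces carrying the relevant open cones, just as \Cref{lem:unitiso} assumes open cones. Without it the equivalence breaks, since the order $\leqU$ on points only ever detects the interior $(\up U)\interior$, whereas the $T_\mathrm{U}$-axiom constrains $\up U$ itself; so I would make sure this assumption is in force before concluding.
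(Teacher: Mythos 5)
Your proposal is correct and follows essentially the same route as the paper's proof: reduce monotonicity of $\eta_S^{-1}$ via sobriety to the contrapositive statement that $x\nleqslant y$ implies $\mathcal{F}_x\not\leqU\mathcal{F}_y$, unpack the latter through the localic cone $\Up U=(\up U)\interior$, and use the open upper (resp.\ lower) cone condition to identify this with $\up U$ and hence with the $T_\mathrm{U}$- (resp.\ $T_\mathrm{L}$-) ordered axiom. Your explicit flagging of the implicit open-cone hypothesis matches exactly what the paper's own proof invokes.
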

\begin{proof}
	The map $\eta_S^{-1}$ is monotone in the upper setting if and only if $x\nleqslant y$ implies $\mathcal{F}\nleqslant_\textsc{u}\mathcal{F}_y$. The latter holds if and only if there exists $U\in\mathcal{F}_x$ such that $\Up U\notin \mathcal{F}_y$. Since $S$ has open upper cones and by the proof of \cref{lem:Leqfororderedspaces} we see $\Up U = \up U$, so we obtain the equivalence between the $T_\mathrm{U}$-ordered condition. The proof in the lower setting is analogous.
\end{proof}

Generalising the proof of \cref{lemma:pt(X) is T0-ordered} gives the following.

\begin{lemma}
	For any ordered locale $X$, the space $\pt^{\up}(X)$ is $T_\mathrm{U}$-ordered, and the space $\pt^{\down}(X)$ is $T_\mathrm{L}$-ordered.
\end{lemma}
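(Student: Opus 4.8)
The plan is to generalise the proof of \cref{lemma:pt(X) is T0-ordered}, which handled the Egli-Milner case, by stripping it down to just its upper half for $\pt^{\up}(X)$ and just its lower half for $\pt^{\down}(X)$. Recall that the $T_\mathrm{U}$-ordered condition requires: whenever $\mathcal{F}\nleqslant_\mathrm{U}\mathcal{G}$, there exists an open neighbourhood of $\mathcal{F}$ (in $\pt^{\up}(X)$) whose upset does not contain $\mathcal{G}$. Since the opens of $\pt(X)$ are exactly the sets $\pt(U)$ for $U\in\Opens X$, a neighbourhood of $\mathcal{F}$ is a $\pt(U)$ with $U\in\mathcal{F}$, and the obstruction $\mathcal{G}\notin\up\pt(U)$ is what we must produce.

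First I would unpack $\mathcal{F}\nleqslant_\mathrm{U}\mathcal{G}$ using the definition of the upper order on points: $\mathcal{F}\leqslant_\mathrm{U}\mathcal{G}$ means $\Up U\in\mathcal{G}$ for all $U\in\mathcal{F}$, so its negation gives a specific $U\in\mathcal{F}$ with $\Up U\notin\mathcal{G}$. This $U$ is the witness whose neighbourhood $\pt(U)$ will separate the points. Second, I would invoke \cref{lemma:cone in pt is contained in localic cone}, which provides the inclusion $\up\pt(U)\subseteq\pt(\Up U)$. From $\Up U\notin\mathcal{G}$ we get $\mathcal{G}\notin\pt(\Up U)$ by definition of $\pt(\Up U)$, and the inclusion then forces $\mathcal{G}\notin\up\pt(U)$. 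Since $U\in\mathcal{F}$ means $\mathcal{F}\in\pt(U)$, the open $\pt(U)$ is the required neighbourhood of $\mathcal{F}$ witnessing the $T_\mathrm{U}$-ordered condition. The lower case for $\pt^{\down}(X)$ is entirely symmetric: unpack $\mathcal{F}\nleqslant_\mathrm{L}\mathcal{G}$ to obtain $V\in\mathcal{G}$ with $\Down V\notin\mathcal{F}$, then use the downset inclusion $\down\pt(V)\subseteq\pt(\Down V)$ from the same lemma to conclude $\mathcal{F}\notin\down\pt(V)$.

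The proof is short and I do not anticipate a genuine obstacle, since all the machinery is already in place; the only thing to be careful about is matching the correct half of the point order to the correct half of the cone-containment lemma, and confirming that the upper order on points (defined just before this lemma via $\mathcal{F}\leqslant_\mathrm{U}\mathcal{G}$ iff $\Up U\in\mathcal{G}$ for all $U\in\mathcal{F}$) lines up cleanly with the $T_\mathrm{U}$-ordered definition so that the single witness $U$ does all the work. I would therefore write the proof for the upper case in full and remark that the lower case is analogous, exactly as the paper does for its companion lemmas.

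\begin{proof}
  We prove the $T_\mathrm{U}$-ordered claim for $\pt^{\up}(X)$; the $T_\mathrm{L}$-ordered claim for $\pt^{\down}(X)$ is analogous. Suppose $\mathcal{F}\nleqslant_\mathrm{U}\mathcal{G}$. By definition of the upper order on points, this means there exists $U\in\mathcal{F}$ such that $\Up U\notin\mathcal{G}$. The latter says precisely that $\mathcal{G}\notin\pt(\Up U)$, so the inclusion $\up\pt(U)\subseteq\pt(\Up U)$ from \cref{lemma:cone in pt is contained in localic cone} yields $\mathcal{G}\notin\up\pt(U)$. Since $U\in\mathcal{F}$, the open $\pt(U)$ is a neighbourhood of $\mathcal{F}$, and it witnesses the $T_\mathrm{U}$-ordered condition.
\end{proof}
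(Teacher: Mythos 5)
Your proof is correct and is exactly the paper's intended argument: the paper gives no separate proof here, merely remarking that one generalises \cref{lemma:pt(X) is T0-ordered}, and your restriction of that proof to its upper (resp.\ lower) half is precisely that generalisation. One small point of care: the upset $\up\pt(U)$ appearing in the $T_\mathrm{U}$-ordered condition for $\pt^{\up}(X)$ is taken with respect to $\leqU$, which is coarser than the Egli--Milner order $\leq$ of \cref{construction:order on pt} for which \cref{lemma:cone in pt is contained in localic cone} is literally stated, so the inclusion $\up\pt(U)\sqleq\pt(\Up U)$ should strictly be re-derived for $\leqU$; this is immediate, since $\mathcal{F}\leqU\mathcal{G}$ and $U\in\mathcal{F}$ give $\Up U\in\mathcal{G}$ directly by definition, so the argument goes through unchanged.
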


Putting it all together gives two new dualities.

\begin{theorem}\label{theorem:duality upperlower}
	The adjunctions of \cref{theorem:adjunction upperlower} restrict to equivalences between the categories of ordered locales and upper/lower monotone morphisms satisfying the upper/lower version of~\eqref{axiom:pt}, and the categories of sober $T_\mathrm{U}$/$T_\mathrm{L}$-ordered spaces with open upper/lower cones.
\end{theorem}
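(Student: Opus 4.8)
The plan is to mirror the proof of \cref{theorem:stone duality}, splitting each ingredient into its upper and lower half; I will describe the upper case, the lower being entirely analogous (replacing $\up$, $\LeqU$, $T_\mathrm{U}$ and the upper version of~\eqref{axiom:pt} by their lower counterparts throughout). Recall that an adjunction restricts to an equivalence on the full subcategories of objects at which the unit and counit are isomorphisms. Since an isomorphism in $\OrdTop$ (respectively $\OrdLoc^{\Up}$) is exactly an underlying isomorphism of $\Top$ (respectively $\Loc$) whose inverse is again monotone (respectively upper monotone), I would establish each equivalence by identifying these fixed objects and then checking that both functors preserve the defining conditions of the subcategories.

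On the locale side, the counit $\varepsilon_X$ is upper monotone since it is the counit of the adjunction of \cref{theorem:adjunction upperlower}, and for a \emph{spatial} $X$ it is a locale isomorphism by classical Stone duality~\cite{maclane1994sheaves} whose inverse is upper monotone by the upper half of \cref{lem:counitiso}---that is, by \cref{lem:axiomL}(a) via \cref{corollary:identities are monotone}. On the space side, the unit $\eta_S$ is monotone by \cref{theorem:adjunction upperlower}, and for a \emph{sober} $S$ with open upper cones it is a homeomorphism; \cref{lemma:unit iso upperlower} then shows that $\eta_S^{-1}$ is monotone precisely when $S$ is $T_\mathrm{U}$-ordered. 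Hence the fixed objects are the spatial ordered locales satisfying the upper version of~\eqref{axiom:pt} on one side, and the sober $T_\mathrm{U}$-ordered spaces with open upper cones on the other.

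It remains to verify that the functors restrict to these subcategories. The functor $\Opens^{\up}$ sends a sober space---which automatically has enough points---to a spatial locale, and this locale satisfies the upper version of~\eqref{axiom:pt} by the upper half of \cref{lemma:space with enough points satisfies pt axiom}. Conversely $\pt^{\up}$ sends a spatial ordered locale satisfying the upper version of~\eqref{axiom:pt} to a space that is sober (hence has enough points), has open upper cones by the upper half of \cref{lemma:ordered locale with axiom pt has open cones}, and is $T_\mathrm{U}$-ordered by the lemma immediately preceding this theorem.

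The work here is bookkeeping rather than a genuine obstacle: the one point needing care is to confirm that every lemma invoked in \cref{theorem:stone duality} was in fact proved in two separable single-sided versions, so that no step secretly relies on the combined Egli-Milner order. Granting this, the displayed facts show that on the stated subcategories the unit and counit restrict to natural isomorphisms in the ordered categories, which yields the two equivalences.
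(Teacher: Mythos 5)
Your proposal is correct and follows essentially the same route as the paper, which leaves this theorem without an explicit proof precisely because it is the evident upper/lower split of \cref{theorem:stone duality}: identify the fixed objects of the adjunctions of \cref{theorem:adjunction upperlower} via \cref{lemma:unit iso upperlower}, the single-sided analogue of \cref{lem:counitiso}, and the preceding lemma on $\pt^{\up}(X)$ being $T_\mathrm{U}$-ordered. Your closing caveat about checking that each cited lemma genuinely separates into one-sided halves is exactly the bookkeeping the paper waves through with its remark that ``all proofs above can be separated into analogous upper and lower parts.''
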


\section{Relation to Esakia duality}\label{section:esakia duality}

Priestley and Esakia spaces arise as topological representations of distributive lattices \cite{priestley:lattices} and Heyting algebras \cite{esakia1974topologicalKripkemodels}, respectively. This section shows how Esakia duality arises from our adjunctions. We recall basic definitions, but refer to the literature for detail: see \cite{bezhanishvili2010BitopologicalDualityDistributive} for a detailed treatment from a bitopological point of view, \cite{landsman2021LogicQuantumMechanics} for an expository account, and the appendix of \cite{davey2003CoalgebraicViewHeyting} for a detailed proof of Esakia duality.

\begin{definition}
	A \emph{Priestley space} is an ordered space $(S,\leq)$ where $S$ is compact, and the \emph{Priestley separation axiom} holds: if $x\nleqslant y$ there exists a clopen upset $U$ such that $x\in U$ and $y\notin U$. 
\end{definition}

Priestley spaces are also known as \emph{ordered Stone spaces}, since any Priestley space is a Stone space by \emph{e.g.}\ \cite[Lemma~3.2]{bezhanishvili2010BitopologicalDualityDistributive}. Hence, importantly here, every Priestley space is sober. 

\begin{lemma}\label{lemma:priestley spaces are TU and TL ordered}
	Any Priestley space is $T_\mathrm{U}$- and $T_\mathrm{L}$-ordered.
\end{lemma}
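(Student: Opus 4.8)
The plan is to show the Priestley separation axiom directly implies both $T_\mathrm{U}$- and $T_\mathrm{L}$-ordered. Recall that a space is $T_\mathrm{U}$-ordered if $x \nleqslant y$ yields an open neighbourhood $U$ of $x$ with $y \notin \up U$, and $T_\mathrm{L}$-ordered if $x \nleqslant y$ yields an open neighbourhood $V$ of $y$ with $x \notin \down V$. So I would start from $x \nleqslant y$ and invoke the Priestley separation axiom to obtain a clopen upset $U$ with $x \in U$ and $y \notin U$.

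First I would verify the $T_\mathrm{U}$-ordered condition. The set $U$ is open (being clopen) and contains $x$, so it is a valid candidate neighbourhood. The key observation is that since $U$ is an \emph{upset}, we have $\up U = U$. Therefore $y \notin U = \up U$, which is exactly what $T_\mathrm{U}$-orderedness demands. This step is essentially immediate once one notes that an upset is its own upper cone.

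Next I would handle the $T_\mathrm{L}$-ordered condition, which requires a little more care because we are given an upset separating $x$ from $y$, but now we need a neighbourhood of $y$ excluding $x$ from its downset. The trick is to pass to the complement: since $U$ is a clopen upset, its complement $V := S \setminus U$ is a clopen \emph{downset}. Because $y \notin U$ we have $y \in V$, so $V$ is an open neighbourhood of $y$; and because $x \in U$ we have $x \notin V$. Since $V$ is a downset, $\down V = V$, and hence $x \notin V = \down V$, establishing $T_\mathrm{L}$-orderedness.

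The proof is short and I do not anticipate a genuine obstacle; **the main point to get right** is the complementation step, namely that the complement of a clopen upset is a clopen downset (downward closure of $V$ follows because $U$ upward closed means its complement is downward closed), together with the reminder that upsets and downsets satisfy $\up U = U$ and $\down V = V$ respectively, which is what collapses the cone condition to a plain membership statement.
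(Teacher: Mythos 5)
Your proof is correct and follows exactly the same route as the paper: invoke the Priestley separation axiom to get a clopen upset $U$ containing $x$ but not $y$, note $\up U = U$ for the $T_\mathrm{U}$ condition, and pass to the complementary clopen downset for the $T_\mathrm{L}$ condition. No issues.
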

\begin{proof}
	If $x\nleqslant y$, then by the Priestley separation axiom there exists a clopen upset $U$ with $x\in U$ but $y\notin U$. Since $U$ is an upset we get $\up U = U$, so this gives the $T_\mathrm{U}$-ordered condition. Further, the complement of $U$ is a clopen downset that contains $y$ but not $x$, giving the $T_\mathrm{L}$-ordered condition.
\end{proof}

Using \cite[Theorem~4.2]{davey2003CoalgebraicViewHeyting} we can adapt the definition of Esakia spaces in terms of open cone conditions as follows.

\begin{definition}\label{definition:esakia spaces}
	A Priestley space is called:
		\begin{itemize}
			\item an \emph{Esakia space} if it has open lower cones;
			\item a \emph{co-Esakia space} if it has open upper cones;
			\item a \emph{bi-Esakia space} if it has open cones.
		\end{itemize}
\end{definition}
The definition of morphisms between Esakia spaces are slightly more involved, essentially due to the fact that they have to ensure the induced map between Heyting algebras respects the Heyting implication. For us, it suffices to know that morphisms between Esakia spaces are, in particular, continuous and monotone. Refer to \cite[Section~7]{bezhanishvili2010BitopologicalDualityDistributive} for more details. Denote by $\Esakia$, $\coEsakia$, and $\biEsakia$ the categories of Esakia-, co-Esakia-, and bi-Esakia spaces, respectively. Similarly, denote by $\Heyt$, $\coHeyt$, and $\biHeyt$ the categories of Heyting-, co-Heyting-, and bi-Heyting algebras, respectively. \emph{Esakia duality} says that there are equivalences of categories:
\[
  \Esakia \cong \Heyt\op
  \qquad\qquad
  \coEsakia \cong \coHeyt\op
  \qquad\qquad
  \biEsakia \cong \biHeyt\op
\]
\Cref{definition:esakia spaces} lets us interpret Esakia spaces as ordered spaces with open cones.

\begin{lemma}\label{lemma:esakia spaces include in ordtop open lower cones}
	There are inclusion functors:
		\[
			\Esakia \hookrightarrow \OrdTop_{\OC}^{\down\bullet}
			\qquad
			\coEsakia \hookrightarrow \OrdTop_{\OC}^{\up\bullet}
			\qquad
			\biEsakia \hookrightarrow \OrdTop_{\OC}^\bullet
		\]
\end{lemma}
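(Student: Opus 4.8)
The plan is to define each inclusion on objects by sending an Esakia (respectively co-Esakia, bi-Esakia) space to its underlying ordered space, and on morphisms by sending an Esakia morphism to its underlying continuous monotone function; then to verify that these assignments land in the claimed target categories and are functorial. Since the three cases are entirely parallel, I would treat the co-Esakia inclusion $\coEsakia \hookrightarrow \OrdTop_{\OC}^{\up\bullet}$ in detail and remark that the Esakia and bi-Esakia cases follow by reading ``upper'' as ``lower'' or ``both''.

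For the object part, the open cone condition is immediate: by \Cref{definition:esakia spaces} a co-Esakia space is by definition a Priestley space with open upper cones, so it is already an object of $\OrdTop_{\OC}^{\up}$. It remains to supply the decoration $\bullet$, that is, to check that it has enough points. Here I would use that every Priestley space is a Stone space and hence sober (as recalled just before \Cref{lemma:priestley spaces are TU and TL ordered}), together with the standard fact that sobriety is equivalent to being $T_0$ and having enough points; in particular a sober space has enough points. Thus every co-Esakia space lies in $\OrdTop_{\OC}^{\up\bullet}$, and the analogous reasoning places Esakia and bi-Esakia spaces in $\OrdTop_{\OC}^{\down\bullet}$ and $\OrdTop_{\OC}^{\bullet}$ respectively.

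For the morphism part, I would invoke the fact recorded in the text that morphisms of Esakia spaces are, in particular, continuous and monotone; hence each such morphism is a legitimate morphism of $\OrdTop$, and a fortiori of the relevant full subcategory. Functoriality is then formal: both source and target categories have (certain) functions as their morphisms and composition is ordinary function composition, so the inclusion sends identities to identities and respects composition on the nose, and it is faithful because a morphism is determined by its underlying function.

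The content of the lemma is thus almost entirely a matter of assembling cited facts, and I do not expect a genuine obstacle. If anything requires care, it is the ``enough points'' clause, but this is not a separate computation: it is subsumed by the sobriety of Priestley spaces via the decomposition of sobriety into $T_0$-ness and enough points. One should merely take care not to demand $T_0$ of the target categories---they require only the open cone condition and enough points---so that no separation hypothesis beyond what Priestley spaces already guarantee is needed.
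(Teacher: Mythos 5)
Your proposal is correct and matches the argument the paper leaves implicit: the open cone clause is part of \Cref{definition:esakia spaces}, enough points follows from the sobriety of Priestley spaces, and the morphism clause is exactly the remark that Esakia morphisms are continuous and monotone. Nothing further is needed.
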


Moreover, the following proposition shows that these inclusion functors land exactly in the respective fixed points.

\begin{proposition}\label{proposition:esakia spaces are fixed points}
	The following holds:
		\begin{itemize}
			\item any Esakia space $S$ is isomorphic to $\pt^{\down}(\Opens^{\down} S)$;
			\item any co-Esakia space $S$ is isomorphic to $\pt^{\up}(\Opens^{\up}S)$;
			\item any bi-Esakia space $S$ is isomorphic to $\pt(\Opens S)$.
		\end{itemize}
\end{proposition}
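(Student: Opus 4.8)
The plan is to recognise each of the three kinds of Esakia space as a \emph{fixed point} of the corresponding adjunction, so that the relevant unit becomes an isomorphism. Concretely, it suffices to check that an Esakia space lies in the space-side subcategory appearing in the restricted equivalence of \cref{theorem:duality upperlower} (for the lower and upper cases) or \cref{theorem:stone duality} (for the bi case); the equivalence then immediately yields $S \cong \pt^{\down}(\Opens^{\down} S)$, $S \cong \pt^{\up}(\Opens^{\up} S)$, and $S \cong \pt(\Opens S)$ respectively, with the isomorphism realised by the unit $\eta_S$. Alternatively one can argue directly, using the unit lemmas (\cref{proposition:unit is monotone,lem:unitiso,lemma:unit iso upperlower}) to see that $\eta_S$ and its inverse are both monotone, but invoking the equivalences is cleaner.

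The space-side subcategories are, respectively, the sober $T_\mathrm{L}$-ordered spaces with open lower cones, the sober $T_\mathrm{U}$-ordered spaces with open upper cones, and the sober $T_0$-ordered spaces with open cones. I would verify membership by assembling three facts. First, every Priestley space is a Stone space and hence sober, as recorded in the text preceding \cref{lemma:priestley spaces are TU and TL ordered}. Second, the open cone conditions are built into \cref{definition:esakia spaces}: an Esakia space has open lower cones, a co-Esakia space open upper cones, and a bi-Esakia space both. Third, \cref{lemma:priestley spaces are TU and TL ordered} shows every Priestley space is both $T_\mathrm{U}$- and $T_\mathrm{L}$-ordered; since either of these implies the $T_0$-ordered condition, all three separation hypotheses are met in the bi case as well.

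Putting these together, an Esakia space is a sober $T_\mathrm{L}$-ordered space with open lower cones, hence a fixed point of the lower adjunction, and symmetrically for the other two cases. Since soberness guarantees enough points, such a space already lies in the domain of the adjunctions of \cref{theorem:adjunction upperlower,thm:adjunction}, so the restricted equivalences apply and give the claimed isomorphisms. There is no genuine obstacle here: the argument is purely a matter of checking that the defining conditions of Esakia spaces coincide with the fixed-point conditions already isolated in the preceding sections, the only input needing external justification being the soberness of Priestley spaces.
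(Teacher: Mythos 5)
Your proposal is correct and follows essentially the same route as the paper, which likewise combines the soberness of Priestley spaces with \cref{lemma:priestley spaces are TU and TL ordered} and the unit-isomorphism lemmas (\cref{lemma:unit iso upperlower,lem:unitiso}); whether one phrases this as ``membership in the fixed-point subcategory of the restricted equivalence'' or as a direct check that $\eta_S$ and $\eta_S^{-1}$ are both monotone is only a difference of packaging.
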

\begin{proof}
	This follows from the fact that Priestley spaces are sober, and by combining \cref{lemma:unit iso upperlower,lemma:priestley spaces are TU and TL ordered}.
\end{proof}

%A \emph{Heyting algebra} is a bounded lattice $L$ with a Heyting implication operator characterised in the usual way. This is not extra \emph{structure} on the lattice, but rather a \emph{property}, since Heyting implication is uniquely determined if it exists. Also note we do \emph{not} assume $L$ is complete! Heyting algebras are automatically distributive lattices. By \cite[Theorem~7.2]{bezhanishvili2010BitopologicalDualityDistributive}, there are dual equivalences between (co/bi)-Esakia spaces and (co/bi)-Heyting algebras. Using this, we define a functor $\Heyt\op \to \OrdLoc^{\Down\bullet}$ by sending a Heyting algebra $L$ to its induced Esakia space $(\pf(L),\subseteq)$, which hence gives an ordered locale $(\Opens(\pf(L)),\LeqL)$. {\color{blue}Is this the same as the free completion of $L$?} Hence we want to investigate the following diagram:

We now exhibit one sense in which Esakia duality fits into our framework. To obtain an Esakia space from a Heyting algebra, one constructs the space of prime filters, suitably topologised. We denote the resulting functor by ${\pf\colon \Heyt\op\to\Esakia}$, and similarly for the other varieties of Esakia spaces. Conversely, the lattice of clopen upsets of an Esakia space defines a Heyting algebra, and we denote the resulting functor by $\Clopup\colon\Esakia\to \Heyt\op$. Consider the following diagrams:
	\[
	\begin{tikzcd}[column sep = 1cm]%
		\Esakia \arrow[d,hookrightarrow]\arrow[r, "\Clopup"{name= A}, shift left=2]
		& 
		\Heyt\op  \arrow[l, "\pf"{name=B}, shift left=2]\arrow[d, "\Opens^{\down}\circ \pf"]
		\\
		\OrdTop_{\OC}^{\down\bullet} \arrow[r, "\Opens^{\down}"{name= X}, shift left=2] 
		&
		\OrdLoc^{\Down\bullet}
		\arrow[l, "\pt^{\down}"{name=Y}, shift left=2]
		\ar[phantom,from=X, to=Y,  "\vdash" rotate=90]
	\end{tikzcd}
\quad
	\begin{tikzcd}[column sep = 1cm]%
		\coEsakia \arrow[d,hookrightarrow]\arrow[r, "\Clopup"{name= A}, shift left=2]
		& 
		\coHeyt\op  \arrow[l, "\pf"{name=B}, shift left=2]\arrow[d, "\Opens^{\up}\circ \pf"]
		\\
		\OrdTop_{\OC}^{\up\bullet} \arrow[r, "\Opens^{\up}"{name= X}, shift left=2] 
		&
		\OrdLoc^{\Up\bullet}
		\arrow[l, "\pt^{\up}"{name=Y}, shift left=2]
		\ar[phantom,from=X, to=Y,  "\vdash" rotate=90]
	\end{tikzcd}
	\]%
	\[
	\begin{tikzcd}[column sep = 1cm]%
		\biEsakia \arrow[d,hookrightarrow]\arrow[r, "\Clopup"{name= A}, shift left=2]
		& 
		\biHeyt\op  \arrow[l, "\pf"{name=B}, shift left=2]\arrow[d, "\Opens\circ \pf"]
		\\
		\OrdTop_{\OC}^{\bullet} \arrow[r, "\Opens"{name= X}, shift left=2] 
		&
		\OrdLoc^{\bullet}
		\arrow[l, "\pt"{name=Y}, shift left=2]
		\ar[phantom,from=X, to=Y,  "\vdash" rotate=90]
	\end{tikzcd}
	\]

Here $\Opens^{\down}\circ\pf\colon\Heyt\op\to\OrdLoc^{\Down\bullet}$ lands in the fixed points of $\OrdLoc^{\Down\bullet}$, since it is in the image of $\Opens^{\down}$. The following shows that the left- and right-directed squares in the above diagrams commute up to isomorphism.
\begin{proposition}
	There are natural isomorphisms:
		\begin{align*}
			\pf & \cong \pt^{\down}\circ \Opens\nolimits^{\down}\circ \pf
			& \Opens\nolimits^{\down}\circ \pf\circ \Clopup &\cong \Opens\nolimits^{\down} \\
			\pf & \cong \pt^{\up}\circ \Opens\nolimits^{\up}\circ \pf
			& \Opens\nolimits^{\up}\circ \pf\circ \Clopup&\cong \Opens\nolimits^{\up} \\
			\pf & \cong \pt\circ \Opens\circ \pf
			& \Opens\circ \pf\circ \Clopup & \cong \Opens
		\end{align*}
\end{proposition}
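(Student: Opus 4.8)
The plan is to obtain every isomorphism by \emph{whiskering} two already-available natural isomorphisms with the functors appearing in the relevant diagram, so that the genuine content sits entirely in Esakia duality together with the fixed-point result \cref{proposition:esakia spaces are fixed points}. Throughout, write $\iota$ for the inclusion functor $\Esakia\hookrightarrow\OrdTop_{\OC}^{\down\bullet}$ of \cref{lemma:esakia spaces include in ordtop open lower cones} (and its upper and bi analogues), which is implicit wherever $\pf$ is composed with $\Opens^{\down}$; I would treat the three flavours uniformly and spell out only the lower case, the others being obtained by the substitutions $\Opens^{\up},\pt^{\up}$ and $\Opens,\pt$ together with the corresponding clauses of \cref{proposition:esakia spaces are fixed points}.

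For the left-hand isomorphism $\pf\cong\pt^{\down}\circ\Opens^{\down}\circ\pf$, recall that the unit $\eta$ of the adjunction in \cref{theorem:adjunction upperlower} is a natural transformation $\mathrm{Id}\Rightarrow\pt^{\down}\circ\Opens^{\down}$ on $\OrdTop_{\OC}^{\down\bullet}$. By \cref{proposition:esakia spaces are fixed points}, every Esakia space $S$ has $\eta_S$ an isomorphism, so the restriction $\eta\iota\colon\iota\Rightarrow\pt^{\down}\circ\Opens^{\down}\circ\iota$ is a natural \emph{iso}morphism. Whiskering on the right with $\pf\colon\Heyt\op\to\Esakia$ then yields the natural isomorphism $\eta\iota\pf\colon\iota\pf\Rightarrow\pt^{\down}\circ\Opens^{\down}\circ\iota\pf$, which is exactly the claim once $\iota$ is left implicit. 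Both naturality and the isomorphism property are preserved by whiskering, so nothing further is required.

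For the right-hand isomorphism $\Opens^{\down}\circ\pf\circ\Clopup\cong\Opens^{\down}$, I would invoke Esakia duality directly: the functors $\pf$ and $\Clopup$ are mutually quasi-inverse, giving a natural isomorphism $\pf\circ\Clopup\cong\mathrm{Id}_{\Esakia}$. Whiskering this on the left with $\Opens^{\down}\circ\iota\colon\Esakia\to\OrdLoc^{\Down\bullet}$ produces $\Opens^{\down}\circ\iota\circ\pf\circ\Clopup\cong\Opens^{\down}\circ\iota$, which is the stated isomorphism (again with $\iota$ suppressed). Naturality with respect to Esakia morphisms is automatic, since $\iota$ sends these to continuous monotone maps and both $\eta$ and the Esakia-duality isomorphism are natural with respect to all such maps.

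I do not expect a genuine obstacle: once \cref{proposition:esakia spaces are fixed points} and Esakia duality are in hand, the proposition is a purely $2$-categorical bookkeeping exercise. The only points requiring care are (i) keeping track of the inclusion $\iota$, so that the source and target categories of each whiskered transformation match the types asserted in the statement, and (ii) confirming the functors land where the diagram demands — namely that $\Opens^{\down}\circ\pf$ lands in the fixed points of $\OrdLoc^{\Down\bullet}$, because it factors through the image of $\Opens^{\down}$ as noted before the proposition, and that $\iota\pf$ lands in the fixed points of $\OrdTop_{\OC}^{\down\bullet}$, which is precisely the statement of \cref{proposition:esakia spaces are fixed points}.
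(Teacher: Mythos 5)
Your proposal is correct and follows essentially the same route as the paper: the left-hand isomorphisms come from \cref{proposition:esakia spaces are fixed points} (you make explicit that the object-wise isomorphisms there are the unit components, hence assemble into a natural isomorphism after whiskering with $\pf$), and the right-hand ones come from whiskering the Esakia-duality isomorphism $\pf\circ\Clopup\cong\mathrm{Id}$ with $\Opens^{\down}$ (respectively $\Opens^{\up}$, $\Opens$). The paper states this in one line; your version merely spells out the $2$-categorical bookkeeping.
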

\begin{proof}
	The latter isomorphisms follow immediately from the fact that $\pf$ and $\Clopup$ are mutually inverse up to isomorphism, and the former from \cref{proposition:esakia spaces are fixed points}.
\end{proof}

\section{Conclusion}\label{sec:conclusion}

To conclude, we raise directions for further research.
\begin{itemize}
  \item Axiom~\eqref{axiom:pt} in \cref{lemma:ordered locale with axiom pt has open cones}~is a somewhat \emph{ad hoc} solution to ensure open cones in $\pt(X)$. This condition, albeit natural, is unsatisfactory from a localic perspective, since it is stated in terms of points, instead of purely in terms of opens. Note that the definition of an ordered locale actually does not axiomatise ordered spaces with open cones, but instead axiomatises \emph{arbitrary} ordered spaces: the open cone assumption is never used in the proof of \cref{lemma:ordered spaces are ordered locales}. There are several natural candidates (cf. \cref{example:failure of Lambda}), but currently we are not aware of an additional axiom for ordered locales, purely stated in terms of opens, that could replace the need for axiom~\eqref{axiom:pt}.
  One may be similarly dissatisfied with the need to restrict to spaces with enough points (\cref{lemma:space with enough points satisfies pt axiom}). In \cref{example:ordered space without enough points} we saw examples of ordered spaces \emph{without} enough points that nevertheless satisfy axiom~\eqref{axiom:pt}. This shows that the enough points assumption is not necessary. Future work could elucidate both of these restrictions and generalise the adjunction.
  
  \item Ordered spaces are an important source of examples in (directed) algebraic topology, the study of the \emph{fundamental category} of a (directed) topological space, which consists of homotopy classes of (directed) paths between points~\cite{grandis2009DirectedAlgebraicTopology}. In the undirected case, there are good notions of fundamental category geared towards locales rather than spaces~\cite{kennison1989WhatFundamentalGroup}. Our results could be used to develop a good notion of fundamental category of an ordered locale.

  \item Ordered spaces generalise to locally ordered spaces, also called \emph{streams}~\cite{krishnan2009ConvenientCategoryLocally}, which arise often in concurrent computation. Our results could similarly be generalised to `localic streams', which entails lifting the adjunction to categories of cosheaves.

  Ordered spaces also generalise to topological categories. Similarly, categorical versions of the Egli-Milner order exist~\cite{lehmann}. Our results could be generalised to \emph{sites}. This also raises the question of what the natural notion of \emph{sheaf} is on an ordered locale. Given such a notion, our framework could be used to study \emph{contextuality} \cite{gogioso2021SheafTheoreticStructureDefinite}.
	
  \item Ordered spaces form the basis of structural approaches to \emph{causality} in quantum gravity~\cite{kronheimer1967structure,christensen2005CausalSitesQuantum}. Our results could form the basis for a pointfree approach to causality in quantum gravity.

  Similarly, ordered spaces can satisfy many properties of the so-called \emph{causal ladder} that make them more well-behaved as models of physical spacetime~\cite{minguzzi2019LorentzianCausalityTheory}. For example, antisymmetry of the preorder relates to the existence of closed causal curves. Our results could form a starting point to find pointfree analogues of such properties.

  \item \emph{Quantales} are a noncommutative generalisation of locales, which are used in the study of \emph{observational logic} \cite{abramsky1993QuantalesObservationalLogic} and \emph{resource theories} \cite{gonda2021ResourceTheoriesQuantaleModules}. Our ordered locales could generalise to quantales, to help study causality in those settings.
  
  \item Locales can be categorified to monoidal categories via \emph{tensor topology}~\cite{enriquemoliner2020TensorTopology,enriqueMoliner2017space}. Our results justify looking for a structure on a monoidal category that decategorifies to an ordered locale. Similarly, relations to \emph{monoidal topology}~\cite{tholen2009OrderedTopologicalStructures,hofmann2014MonoidalTopologyCategorical} and \emph{promonoidal categories} \cite{hefford2022PrePromonoidalStructure} could be investigated. 

  \item Ordered spaces are sometimes also regarded as \emph{bitopological spaces}, as the order induces a second topology. There is a notion of biframe, and a corresponding categorical duality~\cite{schauerte}. The relationship to our results should be investigated, in particular in light of the bitopological treatment of Esakia duality in \cite{bezhanishvili2010BitopologicalDualityDistributive}.
  
  % \item \todo{Free construction $\Heyt\op \to \Loc\to \OrdLoc$ vs. what we have?}
  \item A notion of \emph{ordered locale} $X$ has also been defined in \cite{townsend1997LocalicPriestleyDuality} and \cite[Chapter~5]{townsend1996preframeTechniquesLocales} in terms of closed sublocales of $X\times X$, where $X$ is a compact Hausdorff locale. This allows for a constructive treatment of Priestley duality. Besides the restriction to compact Hausdorff locales, this type of order structure is apparently different from our preorder relation $\Leq$ on the frame of opens of a locale. The relation between the two notions could be investigated.
\end{itemize}

\bibliographystyle{plain}
\bibliography{bibliography}

\begin{thebibliography}{10}

\bibitem{abramsky1993QuantalesObservationalLogic}
S.~Abramsky and S.~Vickers.
\newblock Quantales, observational logic and process semantics.
\newblock {\em Mathematical Structures in Computer Science}, 3(2):161--227,
  1993.

\bibitem{bartels2011soberExceptNotT0}
T.~Bartels.
\newblock Sober except not {$T_0$}?
\newblock MathOverflow, 2011.
\newblock \url{https://mathoverflow.net/q/72217}.

\bibitem{bezhanishvili2010BitopologicalDualityDistributive}
G.~Bezhanishvili, N.~Bezhanishvili, D.~Gabelaia, and A.~Kurz.
\newblock Bitopological duality for distributive lattices and {{Heyting}}
  algebras.
\newblock {\em Mathematical Structures in Computer Science}, 20(3):359--393,
  June 2010.

\bibitem{sorkinetal:causalset}
L.~Bombelli, J.~Lee, D.~Meyer, and R.~D. Sorkin.
\newblock Space-time as a causal set.
\newblock {\em Physical Review Letters}, 59:521, 1987.

\bibitem{christensen2005CausalSitesQuantum}
J.D. Christensen and L.~Crane.
\newblock Causal sites as quantum geometry.
\newblock {\em Journal of Mathematical Physics}, 46(12):122502, 2005.

\bibitem{chrusciel2012LorentzianCausalityContinuous}
P.~T. Chruściel and J.~D.~E. Grant.
\newblock On {{Lorentzian}} causality with continuous metrics.
\newblock {\em Classical and Quantum Gravity}, 29(14):145001, 2012.

\bibitem{davey2003CoalgebraicViewHeyting}
B.~A. Davey and J.~C. Galati.
\newblock A {{Coalgebraic View}} of {{Heyting Duality}}.
\newblock {\em Studia Logica}, 75(3):259--270, December 2003.

\bibitem{enriqueMoliner2017space}
P.~{Enrique Moliner}, C.~{Heunen}, and S.~{Tull}.
\newblock {Space in Monoidal Categories}.
\newblock In {\em Quanum Physics and Logic}, volume 266 of {\em Electronic
  Proceedings in Theoretical Computer Science}, pages 399--410, 2017.

\bibitem{enriquemoliner2020TensorTopology}
P.~{Enrique Moliner}, C.~Heunen, and S.~Tull.
\newblock Tensor topology.
\newblock {\em Journal of Pure and Applied Algebra}, 224(10):106378, 2020.

\bibitem{esakia1974topologicalKripkemodels}
L.~L. Esakia.
\newblock Topological {K}ripke models.
\newblock {\em Doklady Akademii Nauk SSSR}, 214(2):298--301, 1974.

\bibitem{escardo:synthetic}
M.~Escard{\'o}.
\newblock {\em Synthetic topology}, volume~87 of {\em Electronic Proceedings in
  Theoretical Computer Science}.
\newblock Elsevier, 2004.

\bibitem{fajstrup2016DirectedAlgebraicTopology}
L.~Fajstrup, E.~Goubault, E.~Haucourt, S.~Mimram, and M.~Raussen.
\newblock {\em Directed {{Algebraic Topology}} and {{Concurrency}}}.
\newblock Springer, 2016.

\bibitem{forrest1996OntologyTopologyTheory}
P.~Forrest.
\newblock From ontology to topology in the theory of regions.
\newblock {\em The Monist}, 79(1):34--50, 1996.

\bibitem{gierzetal:domains}
G.~Gierz, K.~H. Hofmann, K.~Keimel, J.~D. Lawson, M.~W. Mislove, and D.~S.
  Scott.
\newblock {\em Continuous lattices and domains}.
\newblock Cambridge University Press, 2003.

\bibitem{gogioso2021SheafTheoreticStructureDefinite}
S.~Gogioso and N.~Pinzani.
\newblock The sheaf-theoretic structure of definite causality.
\newblock {\em Proceedings 18th International Conference on Quantum Physics and
  Logic}, 2021.

\bibitem{gonda2021ResourceTheoriesQuantaleModules}
T.~Gonda.
\newblock {\em Resource Theories as Quantale Modules}.
\newblock PhD thesis, University of Waterloo, 2021.

\bibitem{grandis2009DirectedAlgebraicTopology}
M.~Grandis.
\newblock {\em Directed Algebraic Topology}.
\newblock {Cambridge University Press}, 2009.

\bibitem{grant2020FutureNotAlwaysOpen}
J.~D.~E. Grant, M.~Kunzinger, C.~Sämann, and R.~Steinbauer.
\newblock The future is not always open.
\newblock {\em Letters in Mathematical Physics}, 110(1):83--103, 2020.

\bibitem{haucourt2009ComparingTopologicalModels}
E.~Haucourt.
\newblock Comparing {{Topological Models}} for {{Concurrency}}.
\newblock {\em Electronic Notes in Theoretical Computer Science}, 230:111--127,
  2009.

\bibitem{hefford2022PrePromonoidalStructure}
J.~Hefford and A.~Kissinger.
\newblock On the pre- and promonoidal structure of spacetime.
\newblock arXiv:2206.09678, 2022.

\bibitem{hofmann2014MonoidalTopologyCategorical}
D.~Hofmann, G.~J. Seal, and W.~Tholen, editors.
\newblock {\em Monoidal topology}.
\newblock {Cambridge University Press}, 2014.

\bibitem{johnstone1982StoneSpaces}
P.~T. Johnstone.
\newblock {\em Stone Spaces}.
\newblock {Cambridge university press}, 1982.

\bibitem{johnstone:point}
P.~T. Johnstone.
\newblock The point of pointless topology.
\newblock {\em Bulletin of the American Mathematical Society}, 8(1):41--53,
  1983.

\bibitem{johnstone2002elephant}
P.~T. Johnstone.
\newblock {\em Sketches of an {{Elephant}}}.
\newblock {Oxford University Press}, 2002.

\bibitem{jonssontarski}
B.~J{\'o}nsson and A.~Tarski.
\newblock Boolean algebras with operators.
\newblock {\em American Journal of Mathematics}, 73(4):891--939, 1951.

\bibitem{kennison1989WhatFundamentalGroup}
J.~F. Kennison.
\newblock What is the fundamental group?
\newblock {\em Journal of Pure and Applied Algebra}, 59(2):187--200, 1989.

\bibitem{krishnan2009ConvenientCategoryLocally}
S.~Krishnan.
\newblock A convenient category of locally preordered spaces.
\newblock {\em Applied Categorical Structures}, 17(5):445--466, 2009.

\bibitem{kronheimer1967structure}
E.~H. Kronheimer and R.~Penrose.
\newblock On the structure of causal spaces.
\newblock {\em Mathematical Proceedings of the Cambridge Philosophical
  Society}, 63(2):481--501, 1967.

\bibitem{kunzi2005TiorderedReflections}
H.-P.~A. Künzi and T.~A. Richmond.
\newblock {$T_i$}-ordered reflections.
\newblock {\em Applied General Topology}, 6(2):207--216, 2005.

\bibitem{landsman2021LogicQuantumMechanics}
N.~P. Landsman.
\newblock The {{Logic}} of {{Quantum Mechanics}} ({{Revisited}}).
\newblock In Gabriel Catren and Mathieu Anel, editors, {\em New {{Spaces}} in
  {{Physics}}: {{Formal}} and {{Conceptual Reflections}}}, volume~2, pages
  85--113. {Cambridge University Press}, {Cambridge}, 2021.

\bibitem{landsman2021GR}
N.P. Landsman.
\newblock {\em Foundations of General Relativity}.
\newblock Radboud University Press, 2021.

\bibitem{lehmann}
D.~J. Lehmann.
\newblock {\em Categories for fixpoint semantics}.
\newblock PhD thesis, University of Warwick, 1976.

\bibitem{maclane1994sheaves}
S.~Mac{}Lane and I.~Moerdijk.
\newblock {\em Sheaves in Geometry and Logic}.
\newblock Springer, 1994.

\bibitem{mccartan1968SeparationAxiomsTopological}
S.~D. McCartan.
\newblock Separation axioms for topological ordered spaces.
\newblock {\em Mathematical Proceedings of the Cambridge Philosophical
  Society}, 64(4):965--973, 1968.

\bibitem{minguzzi2019LorentzianCausalityTheory}
E.~Minguzzi.
\newblock Lorentzian causality theory.
\newblock {\em Living Reviews in Relativity}, 22(1):3, 2019.

\bibitem{nachbin}
L.~Nachbin.
\newblock {\em Topology and order}.
\newblock Van Nostrand, 1965.

\bibitem{panangaden2014CausalityPhysicsComputation}
P.~Panangaden.
\newblock Causality in physics and computation.
\newblock {\em Theoretical Computer Science}, 546:10--16, 2014.

\bibitem{picado2012frames}
J.~Picado and A.~Pultr.
\newblock {\em Frames and Locales}.
\newblock Springer, 2012.

\bibitem{priestley:lattices}
H.~A. Priestley.
\newblock Ordered topological spaces and the representation of distributive
  lattices.
\newblock {\em Proceedings of the London Mathematical Society},
  s3-24(3):507--530, 1972.

\bibitem{richmond2020GeneralTopologyIntroduction}
T.~Richmond.
\newblock {\em General topology}.
\newblock {De Gruyter}, 2020.

\bibitem{schauerte}
A.~Schauerte.
\newblock {\em Biframes}.
\newblock PhD thesis, McMaster University, 1992.

\bibitem{stone1936TheoryRepresentationBoolean}
M.~H. Stone.
\newblock The {{Theory}} of {{Representation}} for {{Boolean Algebras}}.
\newblock {\em Transactions of the American Mathematical Society},
  40(1):37--111, 1936.

\bibitem{tholen2009OrderedTopologicalStructures}
W.~Tholen.
\newblock Ordered topological structures.
\newblock {\em Topology and its Applications}, 156(12):2148--2157, 2009.

\bibitem{townsend1996preframeTechniquesLocales}
C.~Townsend.
\newblock {\em Preframe Techniques in Constructive Locale Theory}.
\newblock PhD thesis, University of London, 1996.

\bibitem{townsend1997LocalicPriestleyDuality}
C.~Townsend.
\newblock Localic {{Priestley}} duality.
\newblock {\em Journal of Pure and Applied Algebra}, 116(1):323--335, March
  1997.

\bibitem{vickers1996topology}
S.~Vickers.
\newblock {\em Topology via Logic}.
\newblock {Cambridge University Press}, 1996.

\bibitem{ward:orderedspaces}
L.~E. {Ward, Jr.}
\newblock Partially ordered topological spaces.
\newblock {\em Proceedings of the American Mathematical Society},
  5(1):144--161, 1954.

\end{thebibliography}

\end{document}